\definecolor{labelkey}{rgb}{0,0,1}
\definecolor{Red}{rgb}{0.7,0,0.1}
\definecolor{Green}{rgb}{0,0.7,0}
\numberwithin{equation}{section}
\def\namedlabel#1#2{\begingroup
    #2%
    \def\@currentlabel{#2}%
    \phantomsection\label{#1}\endgroup
}
\newtheorem{Thm}{Theorem}[section]
\newtheorem{Lem}[Thm]{Lemma}
\newtheorem{Prop}[Thm]{Proposition}
\newtheorem{Cor}[Thm]{Corollary}
\newtheorem{Def}[Thm]{Definition}
\newtheorem{Rmk}[Thm]{Remark}
\newtheorem*{Thm*}{Theorem}
\newcommand{\N}{\mathbb{N}}
\newcommand{\Z}{\mathbb{Z}}
\newcommand{\R}{\mathbb{R}}
\newcommand{\T}{\mathbb{T}}
\newcommand{\B}{\mathcal{B}}
\newcommand{\NN}{\mathcal{N}}
\newcommand{\PP}{\mathcal{P}}
\newcommand{\QQ}{\mathcal{Q}}
\newcommand{\II}{\mathcal{I}}
\newcommand{\bk}{\mathbf{k}}
\DeclareMathOperator{\supp}{supp}
\DeclareMathOperator{\diam}{diam}
\newcommand{\h}[1]{\hat{#1}}
\newcommand{\til}[1]{{\tilde{#1}}}
\newcommand{\Sob}[2]{\lVert#1\rVert_{#2}}
\newcommand{\goesto}{\rightarrow}
\newcommand{\smod}{\setminus}
\newcommand{\al}{\alpha}
\newcommand{\be}{\beta}
\newcommand{\de}{\delta}
\newcommand{\De}{\Delta}
\newcommand{\gam}{\gamma}
\newcommand{\Gam}{\Gamma}
\newcommand{\eps}{\epsilon}
\newcommand{\veps}{\varepsilon}
\newcommand{\s}{\sigma}
\newcommand{\lam}{\lambda}
\newcommand{\kap}{\kappa}
\newcommand{\tht}{\theta}
\newcommand{\Tht}{\Theta}
\newcommand{\Om}{\Omega}
\newcommand{\bdy}{\partial}
\newcommand{\lb}{\langle}
\newcommand{\rb}{\rangle}
\begin{document}

\title[Mesh-free interpolant observables for data assimilation]{Mesh-free interpolant observables for continuous data assimilation}

\author{Animikh Biswas, Kenneth R. Brown, Vincent R. Martinez$^\dagger$}

\thanks{emails: abiswas@umbc.edu, kbr@ucdavis.edu, vrmartinez@hunter.cuny.edu$^\dagger$ \textit{corresponding author}}
\thanks{First published in Annals of Applied Mathematics in 2022, published by Global Science press}



\maketitle

\begin{abstract}
{This paper is dedicated to the expansion of the framework of general interpolant observables introduced by Azouani, Olson, and Titi for continuous data assimilation of nonlinear partial differential equations. The main feature of this expanded framework is its mesh-free aspect, which allows the observational data itself to dictate the subdivision of the domain via partition of unity in the spirit of the so-called Partition of Unity Method by Babuska and Melenk. As an application of this framework, we consider a nudging-based scheme for data assimilation applied to the context of the two-dimensional Navier-Stokes equations as a paradigmatic example and establish convergence to the reference solution in all higher-order Sobolev topologies in a periodic, mean-free setting. The convergence analysis also makes use of absorbing ball bounds in higher-order Sobolev norms, for which explicit bounds appear to be available in the literature only up to $H^2$; such bounds are additionally proved for all integer levels of Sobolev regularity above $H^2$.}
\end{abstract}

{\noindent \small {\it {\bf Keywords: continuous data assimilation, nudging, 2D Navier-Stokes equations, general interpolant observables, synchronization, higher-order convergence, partition of unity, mesh-free, Azounai-Olson-Titi algorithm}
  } \\
  {\it {\bf MSC 2010 Classifications:} 	35B45, 35Q30, 37L30, 65D05, 76D05, 76D55, 93C20, 93D15}
  }

\section{Introduction}\label{sect:intro}

In recent years, several efforts have been made to develop a first-principles understanding of Data Assimilation (DA), where the underlying model dynamics are given by partial differential equations (PDEs) \cite{OlsonTiti2003, HaydenOlsonTiti2011, BlomkerLawStuartZygalakis2013, AzouaniOlsonTiti2014, CelikOlsonTiti2019, Sanz-AlonsoStuart2015, BessaihOlsonTiti2015, BranickiMajdaLaw2018, MondainiTiti18, IbdahMondainiTiti2019, BiswasFoiasMondainiTiti18}, as well to provide rigorous analytical and computational justification for its application and support for common practices therein, especially in the context of numerical weather prediction \cite{AltafTitiGebraelKnioZhaoMcCabeHoteit15, FarhatJollyTiti2015, MarkowichTitiTrabelsi2016, AlbanezLopesTiti2016, FoiasMondainiTiti16, FarhatLunasinTiti2016a, FarhatLunasinTiti2016b, FarhatLunasinTiti2016c, GeshoOlsonTiti2016, FarhatLunasinTiti2017, FarhatGlatt-HoltzMartinez2020, JollyMartinezTiti2017, LunasinTiti2017, JollyMartinezOlsonTiti2019, FarhatJohnstonJollyTiti18, LariosRebholzZerfas2019, HudsonJolly2019, LariosPei2020, BiswasBradshawJolly2020, BuzzicottiBonaccorsoDiLeoniBiferale2021}. A common representative model in these studies is the forced, two-dimensional (2D) Navier-Stokes equations (NSE) of an incompressible fluid, which contains the difficulty of high-dimensionality by virtue of being an infinite-dimensional, chaotic dynamical system, but whose long-time dynamics is nevertheless finite-dimensional, manifested, for instance, in the existence of a finite-dimensional global attractor  \cite{ConstantinFoias1988, FoiasManleyRosaTemam2001, TemamBook1997}. Given a domain $\Om\subset\R^2$, the 2D NSE is given by
    \begin{align}\label{eq:nse}
        \bdy_tu+(u\cdotp\nabla)u=-\nabla p+\nu\De u+f,\quad \nabla\cdotp u=0,
    \end{align}
supplemented with appropriate boundary conditions, where $u$ represents the velocity vector field, $\nu$ denotes the kinematic viscosity, $f$ is a time-independent, external driving force, $p$ represents the scalar pressure field. The underlying ideas in the works above, though originally motivated in large part by the classical problem of DA, that is, of reconstructing the underlying reference signal, has since been extended to the problem of parameter estimation; we refer the readers to the recent works \cite{CarlsonHudsonLarios2020, CarlsonHudsonLariosMartinezNgWhitehead2022, Martinez2022} for this novel application.

Central to the investigations of this paper is a certain algorithm for DA which synchronizes the approximating signal produced by the algorithm with the true signal corresponding to the observations. The algorithm of interest in this paper is a nudging-based scheme in which observational data of the signal is appropriately extended to the phase space of the system representing the truth, \eqref{eq:nse}. The interpolated data is then inserted into the system as an exogeneous term and is subsequently balanced through a feedback control term that serves to drive the approximating signal towards the observations. In particular, we consider the approximating signal to be given as a solution to the system  
    \begin{align}\label{eq:nse:ng}
        \bdy_tv+(v\cdotp\nabla)v=-\nabla q+\nu\De v+f-\mu (I_hv-I_hu),\quad \nabla\cdotp v=0,
    \end{align}
where $u$ represents a solution of \eqref{eq:nse} whose initialization is unknown, $I_hu$ represents observed values of the signal $u$, appropriately interpolated to belong the same phase space of solutions to \eqref{eq:nse}, $h$ quantifies the spatial density of the observations, and $\mu$ is a tuning parameter, often referred to as the ``nudging" parameter. The algorithm then consists of initializing \eqref{eq:nse:ng} arbitrarily and integrating it forward. The remarkable property of \eqref{eq:nse:ng} is that although the feedback control $-\mu(I_hv-I_hu)$ only enforces synchronization towards the observations, full synchronization of the signals $v$ and $u$ is nevertheless asymptotically ensured. Indeed, this property is conferred through a nonlinear mechanism, referred to as a Foias-Prodi property of determining values in the context of the 2D NSE, that is inherent to the system itself \cite{FoiasProdi1967, JonesTiti1992a, JonesTiti1992b, FoiasTemam1984, CockburnJonesTiti97}; this mechanism asymptotically enslaves the small scale features of the solution to its large scale features in the sense that knowledge of the asymptotic convergence of the large scale features of the difference of two solutions automatically imply asymptotic convergence of its small scale features as well. Morally speaking, any system which possess this property ``asymptotic enslavement" of small scales to large scales would guarantee the success of the nudging-based algorithm.

The ``nudging algorithm" was originally introduced by Hoke and Anthes in \cite{hoke1976initialization} in 1976, for finite-dimensional systems of ordinary differential equations. In a seminal paper of Azouani, Olson, and Titi \cite{AzouaniOlsonTiti2014}, this nudging scheme was appropriately extended to the case of partial differential equations via the introduction of the ``interpolant observable operator," denoted by $I_h$ above. There, it was shown that for $\mu,h>0$ chosen appropriately, that $v$ and $u$ asymptotically synchronize in the topology of $H^1(\Om)$, that is, in the topology of square-integrable functions with square-integrable spatial derivatives. On the other hand, it was observed in the computational work of Gesho, Olson, and Titi \cite{GeshoOlsonTiti2016} the convergence, in fact, appeared to be occurring in stronger topologies, for instance in the uniform topology of $L^\infty(\Om)$. This phenomenon was analytically confirmed in
\cite{BiswasMartinez2017} in the setting of periodic boundary conditions, where the observational data was given in the form of Fourier modes. In this setting, it was furthermore shown that synchronization occurs in a far stronger topology, that of the \textit{analytic} Gevrey topology, which is characterized by a norm in which Fourier modes are exponentially weighted in wave-number, provided that sufficiently many Fourier modes are observed. A distinguished property of this norm is that its finiteness identifies a length scale below which the function experiences an exponential cut-off in wave-number, and thus, can be reasonably ignored by numerical computation. In the context of turbulent flows, this length scale is known as the \textit{dissipation length scale} and is directly related to the radius of spatial analyticity of the corresponding flow \cite{FoiasTemam1989, Kukavica1998, FoiasManleyRosaTemam2001, BiswasJollyMartinezTiti2014}. Hence, the result in \cite{BiswasMartinez2017} rigorously established that the nudging-based algorithm synchronizes the corresponding signals all the way down to this length scale.

The case of other forms of observations, e.g., volume element, nodal values, etc., was not, however, treated in \cite{BiswasMartinez2017}. One of the central motivations of this paper is to therefore address these remaining cases. In order to do so, we develop a modest, general analytical framework in the spirit of \cite{AzouaniOlsonTiti2014} that ultimately allows one to demonstrate higher-order synchronization for the nudging-based algorithm, namely, beyond the $H^1$--topology, and in particular, \textit{any} $L^2$--based Sobolev topology. This framework accommodates a significantly richer class of interpolant observable operators based on the notion of a \textit{local} interpolant observable operator, which effectively allows one to use \textit{any} mode of observation within \textit{any} local region of the domain. These local interpolants are then made global by introducing a smooth partition of unity that allows one to patch the various observations across the domain and interpolate them appropriately into the phase space of the system. Although partitions of unity were already considered in several previous works for the nudging-based algorithm \cite{AzouaniOlsonTiti2014, BessaihOlsonTiti2015, JollyMartinezTiti2017, JollyMartinezOlsonTiti2019}, the partitions of unity used there were fixed and explicit, while in this work, we directly introduce the partition of unity as an additional parameter. Indeed, the most attractive feature allowed by the framework developed here is that it liberates the observations from the situation conceived in \cite{AzouaniOlsonTiti2014} of being constrained by a given distribution of measurement devices across the domain. Moreover, the possibility of having different spatial densities of measurements across the domain is also accommodated by this framework. This, of course, corresponds to the situation where more spatial measurements are simply available in one region of the domain compared to others. We note that this construction is akin to the ``Partition Finite Element Method" introduced by Babuska and Melenk \cite{BabuskaMelenk1997}, where finite element spaces were generalized to be ``mesh-free" in an analogous way via partition of unity, thus imbuing them with a greater flexibility.  We also refer the reader to the recent results \cite{BiswasBradshawJolly2020} and \cite{JollyPakzad2021}. In the former work, the efficacy of the nudging-based algorithm in the situation of having observations available \textit{only} in a fixed subdomain is assessed. The latter work studies higher-order interpolation using finite-element interpolants over bounded domains and the solution produced by the subsequent nudging-based algorithm is compared to solutions obtained by direct-numerical simulation from a semi-discrete scheme.

In \cref{sect:prelim}, we introduce the functional setting in which we will work throughout the paper. Note that we will work \textit{exclusively} in the periodic setting; the case of other boundary conditions will be treated in a future work. In \cref{sect:GIOs}, we introduce the notion of ``local interpolant observable operators" and construct a ``globalization" of them via partition of unity. Their approximation properties are subsequently developed and several nontrivial examples are provided (see \cref{sect:examples}). We point out that due to the amount of flexibility allowed by this construction, a significant portion of this work is dedicated to organizing its salient properties and ultimately identifying the combinations of interpolating operators that ultimately ensure well-posedness of the nudging-based algorithm and the important synchronization property described above. Rigorous statements of the main results of the paper are then provided in \cref{sect:statements} followed by several remarks. In order to clarify the detailed relation between the structure of the interpolant operators and the system, we introduce hyperdissipation into the system. Of course, all of our results contain the original, non-hyperdissipative case. In fact, a \textit{key feature} of the results is that synchronization in higher-order Sobolev spaces can be guaranteed under essentially the same assumptions on $\mu,h$ as were made in \cite{AzouaniOlsonTiti2014}, i.e., the assumptions exhibit the same scaling in $\mu,h$. In \cref{sect:statements}, we further identify alternative structural assumptions one can make on the interpolation operators that allow one to consider different families of operators that ultimately lead to the synchronization property (see \cref{def:partition:mult}). The proofs of the main statements are provided in \cref{sect:proofs}. We point out that in order to properly quantify the assumptions on $\mu, h$ required by the analysis to guarantee higher-order convergence, it is crucial to identify absorbing ball estimates with respect to the corresponding higher-order norms. This is captured in \cref{thm:abs:ball:Hk}, which properly generalizes the bounds obtained in \cite{DascaliucFoiasJolly2005} for the radius of the absorbing ball of \eqref{eq:nse} with respect to the $H^2$--topology. Finally, various technical details related to well-posedness (see \cref{sect:app:wp:ng}) or regarding the various aforementioned examples introduced in \cref{sect:examples} (see \cref{sect:app:taylor} and \cref{sect:app:vol}) are relegated to the appendices.

\section{{Mathematical Preliminaries}}\label{sect:prelim}

The functional setting throughout this paper will be the space of periodic, mean-free, divergence-free functions over $\T^2=[0,2\pi]^2$. More precisely, let ${B}_{per}(\T^2)$ denote the Borel measureable functions over $\T^2$, which are $2\pi$-periodic a.e. in each direction $x,y$. We define the space of $2\pi$-periodic, square-integrable functions over $\T^2$ by
    \begin{align}\label{def:L2}
        L^2(\T^2):=\{\phi\in{B}_{per}(\T^2): \Sob{\phi}{L^2}<\infty\}, \quad \Sob{\phi}{L^2}^2:=\int_{\T^2}|\phi(x)|^2dx.
    \end{align}
For each $k>0$, we define the inhomogeneous Sobolev space, ${H}^k(\T^2)$ by
    \begin{align}\label{def:Hk}
        {H}^k(\T^2):=\{\phi\in L^2(\T^2):\Sob{\phi}{H^k}<\infty\},\quad \Sob{\phi}{H^k}^2:=\sum_{|\al|\leq k}\Sob{\bdy^\al \phi}{L^2}^2.
    \end{align}
The homogeneous Sobolev space is defined as
    \begin{align}\label{def:dotHk}
        \dot{H}^k(\T^2):=\{\phi\in L^2(\T^2):\Sob{\phi}{\dot{H}^k}<\infty\},\quad \Sob{\phi}{\dot{H}^k}^2:=\sum_{|\al|=k}\Sob{\bdy^\al \phi}{L^2}^2.
    \end{align}
By the Poincar\'e inequality, the topologies induced by \eqref{def:Hk} and \eqref{def:dotHk} are equivalent. In particular, we have
    \begin{align}\label{eq:Hk:equiv}
        c^{-1}\Sob{\phi}{H^k}\leq \Sob{\phi}{\dot{H}^k}\leq\Sob{\phi}{H^k},
    \end{align}
for some universal constant $c>0$. Also observe that when $k=0$, we have $L^2(\T^2)=H^0(\T^2)=\dot{H}^0(\T^2)$. Lastly, let us recall the elementary fact that each element in the homogeneous Sobolev space can be identified with a mean-free function belonging to the inhomogeneous Sobolev space (see \cite{BenyiOh2013}). We will henceforth assume that each element of $\dot{H}^k(\T^2)$ is mean-free over $\T^2$.

We additionally incorporate the divergence-free condition by defining, for each $k\geq0$, the solenoidal Sobolev spaces. Note that due to \eqref{eq:Hk:equiv}, it will suffice to consider only the homogeneous counterpart. In particular, let us define
    \begin{align}\label{def:dotHk:sol}
        \dot{H}_\s^k(\T^2)^2&:=\{v\in \dot{H}^k(\T)^2:\nabla\cdotp v=0\}.
    \end{align}
Throughout the paper, we will often drop the notation, $\T^2$, for the domain when referring to the spaces $L^2(\T^2)$, $H^k(\T^2)$, or $\dot{H}^k(\T^2)$. Also, since the solenoidal distinction, $\s$, always refers to planar vector fields, we will simply write $\dot{H}_\s^k$ instead of $(\dot{H}_\s^k)^2$.
    
To analytically study \eqref{eq:nse}, it is customary to project \eqref{eq:nse} onto divergence-free vector fields and consider $\dot{H}_\s^k$ as the phase space of the resulting system. To do so, we introduce the Leray projection, $P_\s:(L^2)^2\goesto (L^2_\s)^2$, where $(L^2_\s)^2$ denotes the space of $L^2$--vector fields, $v$, such that $\nabla\cdotp v=0$ in the distributional sense, through its Fourier transform by
    \begin{align}\label{def:leray}
        \widehat{P_\s v}(k)=\sum_{j=1}^2\left[\frac{1}2\hat{ v}(k)-\frac{k}{|k|^2}k_j\hat{v}^j(k)\right],\quad k\in\Z^2\smod\{0\},
    \end{align}
and $\widehat{P_\s}v(0)=0$. We will consider a ``hyperdissipative" perturbation of \eqref{eq:nse}, which, for $p\geq0$ and $\gam>0$, is given by
    \begin{align}\label{eq:nse:proj:hyper}
        \bdy_tu-\nu\De u+\gam(-\De)^{p+1}u+P_\s(u\cdotp\nabla) u=P_\s f,\quad P_\s u=u,
    \end{align}
where $(-\De)^q$ denotes the operator defined by $\widehat{(-\De)^q\phi}(k)=|k|^{2q}\hat{\phi}(k)$, whenever $k\notin\Z^2$ and $q\geq0$. {We point out that while this modification of Navier-Stokes is not physical, it is common practice to consider $\gam,p>0$ in order to help stabilize numerical simulations. We consider this form of the dissipation in order to highlight the role of the dissipation in establishing the synchronization property of the nudging-based scheme at higher levels of Sobolev regularity.} The corresponding nudged system is then given by
    \begin{align}\label{eq:nse:ng:proj:hyper}
        \bdy_tv-\nu\De v+\gam(-\De)^{p+1}v+P_\s(v\cdotp\nabla)v=P_\s f-\mu P_\s I_h(v-u),\quad P_\s v=v.
    \end{align}
Given a solution $u$ of \eqref{eq:nse:proj:hyper} or solution $v$ of \eqref{eq:nse:ng:proj:hyper}, the pressure field may then be reconstructed up to an additive constant  \cite{ConstantinFoias1988, Temam2001}. For the remainder of the manuscript, we will consider the study of the coupled system \eqref{eq:nse:proj:hyper}, \eqref{eq:nse:ng:proj:hyper}. Note that, as with the Sobolev spaces, we will also abuse notation by writing $(L^2_\s)^2$ simply as $L^2_\s$.

The global well-posedness of \eqref{eq:nse:proj:hyper} in $\dot{H}^k$ and the existence of an absorbing ball in the corresponding topology are classical results and can be found, for instance, in \cite{ConstantinFoias1988, FoiasManleyRosaTemam2001, Temam2001}. When $k=2$, the sharpest estimate for the radius of the absorbing ball is established in  \cite[Theorem 3.1]{DascaliucFoiasJolly2005}. 
To state them, let us also recall the Grashof number, $G$, corresponding to a given time-independent external forcing, $f$, which is defined by
    \begin{align}\label{def:grashof}
        G:=\frac{\Sob{P_\s f}{L^2}}{\nu^2{\lam_1}},
    \end{align}
{where $\lam_1$ is the smallest eigenvalue of $-\De$. Since the side-length of the spatial domain has been normalized to have length $2\pi$, we see that $\lam_1=1$. In particular, $G$ is a non-dimensional quantity.}
Let us also define the following shape factors of the forcing. For $k\geq0$, We define the $k$--th order shape function of $f$ by
    \begin{align}\label{def:shape}
        \s_k:=\frac{\Sob{P_\s f}{\dot{H}^k}}{\Sob{P_\s f}{L^2}}.
    \end{align}
Observe that $\s_k\geq1$.

\begin{Prop}\label{prop:wp:nse}
Let $\gam,p\geq0$. Given $k\geq1$, let $f\in \dot{H}^{k-1}_\s$ and $u_0\in\dot{H}^k_\s$. There exists a unique $u$ satisfying  \eqref{eq:nse:proj:hyper} such that for all $T>0$, $u\in C([0,T];\dot{H}^k_{\s})\cap L^2(0,T;\dot{H}^{k+1})$ and $\frac{du}{dt}\in L^2(0,T;\dot{H}^{k-1}_\s)$. Moreover, there exists $t_0=t_0(u_0,f)$ such that 
    \begin{align}\label{est:energy:enstropy}
        \frac{\Sob{u(t)}{H^1}}{\nu}\leq 
        2G,
    \end{align}
for all $t\geq t_0$. Moreover, if $k\geq2$, then
   \begin{align}\label{est:palinstrophy}
       \frac{\Sob{\De u(t)}{L^2}}{\nu}\leq c_2(\s_1^{1/2}+G)G,
    \end{align}
for some universal constant $c_2>0$.
\end{Prop}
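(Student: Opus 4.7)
The plan is to follow the classical Galerkin scheme combined with the enstrophy/palinstrophy energy identities, adapted to account for the hyperdissipative perturbation $\gamma(-\Delta)^{p+1}u$. For existence and uniqueness, I would project \eqref{eq:nse:proj:hyper} onto the finite-dimensional space spanned by Fourier modes $|k|\leq N$ and establish uniform a priori estimates in $C([0,T];\dot{H}^k_\sigma)\cap L^2(0,T;\dot{H}^{k+1})$, passing to the limit via standard compactness arguments (Aubin--Lions). Uniqueness follows by taking the difference of two solutions, testing against it in $\dot{H}^k_\sigma$, and invoking a 2D Ladyzhenskaya/Agmon-type inequality together with Grönwall; the hyperdissipative term contributes a non-negative quantity on the left-hand side, so the classical 2D argument goes through verbatim. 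The regularity of $du/dt$ in $L^2(0,T;\dot{H}^{k-1}_\sigma)$ is then obtained by comparison directly from the equation.

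For the $H^1$ absorbing ball \eqref{est:energy:enstropy}, I would first derive the basic energy identity by testing with $u$ and then the enstrophy identity by testing with $-\Delta u$. The critical observation is that in the 2D periodic mean-free setting, the trilinear term $\langle (u\cdot\nabla)u, -\Delta u\rangle$ vanishes identically, so that
\begin{equation*}
    \tfrac{1}{2}\tfrac{d}{dt}\Sob{\nabla u}{L^2}^2 + \nu\Sob{\Delta u}{L^2}^2 + \gamma\Sob{(-\De)^{(p+2)/2}u}{L^2}^2 = \lb P_\s f,-\De u\rb.
\end{equation*}
Applying Cauchy--Schwarz and Young's inequality to absorb a quarter of the dissipation, then using the Poincaré inequality $\Sob{\Delta u}{L^2}^2\geq\lam_1\Sob{\nabla u}{L^2}^2=\Sob{\nabla u}{L^2}^2$, one obtains a differential inequality of the form $\tfrac{d}{dt}\Sob{\nabla u}{L^2}^2+\nu\Sob{\nabla u}{L^2}^2\leq \nu^{-1}\Sob{P_\s f}{L^2}^2$. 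An elementary integrating-factor argument then yields $\Sob{\nabla u(t)}{L^2}\leq \sqrt{2}\,\nu G$ for $t\geq t_0$, and the Poincaré inequality converts this into the stated $H^1$ bound with constant at most $2$.

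The main obstacle, as expected, is the sharp $H^2$ bound \eqref{est:palinstrophy}, which requires more delicate control of the nonlinear term since $\langle (u\cdot\nabla)u,\Delta^2 u\rangle$ does not vanish. My approach is to work with the scalar vorticity $\omega=\nabla\times u$, which satisfies $\bdy_t\omega+(u\cdot\nabla)\omega=\nu\De\omega-\gam(-\De)^{p+1}\omega+\nabla\times f$, and test against $-\Delta\omega$. Using the 2D Agmon-type estimate $\Sob{u}{L^\infty}\lesssim \Sob{\nabla u}{L^2}^{1/2}\Sob{\Delta u}{L^2}^{1/2}$ together with the identity $\Sob{\nabla u}{L^2}=\Sob{\omega}{L^2}$ and $\Sob{\Delta u}{L^2}=\Sob{\nabla\omega}{L^2}$, I would estimate
\begin{equation*}
    |\lb (u\cdot\nabla)\omega,\De\omega\rb|\leq c\Sob{u}{L^\infty}\Sob{\nabla\omega}{L^2}\Sob{\De\omega}{L^2}\leq \tfrac{\nu}{4}\Sob{\De\omega}{L^2}^2+\tfrac{c}{\nu}\Sob{\omega}{L^2}\Sob{\nabla\omega}{L^2}^3,
\end{equation*}
and similarly dispatch the forcing term. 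This yields a differential inequality for $\Sob{\nabla\omega}{L^2}^2=\Sob{\De u}{L^2}^2$ whose nonlinear feedback is controlled because $\Sob{\omega(t)}{L^2}^2\leq 2\nu^2 G^2$ is already under control by the previous step. To close the argument and obtain the sharp dependence $(\sigma_1^{1/2}+G)G$ rather than a naive polynomial in $G$, I would follow the Dascaliuc--Foias--Jolly strategy from \cite{DascaliucFoiasJolly2005}: use the identity for the time average of $\Sob{\De u}{L^2}^2$ coming from the enstrophy balance to get a uniform-in-$t$ bound rather than one that grows with initial data, then combine with a uniform Grönwall lemma applied to the palinstrophy inequality to eliminate the transient. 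The hyperdissipative correction only strengthens the left-hand side of each identity, so it never creates an obstruction; the combinatorial/interpolation bookkeeping in adapting the DFJ computation to track the $\sigma_1^{1/2}$ factor (which comes from estimating $\Sob{\nabla\times P_\s f}{L^2}$) is the most delicate accounting.
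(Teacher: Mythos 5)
Your treatment of existence, uniqueness, the time-derivative regularity, and the $H^1$ absorbing ball \eqref{est:energy:enstropy} is the standard one and is sound; the paper does not actually prove these parts of \cref{prop:wp:nse} but cites the classical references, and you are right that the hyperdissipative term only contributes a nonnegative quantity to each balance and so never obstructs the 2D arguments.

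The genuine gap is in your closing step for \eqref{est:palinstrophy}. The device you describe --- bounding the time average of $\Sob{\De u}{L^2}^2$ from the enstrophy balance and then applying the uniform Gr\"onwall lemma to the palinstrophy inequality --- is the \emph{classical} argument, not the Dascaliuc--Foias--Jolly one, and it does not produce $c\nu(\s_1^{1/2}+G)G$. With your estimate $|\lb(u\cdot\nabla)\w,\De\w\rb|\leq\tfrac{\nu}{4}\Sob{\De\w}{L^2}^2+\tfrac{c}{\nu}\Sob{\w}{L^2}\Sob{\nabla\w}{L^2}^3$, uniform Gr\"onwall applied to $y=\Sob{\nabla\w}{L^2}^2$ with $g=\tfrac{c}{\nu}\Sob{\w}{L^2}\Sob{\nabla\w}{L^2}$ yields a multiplicative factor
\begin{equation*}
\exp\left(\int_t^{t+1}g\,ds\right)\leq \exp\left(\tfrac{c}{\nu}\sup_s\Sob{\w(s)}{L^2}\left(\int_t^{t+1}\Sob{\nabla\w}{L^2}^2ds\right)^{1/2}\right)=\exp\left(cG^2(\cdots)\right),
\end{equation*}
i.e.\ a bound that is exponential in $G^2$, not polynomial. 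The actual DFJ argument, which the paper reproduces in generalized form in the proof of \cref{thm:abs:ball:Hk}, uses no Gr\"onwall at all: restrict to the global attractor, take the complete bounded trajectory through the element $u_0$ maximizing $\Sob{\cdot}{\dot{H}^2}$ over the attractor, and note that $\frac{d}{dt}\Sob{u(t)}{\dot{H}^2}^2\big|_{t=0}=0$ since $t=0$ is an interior maximum; the palinstrophy balance then degenerates to the algebraic identity $\nu\Sob{\nabla u_0}{\dot{H}^2}^2=(\text{nonlinear})+(\text{forcing})$, which closes via the interpolation $\Sob{u_0}{\dot{H}^2}\leq c\Sob{\nabla u_0}{\dot{H}^2}^{1/2}\Sob{\nabla u_0}{L^2}^{1/2}$, the already-established $\dot{H}^1$ bound, and Young's inequality; the statement for general data then follows from attraction to the attractor. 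Without this pointwise-in-time device (or an equivalent one), your outline stalls at an exponential-in-$G$ bound and does not prove \eqref{est:palinstrophy}.
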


Lastly, let us recall the result proved in \cite{AzouaniOlsonTiti2014}, where synchronization in the $\dot{H}^1$--topology is shown for general interpolant observable operators, $I_h$, satisfying certain boundedness and approximation properties. For this, let us denote the $\dot{H}^1$--absorbing ball for \eqref{eq:nse:proj:hyper} by
    \begin{align}\label{def:abs:ball:H1}
        \B_1=\left\{v\in\dot{H}^1_\s:\Sob{v}{\dot{H}^1}\leq \sqrt{2}G\right\}.
    \end{align}
Moreover, assume that $I_h$ is finite-rank, linear, and satisfies either
    \begin{align}\label{def:Type1:og}
        \Sob{\phi-I_h\phi}{L^2}^2\leq c_1h^2\Sob{\phi}{\dot{H}^1}^2+c_2h^4\Sob{\phi}{\dot{H}^2}^2,
    \end{align}
or
    \begin{align}\label{def:Type2:og}
        \Sob{\phi-I_h\phi}{L^2}\leq c\Sob{\phi}{\dot{H}^1}.
    \end{align}
Although it was only proved for the unperturbed case, $\gam=0$, i.e., without hyperviscosity, we point out that the analysis of \cite{AzouaniOlsonTiti2014} still applies to the $\gam\neq0$ case without any difficulty whatsoever.

\begin{Thm}\label{thm:main:AOT}
Given $\gam,p\geq0$, $f\in L^2$, and $u_0\in\B_1$, let $u$ denote the unique solution corresponding $u_0, f$ guaranteed by \cref{prop:wp:nse}. Given $v_0\in\dot{H}^1_\s$, there exists a unique, $v$, satisfying \eqref{eq:nse:ng:proj:hyper} such that for all $T>0$, $v\in C([0,T];\dot{H}^1_\s)\cap L^2(0,T;\dot{H}^{2})\cap L^2(0,T;\dot{H}^{2+p})$ and $\bdy_tv\in L^2(0,T;L^2)$ provided that $\mu, h$ satisfy
    \begin{align}\label{cond:mu:wp:H1}
        c_0\frac{\mu h^2}{\nu}\leq 1,
    \end{align}
for some universal constant $c_0>0$. Moreover, one has
    \begin{align}\label{eq:sync:H1}
        \Sob{v(t)-u(t)}{\dot{H}^1}\leq e^{-(\mu/2)t}\Sob{v_0-u_0}{\dot{H}^1},
    \end{align}
provided that $\mu$ additionally satisfies
    \begin{align}\label{cond:mu:sync:H1}
        \mu\geq c_0'\nu(1+\log(1+G))G,
    \end{align}
for some universal constant $c_0'>0$.
\end{Thm}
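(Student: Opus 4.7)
The plan is to derive the key a priori $\dot{H}^1$ energy estimate for the error $w := v-u$, since this same estimate drives both the well-posedness of \eqref{eq:nse:ng:proj:hyper} and the synchronization bound \eqref{eq:sync:H1}. Well-posedness would be established by a standard Galerkin approximation scheme. Compared to the classical 2D NSE theory, the only new feature is the nudging feedback $-\mu P_\s I_h v$; because $I_h$ is linear, finite-rank, and satisfies \eqref{def:Type1:og} or \eqref{def:Type2:og}, the decomposition $I_h v = v - (v-I_h v)$ produces a stabilizing term $\mu\|v\|_{L^2}^2$ on the left and a remainder that is absorbed into a small fraction of $\nu\|v\|_{\dot{H}^2}^2$ by Young's inequality under \eqref{cond:mu:wp:H1}. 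The hyperdissipation contributes only an extra positive gain $\gamma\|v\|_{\dot{H}^{p+2}}^2$ at the a priori level, and passage to the limit in the Galerkin sequence then follows from standard weak compactness. The hypothesis $u_0\in\B_1$ ensures $u(t)\in\B_1$ for all $t\geq 0$, giving access to the absorbing ball bounds \eqref{est:energy:enstropy} and \eqref{est:palinstrophy} uniformly in time.

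For the synchronization estimate, I would subtract \eqref{eq:nse:proj:hyper} from \eqref{eq:nse:ng:proj:hyper} and decompose the nonlinear difference as $(v\cdot\nabla)v-(u\cdot\nabla)u = (v\cdot\nabla)w + (w\cdot\nabla)u$, so that
\begin{align*}
\partial_t w - \nu\Delta w + \gamma(-\Delta)^{p+1}w + P_\s[(v\cdot\nabla)w + (w\cdot\nabla)u] = -\mu P_\s I_h w.
\end{align*}
Pairing with $-\Delta w$ in $L^2$ and rewriting $I_h w = w - (w-I_h w)$ generates the nudging gain $\mu\|w\|_{\dot{H}^1}^2$ on the left and the remainder $\mu\langle w-I_h w, -\Delta w\rangle$ on the right. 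Combining Cauchy--Schwarz, \eqref{def:Type1:og} (the case \eqref{def:Type2:og} is analogous), and Young's inequality bounds this remainder by $\tfrac{\mu}{4}\|w\|_{\dot{H}^1}^2 + \tfrac{\nu}{4}\|w\|_{\dot{H}^2}^2$ as soon as $c_0\mu h^2/\nu$ is chosen sufficiently small, which is exactly \eqref{cond:mu:wp:H1}.

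The main obstacle is controlling the nonlinearity $\langle (v\cdot\nabla)w + (w\cdot\nabla)u, -\Delta w\rangle$ so that the resulting Gronwall coefficient matches the scaling in \eqref{cond:mu:sync:H1}. The term $\langle (v\cdot\nabla)w, -\Delta w\rangle$ reduces, after integration by parts using $\nabla\cdot v=0$, to a standard 2D trilinear estimate controlled by the enstrophy bound \eqref{est:energy:enstropy}. The delicate piece $\langle (w\cdot\nabla)u, -\Delta w\rangle$ requires a 2D logarithmic Sobolev inequality of Brezis--Gallou\"et type, $\|w\|_{L^\infty}\lesssim \|w\|_{\dot{H}^1}\bigl(1+\log\bigl(1+\|w\|_{\dot{H}^2}^2/\|w\|_{\dot{H}^1}^2\bigr)\bigr)^{1/2}$, combined with the palenstrophy bound \eqref{est:palinstrophy}. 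After Young's inequality, the total nonlinear contribution is dominated by $\tfrac{\nu}{4}\|w\|_{\dot{H}^2}^2 + \Phi(G)\|w\|_{\dot{H}^1}^2$ for a coefficient $\Phi(G)$ scaling as $\nu(1+\log(1+G))G$ up to absolute constants, precisely the factor appearing in \eqref{cond:mu:sync:H1}.

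Absorbing the residual $\Phi(G)\|w\|_{\dot{H}^1}^2$ into $\tfrac{\mu}{2}\|w\|_{\dot{H}^1}^2$ via \eqref{cond:mu:sync:H1} leaves the clean differential inequality $\tfrac{d}{dt}\|w\|_{\dot{H}^1}^2 + \mu\|w\|_{\dot{H}^1}^2 \leq 0$, and Gronwall's inequality immediately gives \eqref{eq:sync:H1}. The hardest point throughout is to orchestrate the logarithmic inequality so that the resulting threshold on $\mu$ has exactly the scaling stated, rather than a cruder polynomial dependence on $G$; the logarithmic correction is essentially the 2D signature of Brezis--Gallou\"et and cannot be removed by this method.
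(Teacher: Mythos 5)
The paper does not prove this theorem from scratch; it is explicitly recalled from Azouani--Olson--Titi \cite{AzouaniOlsonTiti2014}, and the in-paper analogue is the proof of \cref{lem:sync:H1}, whose mechanism you should compare against. Your broad outline (Galerkin well-posedness, $\dot{H}^1$ energy estimate for $w=v-u$, absorbing the nudging error via \eqref{def:Type1:og} and Young's inequality under \eqref{cond:mu:wp:H1}, then Gronwall) is in the right spirit, but there is a genuine gap in the handling of the nonlinearity.

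You decompose $(v\cdot\nabla)v-(u\cdot\nabla)u=(v\cdot\nabla)w+(w\cdot\nabla)u$ and propose to estimate the two resulting trilinear forms separately against $-\Delta w$. Two issues arise. First, the claim that $\lb (v\cdot\nabla)w,-\Delta w\rb$ ``reduces to a standard 2D trilinear estimate controlled by the enstrophy bound \eqref{est:energy:enstropy}'' glosses over the fact that this is a $v$--dependent quantity; the enstrophy bound applies to $u$, not to the as-yet-uncontrolled approximant $v$. To use it you must first write $v=u+w$ and observe that $\lb (w\cdot\nabla)w,\Delta w\rb=0$ in the 2D periodic setting, leaving $\lb (u\cdot\nabla)w,-\Delta w\rb$. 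Second, and more fundamentally, after this reduction your proposal estimates $\lb (u\cdot\nabla)w,-\Delta w\rb$ and $\lb (w\cdot\nabla)u,-\Delta w\rb$ one at a time, with Brezis--Gallou\"et applied to the second. The key algebraic step, which is the crux of the Azouani--Olson--Titi argument and appears explicitly in the paper's proof of \cref{lem:sync:H1}, is the 2D--periodic orthogonality identity
\begin{align*}
\lb (w\cdot\nabla)u,\Delta w\rb+\lb (u\cdot\nabla)w,\Delta w\rb+\lb (w\cdot\nabla)w,\Delta u\rb=0,
\end{align*}
obtained by polarizing $\lb (\phi\cdot\nabla)\phi,\Delta\phi\rb=0$. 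This converts the \emph{sum} of the two cross terms into the single term $-\lb (w\cdot\nabla)w,\Delta u\rb$, which is then bounded by $\|w\|_{L^\infty}\|\nabla w\|_{L^2}\|\Delta u\|_{L^2}$ and treated with Brezis--Gallou\"et and the palenstrophy bound \eqref{est:palinstrophy}. Your term-by-term approach does not reproduce this cancellation: estimating $\lb (u\cdot\nabla)w,-\Delta w\rb$ directly (after one integration by parts) produces an integrand $\partial_i u^j\,\partial_j w^k\,\partial_i w^k$ whose Young-inequality residue contributes $\nu^{-1}\|\nabla u\|_{L^2}^2\|\nabla w\|_{L^2}^2$, i.e.\ a Gronwall coefficient of order $\nu G^2$ rather than the log-corrected linear dependence asserted. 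Your remark that the delicate piece is to obtain ``exactly the scaling stated, rather than a cruder polynomial dependence on $G$'' is accurate, but the proposal does not supply the device (the orthogonality identity) that makes that possible, and without it the proof does not close at the stated threshold \eqref{cond:mu:sync:H1}.
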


In the next section, we expand upon the framework of general interpolation observable operators considered in \cite{AzouaniOlsonTiti2014} in order to accommodate approximation in higher-order Sobolev topologies. The specific examples of piecewise constant interpolation, volume element interpolation, and spectral interpolation constitute the original inspiration for the identification of properties \eqref{def:Type1:og} and \eqref{def:Type2:og}. The framework developed here introduces an additional degree of flexibility for interpolating the data that not only realizes these three examples as special cases, but generates a wealth of new examples that were not treated in \cite{AzouaniOlsonTiti2014}.

\begin{Rmk}\label{rmk:dimensions}
Note that we choose to work in the dimensionless domain, $\T^2$, {rather than $[0,L]^2$, for the sake of convenience. Because of this choice, derivatives and domains are ultimately dimensionless}. In particular, throughout the paper velocities and viscosities carry only the physical units of $(\text{time})^{-1}$. One may, of course, re-scale variables accordingly to introduce a length scale commensurate with the linear size of the spatial domain. In doing so, all physical quantities will then recover their appropriate dimensions. 
\end{Rmk}

\section{Local Interpolant Operators and Globalizability}\label{sect:GIOs}
In \cite{AzouaniOlsonTiti2014}, a general class of interpolant operators was introduced that could be used to define the nudging-based equation \eqref{eq:nse:ng:proj:hyper} and ultimately establish asymptotic convergence of its solution to the corresponding solution of \eqref{eq:nse:proj:hyper} in the topology of $L^2$ or $H^1$. One of the main contributions of the present article is to identify a very general class of interpolant operators that allows one to ensure convergence in a stronger topology. In particular, we introduce a class of interpolant operators that generalizes the class introduced in \cite{AzouaniOlsonTiti2014} in such a way that accommodates higher-order interpolants by introducing an additional layer of flexibility in their design. When a collection of them are defined locally, subordinate to some open covering of the domain, and they satisfy suitable approximation properties, the family can then be patched together to form a global interpolant; this is one of the main constructions in this paper and is very much akin to the so-called Partition of Unity Method introduced by Babuska and Melenk in \cite{BabuskaMelenk1997}. 

In what follows, we develop basic properties of this more general class of interpolating operators. Firstly, we introduce the notion of a local interpolant operator corresponding to a given subdomain of a given \textit{order} and \textit{level}. We then demonstrate how to ``globalize" the construction to the entire domain via partition of unity subordinate to a given covering by subdomains. The main difficulties that arise in doing so are due to the fact that at each subdomain, different interpolant operators can be specified, namely, ones that correspond to different orders and levels. We must therefore systematically develop terminology that distills their salient properties and ultimately allows one to differentiate among the various possibilities of the construction. Then in the local-to-global analysis, the structure of the constants associated to each local interpolation operator must be carefully tracked.

We begin by introducing the notion of a ``$Q$-local interpolation observable operator," {(I.O.O.)} where $Q$ represents a given subdomain of $\T^2$. Note that in the following definition, $H^k(Q)$ or $\dot{H}^k(Q)$ need not subsume any boundary conditions as it did in the case $Q=\T^2$ that we defined earlier; to distinguish between periodic boundary conditions, we will make use of the notation $H^k_{per}(Q)$. When $Q=\T^2$, we maintain the convention of dropping the dependence on the domain, e.g., $H^k=H^k(\T^2)$. Throughout this section, we will refer to any subset of $Q\subset\T^2$ that is bounded, open, and connected as a \textit{subdomain} of $\T^2$.

\begin{Def}\label{def:IO:loc}
Let $m\geq0$ and $k\geq m+1$ be integers. Let $Q\subset\T^2$ be a subdomain and denote $h=\diam(Q)$. We say that $I^Q$
is a \textbf{$\mathbf{\text{Q}}$--local I.O.O. of order $\mathbf{\text{m}}$ at level $\mathbf{\text{k}}$} if $I^Q$ is defined on $H^{k}(Q)$, linear, finite-rank,
and whose complement, $Id-I^Q$, for all $0\leq\ell\leq m$, satisfies 
    \begin{align}\label{eq:IO:loc:approx}
        \Sob{\phi-I^Q\phi}{\dot{H}^\ell(Q)}^2
        &\leq \sum_{j=1}^{k-\ell}\eps_{\ell,j}(I^Q)^2h^{2j}\Sob{\phi}{\dot{H}^{\ell+j}(Q)}^2,
    \end{align}
for some non-negative constants $\eps_{\ell,j}(I^Q)$. 
We will refer to the constants given by $\{\eps_{\ell,j}(I^Q)\}$ as the \textbf{constants associated to $I^Q$}. 
We say that $I^Q$ \textbf{interpolates optimally at level $k$ over $\mathbf{\text{Q}}$} if $I^Q$ is also a $Q$-local I.O.O. of order $k'-1$ at level $k'$, for all $1\leq k'\leq k$. In this case, for all $0\leq\ell\leq k'-1$, we have
    \begin{align}\label{eq:IO:loc:optimal}
        \Sob{\phi-I^Q\phi}{\dot{H}^\ell(Q)}^2
        &\leq \eps_{\ell,k'}(I^Q)^2h^{2(k'-\ell)}\Sob{\phi}{\dot{H}^{k'}(Q)}^2,
    \end{align}
for all $1\leq k'\leq k$. We say that $I^Q$ is a \textbf{$Q$--local I.O.O. of order $m$ at all levels} if \eqref{eq:IO:loc:approx} holds for all $k\geq m+1$; in this case, we also say \textbf{at level $k=\infty$}.
\end{Def}

Given a bounded, open, connected set, $Q$, with finite diameter, $h=\diam(Q)>0$, we recall \cite[Lemma 4.5.3]{BrennerScottBook} that since $Q$--local I.O.O.'s have finite rank, the following inverse inequality always holds for all such operators of order $m$ at level $k$:
    \begin{align}\label{eq:IO:loc:bdd}
        \Sob{I^Q\phi}{\dot{H}^\ell(Q)}\leq ch^{\ell'-\ell}\Sob{I^Q\phi}{\dot{H}^{\ell'}(Q)},  \quad\text{for all}\quad  0\leq\ell'\leq \ell\leq m,
    \end{align}
whenever $\phi\in H^{k}(Q)$, for some constant $c>0$, depending on $\ell, k$, but independent of $h$.

\begin{Rmk}\label{rmk:IO:loc:convention}
Observe that if $I^Q$ is an $m$--th order local I.O.O. at level $k$, then it is also a local I.O.O. of order $m$ at level $k'$, for all $k'>k$, as well as a local I.O.O. of order $m'$ at level $k$, for all $m'<m$.  Indeed, one can simply ``de-alias" the matrix induced by the associated constants by setting the additional associated constants to simply be zero. On the other hand, one can also identify a \textbf{canonical representative} for a $Q$--local I.O.O. by letting $m_0$ be the largest integer $m$ such that $\sup_j\eps_{m,j}(Q)>0$ and $k_0$ be the smallest integer $k$ such that $\sup_\ell\eps_{\ell,k'}=0$, for all $k'>k$. In this case, we may set $I^Q=I_{m_0,k_0}^Q$ without any ambiguity. It will be convenient to exploit the flexibility in the terminology later on (see \cref{lem:generic}).
\end{Rmk}

\begin{Rmk}\label{rmk:constants}
 We will always associate an I.O.O. to a subdomain $Q$. It will thus be more convenient to denote the associated constants of $I^Q$ simply by $\eps_{\ell,j}(Q)$, rather than $\eps_{\ell,j}(I^Q)$. This convention will be enforced after \cref{def:IO:global} below.
\end{Rmk}

A key object in this paper is the patching together of a family of local interpolant operators to form a global one. This is done via partition of unity. Given $k\geq2$ and a covering $\QQ=\{Q_q\}$ by subdomains $Q_q\subset\T^2$, let us fix any family of functions $\Psi=\{\psi_q\}_q\subset C^k$ satisfying
    \begin{description}
        \item[\namedlabel{item:P1}{(P1)}] for each $q$, $\psi_q|_{Q_q}=1$ and $\supp\psi_q\subset\til{Q}_q$, for all $q$, where $\til{Q}_q= {Q}_q+B(0,\de_q)$, for some ${\de_q\in(0,2\pi)}$;
        \item[\namedlabel{item:P2}{(P2)}] {there exists an integer $\pi_0>0$} such that for all $q$, {$\til{Q}_q\cap\til{Q}_{q'}\neq\varnothing$} for at most {$\pi_0$} many $q'$; 
        \item[\namedlabel{item:P3}{(P3)}] $\sum_q\psi_q(x)=1$, for all $x\in\Om$;
        \item[\namedlabel{item:P4}{(P4)}] {for all $0\leq\ell\leq k$, there exists $c_\ell>0$ such that $\sup_{|\al|=\ell}\Sob{\bdy^\al\psi_q}{L^\infty}\leq c_\ell h_q^{-\ell}$}, where $h_q=\diam(\til{Q}_q)$;
        \item[\namedlabel{item:P5}{(P5)}] there exists $\de>0$ such that whenever $\supp\psi_q\cap\supp\psi_{q'}\neq\varnothing$, one has $\de^{-1}h_q\leq h_{q'}\leq \de h_q$.
    \end{description}
We refer to \ref{item:P5} as the \textbf{\textit{$\de$--adic condition}}. Indeed, this condition implies that all ``neighbors," $Q_{q'}$, of $Q_q$ have diameters equivalent to $Q_q$ up to the fixed multiplicative factors $\de,\de^{-1}$. We will refer to $\Psi$ as a \textbf{\textit{$\de$--adic, $C^k$--partition of unity subordinate to $\QQ$}}. If $\Psi$ additionally satisfies $\Psi\subset C^\infty(\Om)$ and \ref{item:P4} holding for all $k$, then $\Psi$ is a \textbf{$\de$--adic, \textit{$C^\infty$--partition of unity}}. {For the majority of the manuscript, it will be assumed that $\Psi$ satisfies \ref{item:P1}--\ref{item:P5}, so we} will simply refer to $\Psi$ as a partition of unity. Lastly, it will also be useful to have additional control on the diameters in the covering. For this, we say that $\QQ$ is a \textbf{\textit{uniform cover at scale} $\mathbf{\textit{h}}$} if there exists $h>0$ such that $\de h\leq h_q\leq \de^{-1} h$, for all $q$. 

{Before proceeding to define a global interpolant operator, let us establish two useful facts which are consequences of the various partition of unity assumptions. In particular, for the moment, we do not necessarily assume that $\Psi$ satisfies every property \ref{item:P1}--\ref{item:P5}.}

\begin{Lem}\label{lem:pou}
Let $\{f_q\}_q\subset L^2(\Om)$. Suppose that $\{\psi_q\}_q\subset L^\infty(\Om)$ satisfies  \ref{item:P2} and \ref{item:P5}, and $\Sob{\psi_q}{L^\infty}\leq \lam(h_q)$, for all $q$, for some monotonic, homogeneous function $\lam:[0,\infty)\goesto[0,\infty)$ of degree $\rho$. Then
  \begin{align}\label{est:pou}
         \int_\Om\left(\sum_q\psi_q(x)f_q(x)\right)^2dx\leq N (\max\{\de,\de^{-1}\})^\rho  \sum_q\lam(h_q)^2\Sob{f_q}{L^2(\supp\psi_q)}^2.
     \end{align}
\end{Lem}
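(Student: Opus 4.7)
The plan is to reduce the integral estimate to a pointwise one by exploiting the finite overlap condition \ref{item:P2}, then apply the uniform $L^\infty$ bound on each $\psi_q$, and finally sum using monotonicity and homogeneity of $\lam$ together with the $\de$--adic condition \ref{item:P5}. The quantity $N$ will come out as a constant depending only on the overlap number $\pi_0$ from \ref{item:P2} (in fact, one expects $N$ to equal $\pi_0$).

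First I would fix $x\in\Om$ and observe that, by \ref{item:P2}, at most $\pi_0$ of the values $\psi_q(x)$ are nonzero, since $x\in\supp\psi_q$ only if $x\in\til{Q}_q$. Letting $\mathcal{J}(x):=\{q:x\in\supp\psi_q\}$, the elementary inequality $(\sum_{i=1}^{\pi_0} a_i)^2\leq \pi_0\sum_{i=1}^{\pi_0} a_i^2$ then yields the pointwise estimate
\eq{
\left(\sum_q\psi_q(x)f_q(x)\right)^2=\left(\sum_{q\in\mathcal{J}(x)}\psi_q(x)f_q(x)\right)^2\leq \pi_0\sum_q\psi_q(x)^2 f_q(x)^2\,\1_{\supp\psi_q}(x).
}
Next I would apply the hypothesis $\Sob{\psi_q}{L^\infty}\leq \lam(h_q)$ to replace $\psi_q(x)^2$ by $\lam(h_q)^2$ uniformly on $\supp\psi_q$. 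At this stage one may integrate term-by-term over $\Om$ to obtain a preliminary bound of the form $\pi_0\sum_q\lam(h_q)^2\Sob{f_q}{L^2(\supp\psi_q)}^2$.

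The remaining task is to exhibit the factor $(\max\{\de,\de^{-1}\})^\rho$, which is where \ref{item:P5} and the homogeneity of $\lam$ enter. By \ref{item:P5}, whenever $\supp\psi_q\cap\supp\psi_{q'}\neq\varnothing$ one has $\de^{-1}h_q\leq h_{q'}\leq\de h_q$, so monotonicity and homogeneity of degree $\rho$ give $\lam(h_{q'})\leq \lam(\max\{\de,\de^{-1}\}\,h_q)=(\max\{\de,\de^{-1}\})^{\rho}\lam(h_q)$. This allows one to replace $\lam(h_{q'})^2$ by $(\max\{\de,\de^{-1}\})^{\rho}\lam(h_q)^2$ (or symmetrize) whenever a cross-term between neighbors $q,q'$ is reindexed, which is the natural way this factor surfaces if one expands the square $(\sum_q\psi_q f_q)^2=\sum_{q,q'}\psi_q\psi_{q'}f_qf_{q'}$ and applies AM--GM to each pair $(q,q')$ of overlapping indices. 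Combining with the overlap bound \ref{item:P2} (which limits the number of such pairs per $q$ to $\pi_0$) absorbs the cross-sums into a single sum of the form $N\sum_q\lam(h_q)^2\Sob{f_q}{L^2(\supp\psi_q)}^2$ multiplied by the scale-conversion factor $(\max\{\de,\de^{-1}\})^\rho$.

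The proof is essentially bookkeeping, and the main obstacle is simply choosing the cleanest decomposition that simultaneously produces both the combinatorial constant $N\lesssim \pi_0$ and the geometric constant $(\max\{\de,\de^{-1}\})^\rho$ without losing a factor in $h$; the pairwise expansion above, with AM--GM and \ref{item:P5}, seems to achieve this most directly.
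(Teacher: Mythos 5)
Your first argument is already a complete proof, and it is cleaner than the paper's. The pointwise Cauchy--Schwarz over the at most $\pi_0$ indices $q$ with $x\in\supp\psi_q$, followed by $|\psi_q(x)|\leq\lam(h_q)$ on $\supp\psi_q$ and termwise integration, yields
\begin{align*}
\int_\Om\Big(\sum_q\psi_q(x)f_q(x)\Big)^2dx\leq \pi_0\sum_q\lam(h_q)^2\Sob{f_q}{L^2(\supp\psi_q)}^2,
\end{align*}
i.e., the stated bound with $N=\pi_0$ and no $\de$ factor. You should stop there. The second half of your write-up, where you try to ``exhibit'' $(\max\{\de,\de^{-1}\})^\rho$, misreads the task: that factor is not something you must produce, it is an artifact of the paper's particular route. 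The paper expands the square into $\sum_{q,q'}\psi_q\psi_{q'}f_qf_{q'}$, applies a double Cauchy--Schwarz to reduce to $\sum_{q,q'}\int|\psi_q||\psi_{q'}|f_q^2$, and is then forced to trade the neighbor's scale factor $\lam(h_{q'})$ for one in terms of $\lam(h_q)$ via \ref{item:P5} and the monotone homogeneity of $\lam$; that is exactly where the $\de$ factor is born. Your pointwise estimate never produces a mixed pair $\psi_q\psi_{q'}$ --- only $\psi_q(x)^2$ --- so it needs neither \ref{item:P5} nor the monotonicity/homogeneity hypotheses, and the combinatorial constant $\pi_0$ alone suffices. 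This is sharper than the paper's bound when $\rho\geq 0$, and in any case suffices for every downstream use, where these constants are absorbed anyway.

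One correction to your second sketch: $\lam(h_{q'})\leq\lam(\max\{\de,\de^{-1}\}h_q)$ presupposes $\lam$ is increasing. In the paper's actual application $\lam(h)=c_\ell h^{-\ell}$ is decreasing, so one should instead pass through the lower bound $h_{q'}\geq\min\{\de,\de^{-1}\}h_q$ from \ref{item:P5}; the exponent that covers both monotonicities is $|\rho|$, and indeed the paper's own chain of inequalities produces $(\max\{\de,\de^{-1}\})^{|\rho|}$ rather than the $(\max\{\de,\de^{-1}\})^{\rho}$ written in the statement (a harmless sign slip, absorbed into universal constants). Your first, pointwise argument sidesteps this issue entirely, which is another reason to prefer it.
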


\begin{proof}
By the Cauchy-Schwarz inequality, it follows that
  \begin{align}
         \int_\Om\left(\sum_q\psi_q(x)f_q(x)\right)^2
         &\leq \left(\sum_{q,q'}\int_\Om|\psi_q(x)||\psi_{q'}(x)|f_q(x)^2dx\right)^{1/2}\left(\sum_{q,q'}\int_\Om|\psi_q(x)||\psi_{q'}(x)|f_{q'}(x)^2dx\right)^{1/2}\notag\\
         &\leq\sum_{q}\left(\sum_{\supp\phi_q\cap\supp\psi_{q'}\neq\varnothing}\Sob{\psi_{q'}}{L^\infty}\right)\Sob{|\psi_q|^{1/2}f_q}{L^2(\Om)}^2\notag\\
         &\leq N (\max\{\de,\de^{-1}\})^\rho  \sum_q\lam(h_q)^2\Sob{f_q}{L^2(\supp\psi_q)}^2,\notag
     \end{align}
where we applied \ref{item:P2}, \ref{item:P5}, and the boundedness hypothesis of the $\psi_q$ in obtaining the final two inequalities.
\end{proof}
   
\begin{Lem}\label{lem:multiplicity}
Let $\phi\in L^1(\Om)$ such that $\phi\geq0$. Suppose that $\Psi$ satisfies \ref{item:P1}--\ref{item:P3}. Then
	\begin{align}\label{est:multiplicity}
			\frac{1}{\pi_0}\sum_q\int_{\til{Q}_q}\phi(x)dx\leq \int_\Om\phi(x)dx\leq \sum_q\int_{\til{Q}_q}\phi(x)dx,
	\end{align}
where $\pi_0$ is the constant from \ref{item:P2}.
\end{Lem}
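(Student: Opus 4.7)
The plan is to reduce both inequalities to pointwise statements about the counting function $\Phi(x) := \sum_q \mathbf{1}_{\til{Q}_q}(x)$ and then integrate against $\phi$, using $\phi \geq 0$ to justify interchange of sum and integral (monotone convergence / Tonelli).

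For the upper bound, I would observe that $\QQ$ is a covering of $\Om$ and that $Q_q \subset \til{Q}_q$ by the definition in \ref{item:P1} (since $\til{Q}_q = Q_q + B(0,\de_q)$ with $\de_q>0$). Thus every $x \in \Om$ lies in at least one $\til{Q}_q$, so $\Phi(x) \geq 1$. Multiplying by $\phi(x) \geq 0$ and integrating over $\Om$ yields
\begin{align*}
\int_\Om \phi(x)\,dx \;\leq\; \int_\Om \phi(x)\Phi(x)\,dx \;=\; \sum_q \int_{\til{Q}_q}\phi(x)\,dx,
\end{align*}
where Tonelli justifies the exchange. (Note that \ref{item:P3} is not needed for this half; the mere fact that the $\til{Q}_q$ cover $\Om$ suffices.)

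For the lower bound, the key point is that \ref{item:P2} translates directly into a uniform pointwise bound $\Phi(x) \leq \pi_0$. Indeed, fix $x \in \Om$ and suppose $x \in \til{Q}_{q_0}$ for some index $q_0$; then for every other $q'$ with $x \in \til{Q}_{q'}$ one has $x \in \til{Q}_{q_0}\cap\til{Q}_{q'}$, so these two sets intersect. By \ref{item:P2}, there are at most $\pi_0$ such indices $q'$, which gives $\Phi(x)\leq\pi_0$. Multiplying by $\phi(x)\geq 0$, integrating, and again applying Tonelli gives
\begin{align*}
\sum_q\int_{\til{Q}_q}\phi(x)\,dx \;=\; \int_\Om \phi(x)\Phi(x)\,dx \;\leq\; \pi_0\int_\Om\phi(x)\,dx,
\end{align*}
which is the desired lower bound after dividing by $\pi_0$.

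I do not anticipate a real obstacle here; the only subtlety is making sure that \ref{item:P2} is interpreted as controlling the overlap multiplicity of the enlarged sets $\til{Q}_q$ (and not merely of the $Q_q$ or of the supports $\supp\psi_q$), but this is exactly what \ref{item:P2} asserts. Property \ref{item:P3} plays no essential role beyond ensuring that $\{\til{Q}_q\}$ actually covers $\Om$, which already follows from $\QQ$ being a covering together with $Q_q \subset \til{Q}_q$.
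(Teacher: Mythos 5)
Your proof is correct, and it takes a genuinely different (and cleaner) route than the paper's. The paper proves the lower bound by introducing auxiliary index sets $\mathcal{N}_q$, inserting the partition-of-unity functions $\psi_{q'}$ via \ref{item:P1} (using $\psi_{q'}=1$ on $Q_{q'}$), and then invoking \ref{item:P3} to sum the $\psi_{q'}$ back to $1$; this requires tracking overlaps between the enlarged sets $\til{Q}_q$ and the original sets $Q_{q'}$ through a somewhat delicate double sum. You instead work directly with the counting function $\Phi(x)=\sum_q\mathbf{1}_{\til{Q}_q}(x)$ and observe that \ref{item:P2} immediately gives the pointwise bound $\Phi(x)\leq\pi_0$ (and the covering property gives $\Phi(x)\geq1$), reducing both inequalities of \eqref{est:multiplicity} to multiplying by $\phi\geq0$ and applying Tonelli. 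What your approach buys is transparency and slightly more generality: it makes explicit that \ref{item:P3} (and indeed the entire family $\Psi$) plays no role here --- only the bounded-overlap property of the $\til{Q}_q$ and the covering property of $\QQ$ matter --- whereas the paper's argument, while valid, obscures this by routing through $\Psi$. The one subtle point, which you handle correctly, is that \ref{item:P2} controls the overlap multiplicity of the \emph{enlarged} sets $\til{Q}_q$ rather than the $Q_q$; your bound $\Phi(x)\leq\pi_0$ is an honest consequence of fixing any $q_0$ with $x\in\til{Q}_{q_0}$ and noting that every $q'$ with $x\in\til{Q}_{q'}$ contributes a nonempty intersection $\til{Q}_{q_0}\cap\til{Q}_{q'}$.
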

\begin{proof}
{Let $\mathcal{N}_q$ denote the set of indices, $q'$, such that $\til{Q}_{q'}\cap Q_q\neq\varnothing$. Observe that since $Q_{q'}\subset\til{Q}_{q'}$, we have from \ref{item:P2} that $\#(\mathcal{N}_q)\leq N$, for all $q$. Since $\mathcal{Q}$ is a cover of $\Om$, it follows that $\til{Q}_q\subset\bigcup_{q'\in\mathcal{N}_q}Q_q$. Since $\psi_{q'}=1$ on $Q_{q'}$ by \ref{item:P1}, we deduce}
	\begin{align}
	\sum_q\int_{\til{Q}_q}\phi(x)dx\leq\sum_q\sum_{q'\in\mathcal{N}_q}\int_{{Q}_{q'}\cap\til{Q}_q}\phi(x)dx\leq\sum_{q'}\left(\sum_{q}\chi_{\mathcal{N}_q}(q')\right)\int_{Q_{q'}}\psi_{q'}(x)\phi(x)dx\notag.
	\end{align}
{Since $\sum_{q}\chi_{\mathcal{N}_q}(q')\leq\#(\mathcal{N}_{q'})\leq \pi_0$, for each $q'$, we may conclude from \ref{item:P3} that}
	\begin{align}\notag
	\sum_q\int_{\til{Q}_q}\phi(x)dx\leq \pi_0\int_\Om\phi(x)dx
	\end{align}
{On the other hand, since $\mathcal{Q}$ is a cover of $\Om$ and $Q_q\subset\til{Q}_q$ by \ref{item:P1}, it follows that}
	\begin{align}
		\int_\Om\phi(x)dx\leq \sum_q\int_{Q_q}\phi(x)dx\leq\sum_q\int_{\til{Q}_q}\phi(x)dx,\notag
	\end{align}
{which completes the proof.}
\end{proof}

\begin{Rmk}\label{rmk:finite}
Partitions of unity satisfying \ref{item:P1}--\ref{item:P5} were constructed in \cite{AzouaniOlsonTiti2014, BessaihOlsonTiti2015, JollyMartinezOlsonTiti2019}. There, a collection of augmented squares overlapped in a regular manner to cover the domain multiple times; one may refer to this property as having ``finite partition multiplicity." In general, the collection of open sets to which a partition of unity is subordinate, need not satisfy this property. Indeed, let us formally introduce this notion as follows:

\begin{Def}\label{def:partition:mult}
Let $\QQ=\{Q_q\}_q$ be a covering of $\Om$ by bounded, open, connected subsets. We say that $\QQ$ has \textbf{partition multiplicity}, ${M}>0$, if there exists an integer, ${M}>0$, and  subcollections $\QQ_1,\dots, \QQ_{M}\subset\QQ$ such that $\bigcup_{j=1}^M\QQ_j=\QQ$, $\bigcup_{Q\in\QQ_j}\bar{Q}=\Om$, and $|\bar{Q}\cap \bar{Q}'|=0$, for all $Q,Q'\in\QQ_j$, for all $j=1,\dots, M$, where $\bar{Q}$ denotes the closure of $Q$.
\end{Def}

\begin{Lem}\label{lem:partition:mult}
Suppose $\QQ$ is a covering of $\Om$ with partition multiplicity ${M}$. Then
    \begin{align}\label{est:partition:mult}
        \frac{1}{M}\sum_Q\int_Q\phi(x)dx\leq\int_\Om\phi(x)dx\leq\sum_{Q\in\QQ_j}\int_Q\phi(x)dx,
    \end{align}
for all $\phi\in L^1(\Om)$ such that $\phi\geq0$, and for all $j=1,\dots M$.
\end{Lem}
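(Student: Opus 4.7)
The statement is essentially a double-counting identity dressed as two inequalities, so the plan is purely bookkeeping: derive each bound directly from the two defining properties of partition multiplicity, namely that each $\QQ_j$ tiles $\Om$ up to measure zero, and that the subcollections $\QQ_1,\dots,\QQ_M$ collectively exhaust $\QQ$.

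For the upper bound, I would fix an index $j\in\{1,\dots,M\}$. By \cref{def:partition:mult}, $\bigcup_{Q\in\QQ_j}\bar Q=\Om$ and $|\bar Q\cap \bar{Q'}|=0$ for distinct $Q,Q'\in\QQ_j$. Thus $\QQ_j$ is a partition of $\Om$ modulo null sets, and since $\phi\in L^1(\Om)$, countable (or even finite) additivity of the Lebesgue integral gives
\begin{align}
\int_\Om \phi(x)\,dx = \sum_{Q\in\QQ_j}\int_Q \phi(x)\,dx,\notag
\end{align}
which of course yields the stated inequality $\int_\Om\phi\le\sum_{Q\in\QQ_j}\int_Q\phi$ (in fact with equality). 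The nonnegativity of $\phi$ is not really needed here; it only ensures that the sums make sense unambiguously in $[0,\infty]$.

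For the lower bound, I would sum the identity from the previous step over $j=1,\dots,M$:
\begin{align}
M\int_\Om \phi(x)\,dx \;=\; \sum_{j=1}^{M}\sum_{Q\in\QQ_j}\int_Q \phi(x)\,dx.\notag
\end{align}
Because $\bigcup_{j=1}^M\QQ_j=\QQ$, each $Q\in\QQ$ appears at least once, and because there are exactly $M$ subcollections, no $Q$ appears more than $M$ times. Combined with $\phi\ge0$, which makes every term nonnegative, this gives
\begin{align}
\sum_{Q\in\QQ}\int_Q \phi(x)\,dx \;\le\; \sum_{j=1}^{M}\sum_{Q\in\QQ_j}\int_Q \phi(x)\,dx \;=\; M\int_\Om\phi(x)\,dx,\notag
\end{align}
and dividing by $M$ produces the left-hand inequality in \eqref{est:partition:mult}.

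There is essentially no analytic obstacle; the only thing to be careful about is to record where each hypothesis of \cref{def:partition:mult} is used (the tiling property for the upper bound, the covering of $\QQ$ by the $\QQ_j$'s for the lower bound) and to invoke the nonnegativity of $\phi$ at the single place where subcollections are allowed to share sets. A completely rigorous write-up should probably just display the two inequalities inline as above and point to \cref{def:partition:mult}.
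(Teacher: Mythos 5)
Your proof is correct and follows essentially the same route as the paper: derive the equality $\int_\Om\phi=\sum_{Q\in\QQ_j}\int_Q\phi$ from the tiling property of each $\QQ_j$, then sum (or average) over $j$ and use $\bigcup_j\QQ_j=\QQ$ together with $\phi\ge 0$ to bound $\sum_{Q\in\QQ}\int_Q\phi$ from above by $M\int_\Om\phi$. The only cosmetic difference is that the paper phrases the last step via Fubini and the indicator sum $\sum_j\chi_{\QQ_j}(Q)\ge 1$, whereas you phrase it as ``each $Q$ appears at least once''; your remark that no $Q$ appears more than $M$ times is true but not actually needed.
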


\begin{proof}
Let $\QQ_1,\dots, \QQ_{M}$ denote the ${M}$ subcollections of $\QQ$ that each cover $\Om$ and each of whose respective elements can overlap only on sets of zero Lebesgue measure. Observe that for all $j=1,\dots, M$, we have
    \begin{align}
        \int_\Om\phi(x)dx=\sum_{Q\in\QQ_j}\int_Q\phi(x)dx,\notag
    \end{align}
which implies the upper bound. Now, upon averaging in $j$ and applying Fubini's theorem, we arrive at
    \begin{align}\notag
        \int_\Om\phi(x)dx=\frac{1}{M}\sum_{j=1}^{M}\sum_{Q\in\QQ_j}\int_Q\phi(x)dx=\frac{1}{M}\sum_Q\int_Q\phi(x)dx\sum_{j=1}^{M}\chi_{\QQ_j}(Q).
    \end{align}
Since $\phi\geq0$ and $\sum_{j=1}^M\chi_{\QQ_j}(Q)\geq1$, it follows that 
    \begin{align}\notag
    \int_\Om\phi(x)dx
    \geq \frac{1}{M}\sum_Q\int_Q\phi(x)dx,
    \end{align}
which produces the lower bound, as desired.
\end{proof}

{We therefore see that the first inequality of \cref{lem:multiplicity} already follows from the assumptions \ref{item:P1}--\ref{item:P5} (see  \cref{lem:multiplicity}). Indeed, property \ref{item:P2} basically asserts a type of ``local multiplicity," whereas a cover with finite partition multiplicity is a form of ``global multiplicity." In contrast, the assumptions on $\Psi$ allow for the possibility of having an infinite open covering in the case of a general bounded domain, i.e., bounded, open, connected subset of the plane. Indeed, if $\Om$ is a disk centered at the origin, then the open covering given by a small disk centered at the origin followed by consecutively overlapping concentric open annuli with geometrically decreasing length, appropriately proportional to the radius of the disk, provides such an example.}
\end{Rmk}



Let us now define a global interpolant operator. For convenience, whenever we refer to a partition of unity, we will specifically consider ones of the type described above, that is, satisfying \ref{item:P1}--\ref{item:P5}.

\begin{Def}\label{def:IO:global}
Given a covering, $\QQ=\{Q_q\}_q$, of $\Om$ by bounded, open, connected subsets with $h_q=\diam(Q_q)$, we say that
the family, $\II=\{I^{(q)}\}_q$, of local I.O.O.'s is \textbf{subordinate to $\QQ$} if for each $q$, $I^{(q)}$ is an $m_q$--th order $Q_q$--local I.O.O. at level $k_q$, for some integers $m_q\geq0$ and $k_q\geq m_q+1$. We furthermore say that the family is \textbf{$\QQ$--uniform} if the associated constants of each $I^{(q)}\in\II$ satisfy $\sup_q\sup_{\ell\leq j\leq k_q-1}\eps_{\ell,j}(Q_q)<\infty$, for each $\ell$.  We say that $\II$ is an \textbf{$\mathbf{\text{(m,k)}}$--generic} family if there exist $m\geq0$ and $\infty\geq k\geq m+1$ such that $m=m_q$ and $k=k_q$, for all $q$.

Given $m\geq0$ and $m+1\leq k\leq \infty$, we say an operator $I_{m,k}$ is a 
\textbf{\textit{$(\II,\Psi)_\QQ$--subordinate global I.O.O. of order $\mathbf{\text{m}}$ at level $\mathbf{\text{k}}$}} if there exists a $\QQ$--subordinate family, $\II$, of I.O.O.'s, and $\QQ$--subordinate $C^k$--partition of unity, $\Psi$, such that $m\leq \inf_qm_q$ and $k\geq\sup_qk_q$, and
    \begin{align}\label{eq:IO:global}
        (I_{m,k}\phi)(x):=\sum_q\psi_q(x)(I^{(q)}\phi_q)(x),\quad x\in \Om,
    \end{align}
whenever $\phi\in \bigcap_{q}H^{k_q}(\Om)$, where $\phi_q=\phi|_{\til{Q}_q}$. 

\end{Def}

 Note that when the associated partition of unity is clear, we will simply say that $I_{m,k}$ is the $\II$--subordinate global I.O.O. with associated covering $\QQ$. On the other hand, in light of the ``dealiasing" procedure described in \cref{rmk:IO:loc:convention}, we see that if $I_{m,k}$ is an $(\II,\Psi)_\QQ$-subordinate global I.O.O., then each $I^{(q)}\in\II$ is an $m$--th order $Q_q$--local I.O.O. at level $k$ such that $m\leq\inf_qm_q$ and $k\geq\sup_qk_q$. In particular, we immediately deduce the following fact.

\begin{Lem}\label{lem:generic}
Let $m,k\geq0$ be such that $k\geq m+1$. Every $(\II,\Psi)_\QQ$--subordinate global I.O.O. of order $m$ at level $k$ may be realized as an $(\til{\II},\Psi)_\QQ$--subordinate global I.O.O. of order $\til{m}$ at level $\til{k}$, where $\til{\II}$ is $(\til{m},\til{k})$--generic. In particular, we may choose $\til{m}=\inf_qm_q$ and $\til{k}=\sup_qk_q$, where $(m_q,k_q)$ denotes the order and level associated to the canonical representative of $I^{(q)}$.
\end{Lem}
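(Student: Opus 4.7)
The plan is to exploit the observation made in Remark 3.2: any $Q$--local I.O.O.\ of order $m$ at level $k$ may be reinterpreted as a $Q$--local I.O.O.\ of any order $m'\le m$ at any level $k'\ge k$ by padding the matrix of associated constants with zeros. The lemma is essentially a bookkeeping consequence of this flexibility, together with the fact that the defining formula \eqref{eq:IO:global} depends only on the operators $I^{(q)}$ as maps, not on the particular $(m_q,k_q)$ used to describe them.

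First, I would fix an $(\II,\Psi)_\QQ$--subordinate global I.O.O.\ $I_{m,k}$ of order $m$ at level $k$, with $\II=\{I^{(q)}\}_q$. Let $(m_q,k_q)$ denote the order and level of the canonical representative of $I^{(q)}$ in the sense of Remark 3.2, and set $\til{m}:=\inf_q m_q$ and $\til{k}:=\sup_q k_q$. By Definition 3.3, we have $m\le \inf_q m_q=\til m$ and $k\ge \sup_q k_q=\til k$, so in particular $\til{k}\ge \til{m}+1$ and $\til m,\til k$ are admissible indices.

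Next, for each $q$ I would reinterpret $I^{(q)}$ as a $Q_q$--local I.O.O.\ of order $\til{m}$ at level $\til{k}$. Since the canonical representative of $I^{(q)}$ is of order $m_q\ge \til m$ at level $k_q\le \til k$, the estimate
\begin{align}\notag
    \Sob{\phi-I^{(q)}\phi}{\dot{H}^\ell(Q_q)}^2
    \le \sum_{j=1}^{k_q-\ell}\eps_{\ell,j}(Q_q)^2 h_q^{2j}\Sob{\phi}{\dot{H}^{\ell+j}(Q_q)}^2
\end{align}
holds for all $0\le \ell\le m_q$, and in particular for $0\le \ell\le \til m$. Defining new associated constants $\til\eps_{\ell,j}(Q_q)$ by setting $\til\eps_{\ell,j}(Q_q)=\eps_{\ell,j}(Q_q)$ for $1\le j\le k_q-\ell$ and $\til\eps_{\ell,j}(Q_q)=0$ for $k_q-\ell<j\le \til k-\ell$, we see that the same inequality holds with upper limit of summation $\til k-\ell$, thereby certifying $I^{(q)}$ as a $Q_q$--local I.O.O.\ of order $\til m$ at level $\til k$. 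Calling this reinterpreted operator $\til I^{(q)}$ and setting $\til\II:=\{\til I^{(q)}\}_q$, we obtain an $(\til m,\til k)$--generic family.

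Finally, I would define $I_{\til m,\til k}$ as the $(\til\II,\Psi)_\QQ$--subordinate global I.O.O., and note that the admissibility conditions $\til m\le \inf_q \til m_q=\til m$ and $\til k\ge \sup_q \til k_q=\til k$ are trivially satisfied. Since $\til I^{(q)}=I^{(q)}$ as operators on $H^{k_q}(Q_q)\supseteq H^{\til k}(Q_q)$, the defining formula \eqref{eq:IO:global} gives, for any $\phi\in \bigcap_q H^{\til k}(Q_q)$,
\begin{align}\notag
    (I_{\til m,\til k}\phi)(x)=\sum_q\psi_q(x)(\til I^{(q)}\phi_q)(x)=\sum_q\psi_q(x)(I^{(q)}\phi_q)(x)=(I_{m,k}\phi)(x),
\end{align}
which completes the identification. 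The only subtlety worth watching, and really the main point of the argument, is that one must verify that the de-aliasing is compatible both with the order index $\ell$ (one throws away the higher-order rows) and with the level index $j$ (one appends zero columns), and that neither of these operations alters the operator itself; once this is phrased carefully the proof is immediate.
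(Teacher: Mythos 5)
Your proof is correct and follows essentially the same dealiasing argument the paper has in mind, namely applying \cref{rmk:IO:loc:convention} uniformly in $q$ to reinterpret each $I^{(q)}$ at the common order $\til{m}=\inf_q m_q$ and level $\til{k}=\sup_q k_q$, and observing that the defining formula \eqref{eq:IO:global} depends only on the $I^{(q)}$ as maps. One small imprecision worth flagging: your stated justification for $\til{k}\ge\til{m}+1$ — that it follows ``in particular'' from $m\le\til{m}$, $k\ge\til{k}$, and $k\ge m+1$ — is not valid as written (e.g.\ $m=0$, $k=10$, $\til{m}=5$, $\til{k}=3$ satisfies those three inequalities but not the conclusion). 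The correct reasoning is to invoke the pointwise constraint $k_q\ge m_q+1$ from \cref{def:IO:loc}: for every $q$ one has $\til{k}=\sup_{q'}k_{q'}\ge k_q\ge m_q+1$, hence $\til{k}-1\ge m_q$ for all $q$, and taking the infimum over $q$ gives $\til{k}-1\ge\til{m}$.
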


Without loss of generality, we may therefore always assume that any global I.O.O., $I_{m,k}$, derives from an $(m,k)$--generic family $\II$ of local I.O.O.'s. Now, as a consequence of \cref{def:IO:loc}, the properties of the partition of unity, and \eqref{eq:IO:global}, we have the following.

\begin{Prop}\label{prop:IO:global}
Let $m,k\geq0$ be integers such that $k\geq m+1$. Let $I_{m,k}$ be an $(\II,\Psi)_\QQ$--subordinate global I.O.O, where $\II$ is $(m,k)$--generic, and $\QQ=\{Q_q\}_q$ denotes the associated covering. Then there exist constants $\{\veps_{\ell,j}(Q_q)\}_q$ such that for all $0\leq\ell\leq m$
    \begin{align}\label{est:IO:Hl}
     \Sob{\phi-I_{m,k}\phi}{\dot{H}^\ell}^2
         &\leq c\sum_{j=1}^{k}\sum_q\veps_{\ell,j}(Q_q)^2h_q^{2(j-\ell)}\Sob{\phi}{\dot{H}^{j}(\til{Q}_q)}^2,
    \end{align}
for some constant $c>0$, independent of $h$, and where $\veps_{\ell,j}$ can be specified as
      \begin{align}\label{def:IO:const:gen}
        \veps_{\ell,j}(Q_q)^2:=
        \sum_{i=0}^{j-1}\eps_{i,j-i}(Q_q)^2,
    \end{align}
where $\eps_{i,j}(Q_q)$ are the constants associated to $I^{(q)}\in\II$. On the other hand, if $I^{(q)}$ interpolates optimally over $Q_q$ (at level $k$), for all $q$, then for all $1\leq k'\leq k$ and $0\leq\ell\leq k'-1$
    \begin{align}\label{est:IO:optimal}
        \Sob{\phi-I_{k}\phi}{\dot{H}^\ell}^2\leq \sum_q\veps_{\ell,k'}(Q_q)^2h_q^{2(k'-\ell)}\Sob{\phi}{\dot{H}^{k'}(\til{Q}_q)}^2,
    \end{align}
where
    \begin{align}\label{def:IO:const:optimal}
        \veps_{\ell,k'}(Q_q)^2=\sum_{i\leq\ell}\eps_{i,k'-i}(Q_q)^2.
    \end{align}
\end{Prop}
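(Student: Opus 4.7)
The plan is to start from the pointwise decomposition $\phi = \sum_q \psi_q \phi|_{\til{Q}_q}$ guaranteed by \ref{item:P3}, and combine it with the definition \eqref{eq:IO:global} of the global interpolant to write
\[
\phi - I_{m,k}\phi \;=\; \sum_q \psi_q\bigl(\phi_q - I^{(q)}\phi_q\bigr).
\]
Once this identity is in hand, the proof reduces to controlling an $\dot{H}^\ell$--norm of a sum whose summands are each supported in the augmented subdomain $\til{Q}_q$ and whose fluctuations $\phi_q - I^{(q)}\phi_q$ satisfy the local approximation property \eqref{eq:IO:loc:approx} up to order $m$ at level $k$. By \cref{lem:generic}, I may assume the family $\II$ is $(m,k)$--generic, so that every $I^{(q)}$ admits the estimate \eqref{eq:IO:loc:approx} simultaneously.

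Next, for a fixed multi-index $\al$ with $|\al|=\ell$, I would apply the Leibniz rule to distribute $\bdy^\al$ across the product $\psi_q(\phi_q - I^{(q)}\phi_q)$ and absorb the derivatives that fall on $\psi_q$ using \ref{item:P4}, which provides a bound of order $h_q^{-(\ell-i)}$ on $\bdy^{\al-\be}\psi_q$ when $|\be|=i$. Taking $L^2$ norms and invoking \cref{lem:pou} with the homogeneous weights $\lam(h_q)=h_q^{-(\ell-i)}$ on the local factors $\bdy^\be(\phi_q - I^{(q)}\phi_q)$, the cross terms between distinct $q$'s collapse thanks to \ref{item:P2} and \ref{item:P5}, leaving an estimate of the form
\[
\Sob{\phi - I_{m,k}\phi}{\dot{H}^\ell}^2 \;\leq\; c \sum_q \sum_{i=0}^{\ell} h_q^{-2(\ell-i)}\, \Sob{\phi_q - I^{(q)}\phi_q}{\dot{H}^i(\til{Q}_q)}^2,
\]
with a constant depending only on $\pi_0$, $\de$, and the universal constants $c_\ell$ from \ref{item:P4}.

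At this point I would apply \eqref{eq:IO:loc:approx} at each regularity level $0\leq i\leq\ell\leq m$, which introduces a sum over $j=1,\ldots,k-i$ of terms $\eps_{i,j}(Q_q)^2 h_q^{2j}\Sob{\phi}{\dot{H}^{i+j}(\til{Q}_q)}^2$. Reindexing by the total order $j' := i+j$ and noting the cancellation $h_q^{-2(\ell-i)}h_q^{2j} = h_q^{2(j'-\ell)}$ yields the outer sum in $j'$ with coefficient $\sum_{i} \eps_{i,j'-i}(Q_q)^2$ in front of $\Sob{\phi}{\dot{H}^{j'}(\til{Q}_q)}^2$. Enlarging the inner range of $i$ to $0\leq i\leq j'-1$ (which can only increase the bound) produces precisely the formula \eqref{def:IO:const:gen} for $\veps_{\ell,j}(Q_q)$, giving \eqref{est:IO:Hl}. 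The optimal case \eqref{est:IO:optimal}--\eqref{def:IO:const:optimal} proceeds identically, except that \eqref{eq:IO:loc:optimal} replaces \eqref{eq:IO:loc:approx}, so only a single level $k'$ appears on the right-hand side and the summation range in $i$ is naturally capped at $\ell$, yielding the sharper formula \eqref{def:IO:const:optimal}.

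The main obstacle I anticipate is the bookkeeping required to make \cref{lem:pou} yield $q$--uniform constants and to verify that every reindexing produces exactly the prescribed combination $\sum_{i=0}^{j-1}\eps_{i,j-i}(Q_q)^2$. A secondary subtlety is reconciling the fact that each $I^{(q)}$ is defined locally on $Q_q$ while the Leibniz decomposition naturally produces norms on the larger set $\til{Q}_q$ where $\psi_q$ is supported; one must either interpret $I^{(q)}$ as extending to $\til{Q}_q$ or rely on the $\de$--adic condition \ref{item:P5} to absorb the enlargement from $Q_q$ to $\til{Q}_q$ into a universal constant without affecting the scaling in $h_q$.
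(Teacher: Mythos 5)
Your proposal follows essentially the same route as the paper's proof: the same partition-of-unity decomposition $\phi-I_{m,k}\phi=\sum_q\psi_q(\phi-I^{(q)}\phi)$, the same Leibniz-rule distribution of $\bdy^\al$ with \ref{item:P4} and \cref{lem:pou} handling the cutoff derivatives and cross terms, the same application of \eqref{eq:IO:loc:approx} (resp.\ \eqref{eq:IO:loc:optimal}) followed by the reindexing $j'=i+j$ that produces \eqref{def:IO:const:gen} and \eqref{def:IO:const:optimal}. The argument is correct and the bookkeeping you flag is exactly the content of the paper's displayed chain of inequalities.
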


\begin{proof}
Since $\Psi$ is a partition of unity, observe that
    \begin{align}
        \phi(x)- I_{m,k}\phi(x)=\sum_q\psi_q(x)(\phi(x)-I^{(q)}\phi(x)).\notag
    \end{align}
Let $\al$ be a multi-index such that $|\al|=\ell$, where $0\leq\ell\leq m$. It then follows from the Leibniz rule that
    \begin{align}
        \bdy^\al(\phi-I_{m,k}\phi)=\sum_q\sum_{\be\leq\al}c_{\al,\be}\bdy^{\al-\be}\psi_q(x)\bdy^{\be}(\phi-I^{(q)}\phi).\notag
    \end{align}
Upon taking absolute values, squaring both sides, integrating over $\Om$, summing over $|\al|\leq\ell$, then applying \eqref{eq:IO:loc:approx}, \ref{item:P4}, and \cref{lem:pou} (with $\phi_q=\bdy^{\al-\be}\psi_q$), we obtain
    \begin{align}
        \Sob{\phi-I_{m,k}\phi}{\dot{H}^\ell}^2&\leq c\sum_{|\be|\leq|\al|}c_{\al,\be}\sum_qh_q^{2(i-\ell)}\Sob{\phi-I^{(q)}\phi}{\dot{H}^{|\be|}(\til{Q}_q)}^2\label{est:global:gen:approach}\\
        &\leq c\sum_q\sum_{i\leq\ell}\sum_{j=1}^{k-i}h_q^{2(i+j-\ell)}\eps_{i,j}(Q_q)^2\Sob{\phi}{\dot{H}^{i+j}(\til{Q}_q)}^2\notag\\
         &= c\sum_q\sum_{i\leq\ell}\sum_{j=i+1}^{k}h_q^{2(j-\ell)}\eps_{i,j-i}(Q_q)^2\Sob{\phi}{\dot{H}^{j}(\til{Q}_q)}^2,\notag
    \end{align}
where we shifted indices to obtain the last inequality. Finally, changing the order of summation yields
    \begin{align}\label{est:global:gen:final}
         \Sob{\phi-I_{m,k}\phi}{\dot{H}^\ell}^2
         &\leq c\sum_{j=1}^{k}\sum_q\left(h_q^{2(j-\ell)}\veps_{\ell,j}(Q_q)^2\Sob{\phi}{\dot{H}^{j}(\til{Q}_q)}^2\right),
    \end{align}
which is \eqref{est:IO:Hl}. 

On the other hand, if $I_{m,k}=I_k$ interpolates optimally, then for $1\leq k'\leq k$ and $0\leq\ell\leq k'-1$, we apply \eqref{eq:IO:loc:optimal} in \eqref{est:global:gen:approach}, then \ref{item:P4} and \cref{lem:pou}, as before, to obtain
    \begin{align}\notag
        \Sob{\phi-I_{k}\phi}{\dot{H}^\ell}^2\leq c\sum_q\left(\sum_{i\leq\ell}\eps_{i,k'-i}(Q_q)^2\right)h_q^{2(k'-\ell)}\Sob{\phi}{\dot{H}^{k'}(\til{Q}_q)}^2,
    \end{align}
which is \eqref{est:IO:optimal}, as desired.
\end{proof}

In light of \cref{prop:IO:global}, we may define the following terminology.

\begin{Def}\label{def:IO:global:uniform}
 Let $I_{m,k}$ be an $(\II,\Psi)_\QQ$--subordinate global I.O.O. such that $\II$ is $(m,k)$--generic. We say that $I_{m,k}$ is \textbf{$\QQ$--uniform} if $\II$ is a $\QQ$--uniform family. If $I_{m,k}$ is $\QQ$--uniform and $\QQ$ is a uniform cover at scale $h$, then we say that $I_{m,k}$ \textbf{interpolates uniformly at scale $\mathbf{\text{h}}$}. If $I^{(q)}\in\II$ interpolates optimally over $Q_q$ at level $k$, for all $q$, then we say $I_{m,k}$ \textbf{interpolates optimally} and denote it simply as $I_k$.
\end{Def}

From \cref{def:IO:loc} and \eqref{eq:IO:global}, one also easily obtains as a corollary to \cref{prop:IO:global} and \cref{lem:multiplicity}, the following interpolation error estimates for various special cases.

\begin{Cor}\label{cor:IO:global}
Let $m,k\geq0$ be integers such that $k\geq m+1$. Let $I_{m,k}$ be an $(\II,\Psi)_\QQ$--subordinate global I.O.O, where $\II$ is $(m,k)$--generic, and $\QQ=\{Q_q\}_q$ denotes the associated covering. If $I_{m,k}$ is $\QQ$--uniform, then for all $0\leq \ell\leq m$
    \begin{align}\label{est:IO:Quniform}
        \Sob{\phi-I_{m,k}\phi}{\dot{H}^\ell}^2\leq c\sum_{j=1}^{k}\veps_{\ell,j}^2\sum_qh_q^{2(j-\ell)}\Sob{\phi}{\dot{H}^j(\til{Q}_q)}^2.    
    \end{align}
If $\QQ$ is a uniform cover at scale $h$, then for all $0\leq\ell\leq m$
    \begin{align}\label{est:IO:atscale}
        \Sob{\phi-I_{m,k}\phi}{\dot{H}^\ell}^2\leq c\sum_{j=1}^{k}h^{2(j-\ell)}\sum_q\veps_{\ell,k}(Q_q)^2\Sob{\phi}{\dot{H}^{j}(\til{Q}_q)}^2,
    \end{align}
In particular, if $I_{m,k}$ interpolates uniformly at scale $h$, then for all $0\leq\ell\leq m$ 
    \begin{align}\label{est:IO:regularly}
   \Sob{\phi-I_{m,k}\phi}{\dot{H}^\ell}^2\leq c\sum_{j=1}^{k}h^{2(j-\ell)}\Sob{\phi}{\dot{H}^{j}}^2;
    \end{align}
if, additionally, $I_{m,k}=I_{k}$ interpolates optimally, then for all $0\leq\ell\leq k'-1$ and $1\leq k'\leq k$
    \begin{align}\label{est:IO:regularly:optimally}
        \Sob{\phi-I_{k}}{\dot{H}^\ell}^2\leq ch^{2(k'-\ell)}\Sob{\phi}{\dot{H}^{k'}}^2.
    \end{align}
\end{Cor}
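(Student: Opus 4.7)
The plan is to derive all four inequalities in the corollary as essentially immediate consequences of \cref{prop:IO:global} combined with \cref{lem:multiplicity}, with the four special cases corresponding to successively stronger structural assumptions on the family $\II$ and the covering $\QQ$.

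For \eqref{est:IO:Quniform}, I would begin from \eqref{est:IO:Hl}, namely
\eq{\Sob{\phi-I_{m,k}\phi}{\dot{H}^\ell}^2 \leq c\sum_{j=1}^{k}\sum_q \veps_{\ell,j}(Q_q)^2 h_q^{2(j-\ell)}\Sob{\phi}{\dot{H}^j(\til{Q}_q)}^2. \notag}
The $\QQ$-uniformity hypothesis means $\sup_q\eps_{i,j}(Q_q)<\infty$ for each $i,j$; because $\veps_{\ell,j}(Q_q)^2$ is given by \eqref{def:IO:const:gen} as a fixed finite sum of $\eps_{i,j-i}(Q_q)^2$, this supremum transfers to a uniform bound $\veps_{\ell,j}:=\sup_q\veps_{\ell,j}(Q_q)<\infty$, and factoring $\veps_{\ell,j}^2$ out of the $q$-sum yields \eqref{est:IO:Quniform}.

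For \eqref{est:IO:atscale}, I would again start from \eqref{est:IO:Hl}, but now exploit the $\delta$-adic hypothesis in the definition of a uniform cover at scale $h$: since $\delta h\leq h_q\leq \delta^{-1}h$, one has $h_q^{2(j-\ell)}\leq \delta^{-2|j-\ell|}h^{2(j-\ell)}$, and absorbing $\delta^{-2|j-\ell|}$ into the constant $c$ directly produces \eqref{est:IO:atscale}. Combining both simplifications (uniform constants and uniform scale) gives
\eq{\Sob{\phi-I_{m,k}\phi}{\dot{H}^\ell}^2\leq c\sum_{j=1}^{k}\veps_{\ell,j}^2 h^{2(j-\ell)}\sum_q\Sob{\phi}{\dot{H}^j(\til{Q}_q)}^2,\notag}
at which point \eqref{est:IO:regularly} follows by applying \cref{lem:multiplicity} to each $\phi_j(x):=\sum_{|\al|=j}|\bdy^\al \phi(x)|^2$, giving $\sum_q\Sob{\phi}{\dot{H}^j(\til{Q}_q)}^2\leq \pi_0\Sob{\phi}{\dot{H}^j}^2$, and absorbing $\pi_0$ into $c$.

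Finally, for \eqref{est:IO:regularly:optimally}, the same strategy applies but starting from the optimal estimate \eqref{est:IO:optimal} instead of \eqref{est:IO:Hl}: under the uniform-at-scale-$h$ and optimality assumptions, the right-hand side of \eqref{est:IO:optimal} reduces, via the $\delta$-adic bound and $\QQ$-uniformity of the $\veps_{\ell,k'}(Q_q)$, to $ch^{2(k'-\ell)}\sum_q\Sob{\phi}{\dot{H}^{k'}(\til{Q}_q)}^2$, and one more application of \cref{lem:multiplicity} collapses the local-norm sum into $\Sob{\phi}{\dot{H}^{k'}}^2$. There is no real obstacle here; the entire proof is essentially bookkeeping, and the one place requiring care is to verify that the constants in \cref{def:IO:const:gen} and \cref{def:IO:const:optimal} inherit the uniformity from $\II$, so that the supremum can legitimately be pulled outside the sum over $q$.
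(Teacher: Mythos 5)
Your proof is correct and follows the same route the paper indicates (specializing \cref{prop:IO:global} under $\QQ$-uniformity and the uniform-cover hypothesis, then collapsing the $q$-sums with \cref{lem:multiplicity}); the paper does not spell out the details, but this is exactly what is meant by ``one also easily obtains as a corollary to \cref{prop:IO:global} and \cref{lem:multiplicity}.'' The only wrinkle is that \eqref{est:IO:atscale} as printed has $\veps_{\ell,k}(Q_q)^2$ where the derivation naturally yields $\veps_{\ell,j}(Q_q)^2$, which is almost certainly a typo in the paper rather than a gap in your argument.
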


Lastly, from \cref{prop:IO:global}, we also immediately deduce the following boundedness property of global I.O.O.'s.

\begin{Cor}\label{cor:IO:global:bdd}
Let $m,k\geq0$ such that $k\geq m+1$ and $I_{m,k}$ be an $(\II,\Psi)_\QQ$--subordinate global I.O.O. If $I_{m,k}$ is $\QQ$--uniform, then there exists a constant $c>0$ such that
    \begin{align}\label{eq:IO:global:bdd}
            \Sob{I_{m,k}\phi}{\dot{H}^\ell}\leq c\Sob{\phi}{\dot{H}^{k}},\quad \ell=0,1.
    \end{align}
If, moreover, $I_{m,k}$ interpolates uniformly at scale $h$, then
    \begin{align}\label{eq:IO:global:bdd:uniform}
        \Sob{I_{m,k}\phi}{\dot{H}^\ell}\leq ch^{-\ell}\begin{cases}h\Sob{\phi}{\dot{H}^{k}},&2\leq\ell\leq m\\
        \Sob{\phi}{\dot{H}^k},&m<\ell\leq k.
        \end{cases}
    \end{align}
where $c$ is independent of $\QQ$. In particular, if $I_{m,k}$ interpolates uniformly at scale $h$, then $I_{m,k}:\dot{H}^{k}\goesto\dot{H}^{k}$, is a bounded operator, for all $k\geq m+1$, where $m\geq0$.
\end{Cor}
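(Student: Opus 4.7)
The plan is to split the bound via the triangle inequality
$$\|I_{m,k}\phi\|_{\dot{H}^\ell} \leq \|\phi - I_{m,k}\phi\|_{\dot{H}^\ell} + \|\phi\|_{\dot{H}^\ell}$$
whenever $\ell$ lies in the approximation range $0\leq\ell\leq m$, and to estimate $I_{m,k}\phi$ directly via its partition-of-unity representation when $\ell$ lies outside that range. In every case, Poincar\'e's inequality on $\T^2$ is used at the end to collapse intermediate homogeneous norms of $\phi$ into $\|\phi\|_{\dot{H}^k}$.

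\textbf{First bound \eqref{eq:IO:global:bdd}.} For $\ell\in\{0,1\}$ under the $\QQ$--uniform hypothesis, I would apply \eqref{est:IO:Quniform} directly to the approximation error. The factors $h_q^{2(j-\ell)}$ are controlled by $\diam(\T^2)^{2(j-\ell)}$, the overlap sum $\sum_q\|\phi\|_{\dot{H}^j(\til{Q}_q)}^2$ is consolidated into the global norm $\|\phi\|_{\dot{H}^j}^2$ via \cref{lem:multiplicity}, and Poincar\'e dominates $\|\phi\|_{\dot{H}^j}\leq c\|\phi\|_{\dot{H}^k}$ for $0\leq j\leq k$. The same chain of inequalities bounds $\|\phi\|_{\dot{H}^\ell}$.

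\textbf{Second bound, range $2\leq\ell\leq m$.} The same triangle-inequality strategy applies, now using \eqref{est:IO:regularly}. Assuming $h\leq 1$ (the statement is otherwise trivial for a universal constant), the sum $\sum_{j=1}^{k}h^{2(j-\ell)}\|\phi\|_{\dot{H}^j}^2$ is dominated by the $j=1$ term $h^{2(1-\ell)}\|\phi\|_{\dot{H}^1}^2$, and by Poincar\'e one has $\|\phi\|_{\dot{H}^\ell}\leq c\|\phi\|_{\dot{H}^k}\leq ch^{1-\ell}\|\phi\|_{\dot{H}^k}$, producing the advertised $ch^{1-\ell}\|\phi\|_{\dot{H}^k}$ bound.

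\textbf{Second bound, range $m<\ell\leq k$.} Here \eqref{est:IO:regularly} does not apply directly, so I would differentiate the decomposition $I_{m,k}\phi=\sum_q\psi_q I^{(q)}\phi$ via the Leibniz rule, bound each $|\bdy^\be\psi_q|\leq c h_q^{-|\be|}$ by \ref{item:P4}, square, integrate, and apply \cref{lem:multiplicity} to obtain
$$\|I_{m,k}\phi\|_{\dot{H}^\ell}^2 \leq c\sum_q\sum_{i=0}^{\ell}h_q^{-2(\ell-i)}\|I^{(q)}\phi\|_{\dot{H}^i(\til{Q}_q)}^2.$$
For $0\leq i\leq m$, the local norm is controlled by triangle inequality together with the local approximation estimate \eqref{eq:IO:loc:approx}. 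For $m<i\leq\ell$, I would invoke the inverse inequality \eqref{eq:IO:loc:bdd} to trade a factor $h_q^{m-i}$ for a reduction to $\|I^{(q)}\phi\|_{\dot{H}^m(\til{Q}_q)}$, and then apply the previous step. Summing over $q$, consolidating via \cref{lem:multiplicity}, and absorbing intermediate norms with Poincar\'e, yields the claimed $ch^{-\ell}\|\phi\|_{\dot{H}^k}$.

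\textbf{Main obstacle.} The subtlety lies in the range $m<\ell\leq k$, where three distinct $h$--scalings must interact coherently: the $h^{-(\ell-i)}$ from differentiating the partition of unity, the $h^{m-i}$ from the local inverse inequality when $i>m$, and the positive $h^{j}$ powers from the local approximation estimates. Verifying that the worst-case combination reproduces exactly the exponent $-\ell$, rather than a strictly weaker power, requires treating the sub-ranges $i\leq m$ and $i>m$ separately and carefully tracking all index shifts so that the contributions of $\veps_{\ell,j}$ at the extreme indices do not spoil the advertised scaling.
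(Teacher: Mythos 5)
Your proposal is correct and follows essentially the same route as the paper's proof: the triangle inequality plus the global approximation estimates \eqref{est:IO:Quniform} and \eqref{est:IO:regularly}, consolidated by \cref{lem:multiplicity} and Poincar\'e, for $0\le\ell\le m$; and the Leibniz expansion of $\sum_q\psi_q I^{(q)}\phi$ combined with the local inverse inequality \eqref{eq:IO:loc:bdd} and the local approximation bound \eqref{eq:IO:loc:approx} for $m<\ell\le k$. The only cosmetic difference is that the step converting the square of the sum over $q$ into a sum of squares is really \cref{lem:pou} (bounded overlap) rather than \cref{lem:multiplicity}, which is also exactly how the paper proceeds.
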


\begin{proof}
Suppose $0\leq \ell\leq m$. By the triangle inequality and \eqref{est:IO:Quniform}, we have
    \begin{align}
        \Sob{I_{m,k}\phi}{\dot{H}^\ell}^2&\leq 2\Sob{\phi}{\dot{H}^\ell}^2+2\Sob{\phi-I_{m,k}\phi}{\dot{H}^\ell}^2\leq c\Sob{\phi}{\dot{H}^\ell}^2+2\sum_{j=1}^{k}\veps_{\ell,j}^2\sum_qh_q^{2(j-\ell)}\Sob{\phi}{\dot{H}^j(\til{Q}_q)}^2.\label{est:bdd:Quniform}
    \end{align}
In the particular case $m\geq\ell$ and $\ell=0,1$, we may apply the fact that $0< h_q\leq2\pi$, for all $q$, \cref{lem:multiplicity}, and Poincar\'e's inequality to deduce
    \begin{align}\notag
    \Sob{I_{m,k}\phi}{\dot{H}^\ell}^2\leq c\sum_{j=1}^{k}\sum_q\Sob{\phi}{\dot{H}^j(\til{Q}_q)}^2\leq c\Sob{\phi}{\dot{H}^k}^2.
    \end{align}
This establishes \eqref{eq:IO:global:bdd}.

Now, if $I_{m,k}$ interpolates uniformly at scale $h$, then \eqref{est:bdd:Quniform}, \cref{lem:multiplicity}, and Poincar\'e's inequality imply
    \begin{align}
        \Sob{I_{m,k}\phi}{\dot{H}^\ell}^2\leq 2\Sob{\phi}{\dot{H}^\ell}^2+c\sum_{j=1}^{k}h^{2(j-\ell)}\sum_q\Sob{\phi}{\dot{H}^j(\til{Q}_q)}^2\leq ch^{2(1-\ell)}\Sob{\phi}{\dot{H}^k}^2.\notag
    \end{align}
This establishes boundedness from $\dot{H}^k\goesto\dot{H}^\ell$, for all $0\leq\ell\leq m$.

Now suppose that $m<\ell\leq k$ and $I_{m,k}$ interpolates uniformly at scale $h$. By the product rule, \cref{lem:pou} (with $\phi_q=\bdy^\gam\psi_q$, $\gam=\al-\be$, $|\al|=\ell$, $|\be|=i$), and \eqref{eq:IO:loc:approx}, we deduce
    \begin{align}
        \Sob{I_{m,k}\phi}{\dot{H}^\ell}^2
        &\leq c\sum_{i\leq \ell}\sum_qh_q^{2(i-\ell)}\Sob{I^{(q)}\phi}{\dot{H}^i(\til{Q}_q)}^2\notag\\
        &\leq ch^{-2\ell}\sum_{i\leq m}h^{2i}\sum_q\Sob{I^{(q)}\phi}{\dot{H}^i(\til{Q}_q)}^2+ch^{-2\ell} h^{2m}\sum_q\Sob{I^{(q)}\phi}{\dot{H}^m(\til{Q}_q)}^2\label{est:Imk:Hl:bdd}.
    \end{align}
For the first sum in \eqref{est:Imk:Hl:bdd}, we apply the facts that $I^{(q)}:\dot{H}^k(Q_q)\goesto\dot{H}^i(Q_q)$ boundedly, which was just proved, and that $\QQ$ is a uniform cover at scale $h$. For the second, we use the triangle inequality, \eqref{eq:IO:loc:approx}, and the fact that $I^{(q)}$ interpolates at scale $h$, for all $q$, to obtain
    \begin{align}
       \Sob{I^{(q)}\phi}{\dot{H}^m(\til{Q}_q)}^2&\leq 2\Sob{\phi}{\dot{H}^m(\til{Q}_q)}^2+2\Sob{\phi-I_{m,k}^{(q)}\phi}{\dot{H^m}(\til{Q}_q)}^2\notag\\
       &\leq 2\Sob{\phi}{\dot{H}^m(\til{Q}_q)}^2+2\sum_{j=1}^{k-m}\eps_{m,j}(Q_q)^2h^{2j}\Sob{\phi}{\dot{H}^{m+j}(\til{Q}_q)}^2.\notag
    \end{align}
We then apply $\QQ$--uniformity, sum over $q$, and apply \cref{lem:multiplicity} to deduce
    \begin{align}
        \sum_q \Sob{I^{(q)}\phi}{\dot{H}^m(\til{Q}_q)}^2\leq 2\Sob{\phi}{\dot{H}^m}^2+2\sum_{j=1}^{k-m}h^{2j}\Sob{\phi}{\dot{H}^{m+j}}^2.\notag
    \end{align}
Upon returning to the estimate of $I_{m,k}\phi$, combining the above considerations, we apply \cref{lem:multiplicity} to complete the estimate of the first sum in \eqref{est:Imk:Hl:bdd}, the fact that $h\leq 2\pi$, and Poincar\'e's inequality to finally arrive at
    \begin{align}
    \Sob{I_{m,k}\phi}{\dot{H}^\ell}^2&\leq ch^{2(1-\ell)}\Sob{\phi}{\dot{H}^k}^2+ch^{2(m-\ell)}\Sob{\phi}{\dot{H}^m}^2+ch^{2(m-\ell)}\sum_{j=1}^{k-m}h^{2j}\Sob{\phi}{\dot{H}^{m+j}}^2\notag\\
    &\leq ch^{-\ell}\Sob{\phi}{\dot{H}^k}^2,\notag
    \end{align}
as desired.
\end{proof}

\begin{Rmk}\label{rmk:const}
The universal constants appearing in the estimates in the proofs of \cref{prop:IO:global}, \cref{cor:IO:global}, and \cref{cor:IO:global:bdd} above depend additionally on $\ell,k,m$, and $\Psi$ through properties \ref{item:P1}, \ref{item:P2}, \ref{item:P4}, and \ref{item:P5}. In particular, they are always independent of the diameters associated to the covering.
\end{Rmk}

\subsection{Examples of Globalizable Local Interpolant Observable Operators}\label{sect:examples}

In this section we provide examples of local I.O.O.'s in the sense of \cref{def:IO:loc}, as well as their corresponding globalized counterparts in the sense of \eqref{eq:IO:global}. We only present a small sample of examples of immediate relevence to the context of Data Assimilation, e.g. nodal values or local averages of velocity, but remark that several other examples exist which accommodate other forms of data, e.g., nodal values or local averages of derivatives of the velocity, boundary flux data, etc. We refer the reader to \cite{ErnGuermondBook, BrennerScottBook, BrownThesis} for these additional examples. 

\subsubsection{Spectral observables} Let $Q=[a,b]^2$, where $0\leq a< b\leq 2\pi$ such that $2\pi h=b-a$, where $0<h\leq2\pi$. Then, given $N>0$, let $I^Q\phi(x)=\sum_{\bk\in\kap_h\Z^2}\chi_{B_N}(\bk)\hat{\phi}_{\bk}e^{i\bk\cdotp x}$, where $\kap_h=2\pi h^{-1}$, $\chi_{B_N}$ is the indicator function of the ball, $B_N$, of radius $N$ in $\Z^2$, centered at the origin, and $\phi_{\bk}=\kap_h^{-1}\int_Q\phi(x)e^{-i\mathbf{k}\cdotp x}dx$ denotes the Fourier coefficient of $\phi$ at wavenumber $\bk$ such that $\overline{{\phi}}_{\bk}={\phi}_{-\bk}$. Then $I^Q:{H}^{m}_{per}(Q)\goesto{H}_{per}^{k}(Q)$, for all $k\geq m\geq0$. In particular, for any $N>0$, $I^Q$ is a $Q$-local I.O.O. of all orders $m$ and at all levels $k$ with $0\leq m\leq k$ that interpolates optimally.
   
\subsubsection{Piecewise constant interpolation}\label{sect:constant} Let $Q\subset\Om$ be a bounded, open, connected subset of diameter $h>0$ such that $ch^2\leq |Q|\leq c'h^2$, for some constants $c,c'>0$. Given $\phi\in H^2(Q)$ and $x_Q\in Q$, let $T_0^{x_Q}\phi(x)=\phi(x_Q)$ define the constant function, where $\phi_Q=\phi(x_Q)$.  It was shown in \cite{AzouaniOlsonTiti2014} that $I_{0,2}^{Q}=T_0^{x_Q}$ is a $Q$--local I.O.O. of order $0$ at level $2$. In particular, one has
    \begin{align}\label{est:constant:LIO}
        \Sob{\phi-\phi(x_Q)}{L^2(Q)}^2\leq \sum_{1\leq |\al|\leq2}c_{\al}h^{2|\al|}\Sob{\bdy^\al\phi}{L^2(Q)}^2.
    \end{align}
 In light of \eqref{eq:IO:loc:approx}, we see that we may take $\eps_{0,j}(Q)^2=\sup_{|\al|=j}c_\al h^{2(|\al|+1)}$, for $j=1,2$. 
 
 Similarly, for any $x_Q\in Q$, if one defines $S_0^{x_Q}$ by $S_0^{x_Q}\phi(x)=\phi_Q$, where $\phi_Q=|Q|^{-1}\int_Q\phi(y)dy$, i.e., the so-called ``volume elements interpolant," then it was also shown in \cite{AzouaniOlsonTiti2014} that $I^Q=S_0^{x_Q}$ is a $Q$-local I.O.O. of order $0$ at level $1$ and hence, interpolates optimally. In particular,
    \begin{align}\label{est:volelt:LIO}
        \Sob{\phi-\phi_Q}{L^2(Q)}^2\leq ch^2\Sob{\nabla\phi}{L^2(Q)}^2,
    \end{align}
 for some constant $c>0$, independent of $h$; observe that $\eps_{0,1}(Q)=ch$.

\subsubsection{Taylor polynomials} Let $Q\subset\Om$ be a star-shaped bounded, open, connected subset of diameter $h>0$. Given $\phi\in H^3(Q)$ and $x_Q\in Q$ such that $|x-x_Q|\leq h$, for all $x\in Q$, let $T_1\phi(\cdotp;x_Q)$ denote the first-order Taylor polynomial of $\phi$ centered at $x_Q$. In particular
    \begin{align}\label{def:taylor:first:order}
        T_1^{x_Q}\phi(x):=T_1\phi(x;x_Q)=\phi(x_Q)+\nabla\phi(x_Q)\cdotp(x-x_Q).
    \end{align}
Then we have
    \begin{align}\label{est:taylor:LIO}
         \Sob{\phi-T_1^{x_Q}\phi}{L^2(Q)}^2\leq \sum_{2\leq|\al|\leq 3}c_{\al}h^{2|\al|}\Sob{\bdy^\al\phi}{L^2(Q)}^2.
    \end{align}
This is an elementary extension of the corresponding fact for constant interpolation proved in \cite{JonesTiti1992a, AzouaniOlsonTiti2014} in dimension $d=2$; the details are provided in \cref{sect:app:taylor}, where it is established in the greater generality of dimension $d\geq2$. Moreover, observe that
    \[
        \nabla T_1^{x_Q}\phi=\nabla\phi(x_Q)=T_0^{x_Q}\nabla\phi,
    \]
which implies
    \begin{align}\label{est:taylor:LIO:order1}
        \Sob{\phi-T_1^{x_Q}\phi}{\dot{H}^1(Q)}^2 \leq \sum_{1\leq |\al|\leq 2}c_\al' h^{2|\al|}\Sob{\bdy^\al\phi}{\dot{H}^1(Q)}^2.
    \end{align}
In particular, \eqref{est:taylor:LIO} and \eqref{est:taylor:LIO:order1} implies that \eqref{eq:IO:loc:approx} holds for $0\leq\ell\leq1$ and $k=3$, so that $I^Q=T_1^{x_Q}$ is a $Q$-local I.O.O. or order $1$ at level $3$.
Clearly $T_1^{x_Q}$ is a higher-order variant of the nodal value interpolant mentioned in the previous example. Indeed, from this point of view, $\phi(x_Q)$ simply represents the zeroth-order Taylor polynomial of $\phi$ centered at $x_Q$.

\subsubsection{Sobolev polynomials}
There are obvious shortcomings to using the Taylor polynomial as a means to interpolate nodal observations in the context of data assimilation, specifically since it requires one to make observations on derivatives of $\phi$ at given nodes. One may slightly relax this requirement by replacing nodal values of the derivatives with their spatial averages. This was done in the zeroth order case in \cref{sect:constant}, above, by replacing $\phi(x_Q)$ by $|Q|^{-1}\int_Q\phi(x)dx$ . The study of such polynomials of any order is classical and was introduced by Sobolev in \cite{SobolevMono1963}. We recall their properties here following the treatment in \cite{BrennerScottBook}. The reader is also referred to \cite{DupontScott1980}.

Let $Q\subset \Om$ be a ball of radius $h$ with center $x_Q\in\Om$. For $k\geq1$, denote the Taylor polynomial of order $k$ centered at $x_Q$ of $\phi\in C^k(\Om)$ by $T_k\phi(\cdotp;x_Q)$, so that
    \begin{align}\label{def:taylor:gen}
        T_k^{x_Q}\phi(x):=T_k\phi(x;x_Q)=\sum_{|\al|\leq k}\frac{\bdy^\al\phi(x_Q)}{\al!}(x-x_Q)^\al,
    \end{align}
where $\al\in\\pi_0^2$ is a multi-index. Fix $\til{\psi}\in C^\infty(\Om)$ to be a radial, non-negative function such that $\til{\psi}(x)=1$ when $|x|<1/2$, $\til{\psi}(x)=0$ for $|x|\geq 1$, and $\Sob{\til{\psi}}{L^1(\R^2)}=1$. Then $\{\til{\psi}_h\}_{h>0}$ is a standard mollifier, where $\til{\psi}_h(x)= h^{-2}\til{\psi}(xh^{-1})$. For $\phi\in H^{k+1}(Q)$, the corresponding $\til{\psi}_h$--averaged Taylor polynomial about $x_Q$ is then given by
    \begin{align}\label{def:taylor:gen:avg}
        S_k^{x_Q}\phi(x):=S_k\phi(x;x_Q)=\sum_{|\al|\leq k}\frac{1}{\al!}\int_{\Om}\bdy^\al\phi(y)(x-y)^\al\til{\psi}_h(y-x_Q)dy.
    \end{align}
Then (see \cite[Lemma 4.3.8]{BrennerScottBook}) for all $0\leq\ell\leq k$
    \begin{align}\label{eq:Sob:poly}
        \Sob{\phi-S_k^{x_Q}\phi}{\dot{H}^\ell(Q)}\leq c_{\ell,k}h^{k+1-\ell}\Sob{\phi}{\dot{H}^{k+1}(Q)},
    \end{align}
for some absolute constant $c_{\ell,k}$. Hence, $I^Q=S_k^{x_Q}$ is $Q$--local I.O.O. of order $k$ at level $k+1$.

\subsubsection{Lagrange polynomial}\label{ex:lagrange} 
In the context of data assimilation for the 2D NSE where it is preferable and more reasonable that velocity measurements at nodal points are collected rather than (spatial) derivatives of velocity. A class of interpolants that leverage nodal values of a function to reconstruct higher-order features of the function are Lagrange polynomials. We define them here in a configuration that fits our setting suitably, but point out that more flexibility is allowed in general, for instance, in the arrangement of the prescribed nodes. We refer the reader to \cite{BrennerScottBook} for additional details.

Let $Q\subset\Om$ be an open square such that $|Q|=h^2$. For $k\geq1$, let $\Gam_k=\{0,\dots, k\}^2$ and $\NN_Q=\{z^\gam\}_{\gam\in\Gam_k}$, where the points, $z^\gam=(z^\gam_1,z^\gam_2)$, are equally spaced nodes in $Q$. Let
    \begin{align}
        \PP_{k}&:=\{p|_{Q}: p=\sum_jp_j(x_1)q_j(x_2)\ \text{polynomial},\ \deg{p_j},\deg{q_j}\leq k\}.\label{def:poly:space}\\
        \Sigma_k &:= \{ \sigma_\gam\in (C^0(Q))':\s_\gam(f)=f(z^\gam),\ z^\gam\in\NN_{Q}\}\label{def:eval}.
    \end{align}
Note that $\Sigma_k$ represents the dual basis of $\PP_k$ and that $\dim\PP_k=|\Sigma_k|=|\NN_{Q}|=(k+1)^2$. Let $\Tht_k$ denote the basis of $\PP_k$ and represent its elements, $\tht_\gam$, by tensor products of one-dimensional polynomials as
    \begin{align}\label{def:basis}
\tht_\gam(x)=\prod_{\substack{\gam'\in\Gam_k \\ \gam_1 \neq \gam_1' \\ \gam_2\neq \gam_2'}} \frac{(x_1 - z^{\gam'}_1)( x_2 - z^{\gam'}_2)}{(z^{\gam}_1 - z^{\gam'}_1)(z^{\gam}_1 - z^{\gam'}_2)}.
    \end{align}
Finally, we define the operator $L^Q_k:H^{k+1}(Q) \to \PP_{k}$ by 
    \begin{align}\label{def:lagrange}
L^Q_k\phi(x) = \sum_{\gam \in \Gam_n}\theta_\gam(x)\sigma_\gam(\phi).
    \end{align}
Then (see \cite[Theorem 4.6.11]{BrennerScottBook}) for all $0\leq \ell \leq k'-1$ and $1\leq k'\leq k$, there exists a constant $c_{\ell,k} > 0$, independent of $\phi$ and $h$, such that
    \begin{align}\label{est:lagrange}
\Sob{\phi-L^{Q}_{k} \phi}{H^{\ell}(Q)} &\leq c_{\ell,k'} h^{k'+1-\ell} \Sob{\nabla\phi}{\dot{H}^{k'}(Q)}.
    \end{align}
Hence $I^Q=L_k^Q$ defines a $Q$-local I.O.O. of order $k'$ at level $k'+1$, for all $0\leq k'\leq k$. In fact, $L_k^Q$ interpolates optimally.

\subsubsection{Volume element polynomials}
Spatial averages of the velocity field constitute another class of physical observations. This type of data is used in the finite volume method to approximate true solutions with piecewise constant functions in the $L^2$-topology. They may also be used to construct higher-order polynomial approximations with similar error bounds to the Lagrange polynomial interpolants in higher-order Sobolev topologies.

Let $Q\subset\Om$, $\Gam_k$, $\NN_Q$, $\PP_{k}$ be as in \cref{ex:lagrange}. We define functionals given by integration on square patches within $Q$ as follows. Let
    \begin{align}\notag
        \begin{split}
\mathcal{S}_{k,Q} &:= \left\{ \mathcal{Q}^\gam = z^\gam + [0,\tfrac{h}{k}]^2 : z^\gam\in\NN_{Q}     \right\} \\
\Pi_k &:= \left\{ \pi_\gam\in
(L^1_{\text{loc}}(Q))':\pi_\gam(f)=|\mathcal{Q}^\gam|^{-1}\int_{\mathcal{Q}^\gam} f(x) dx,\ z^\gam\in\NN_{Q}\right\}.
\end{split}
    \end{align}
In \cref{sect:app:vol}, we establish that $\Pi_k$ determines basis for the dual space of $\PP_k$ and subsequently describe an explicit construction of the corresponding basis of $\PP_k$, which we denote by $\Xi_k = \{\xi_\gam\}_{\gam \in \Gam_k}$. Using this pair of bases we may define a projection operator $V^Q_k: H^{k+1}(Q)\to\PP_k$ by
\begin{align}\notag
V^Q_k\phi(x) = \sum_{\gam\in\Gam_k} \xi_\gam(x) \pi_\gam(\phi).
\end{align}
The unisolvence of the polynomial space with respect to the functionals, along with a similar argument to that for \eqref{est:lagrange} (see, for instance, \cite[Theorem 4.4.4]{BrennerScottBook}) gives the following bound: for all $0\leq \ell \leq k'-1$ and $1\leq k'\leq k$, there exists a constant $c_{\ell,k'} > 0$, independent of $\phi$ and $h$, such that
    \begin{align}\label{est:vol}
    \Sob{\phi-V^{Q}_{k} \phi}{\dot{H}^{\ell}(Q)} &\leq c_{\ell,k'} h^{k'+1-\ell} \Sob{\nabla\phi}{\dot{H}^{k'}(Q)}.
        \end{align}
Then $I^Q=V_k^Q$ defines a $Q$-local I.O.O. of order $k'$ at level $k'+1$, for all $0\leq k'\leq k$. In particular, $V_k^Q$ interpolates optimally.

\subsubsection{Hybrid interpolation}

Let $\QQ=\{Q_q\}_q$ be a covering of $\Om$ by bounded, open, connected subsets such that $h_q=\diam(Q_q)$. Given any $(\II,\Psi)_\QQ$--subordinate family, \cref{prop:IO:global} ensures that an estimate of the form \eqref{est:IO:Hl} holds. In particular, $\II$ may be any family comprising of any combination of the operators from above. Four possible categories of such combinations are given by the following.

\begin{itemize}
    \item \textit{Repeated-type, Uniform.} The $I^{(q)}$ are all of the same type of interpolating operator, e.g., all Taylor, all Sobolev, all Lagrange, etc., and there exist $m=m_q$ and $k=k_q$ for all $q$. The examples of operators considered in \cite{AzouaniOlsonTiti2014} belong strictly to this class;
    \item \textit{Repeated-type, Non-uniform.} The $I^{(q)}$ are all of the same type, but $m_q, k_q$ are allowed to vary. In this case, the induced global I.O.O. would be given by $I_{m,k}$, where $m=\inf_qm_q$ and $k=\sup_qk_q$ 
    (see \cref{rmk:IO:loc:convention});
    \item \textit{Hybrid-type, Uniform.} The $I^{(q)}$ consists of different types , but $m_q$, $k_q$  are constant in $q$;
    \item \textit{Hybrid-type, Non-uniform.} The $I^{(q)}$ consist of different types, but $m_q,k_q$ are allowed to vary in $q$.
\end{itemize}


\section{Statements of Main Results}\label{sect:statements}

We formally state our main results here. The first of the three main theorems provides estimates for the radius of the absorbing ball in $H^k$ for $k\geq 2$. In particular, we properly generalize the bounds in \cref{prop:wp:nse} to all higher orders of Sobolev regularity. Indeed, to establish the desired higher-order convergence between the nudged solution and true solution, we will make crucial use of the a priori bounds available for the true solution when initialized in the absorbing ball with respect to a Sobolev topology of arbitrary positive degree.

\begin{Thm}\label{thm:abs:ball:Hk}
Let $\gam,p\geq0$. Given $f\in \dot{H}^{k-1}_\s$, for some $k\geq2$, let $\s_k$ denote its $k$--th shape factor defined in \eqref{def:shape}. Let $u$ denote the unique, global strong solution of  \eqref{eq:nse:proj:hyper} corresponding to a bounded set of initial data $u_0\in \dot{H}_\s^k$. Then
    \begin{align}\label{eq:abs:ball:Hk}
           \frac{\Sob{u(t)}{\dot{H}^k}}{\nu}\leq&c_k\left(\s_{k-1}^{1/k}+G\right)^{k-1}G,
    \end{align}
holds for some universal constant $c_k>0$, for all $t\geq t_0$, for some $t_0$ sufficiently large, depending only on the diameter of the bounded set.
\end{Thm}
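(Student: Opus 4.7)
The plan is to proceed by strong induction on $k\geq 2$. The base case $k=2$ is precisely \eqref{est:palinstrophy} of \cref{prop:wp:nse}. For the induction step, assume \eqref{eq:abs:ball:Hk} holds at levels $2,3,\dots,k-1$ and derive it at level $k$. I would test \eqref{eq:nse:proj:hyper} against $(-\De)^k u$ to obtain the energy identity
\begin{align*}
\frac{1}{2}\frac{d}{dt}\Sob{u}{\dot{H}^k}^2 + \nu\Sob{u}{\dot{H}^{k+1}}^2 + \gam\Sob{u}{\dot{H}^{k+1+p}}^2 = -\langle P_\s(u\cdotp\nabla)u,(-\De)^k u\rangle + \langle P_\s f,(-\De)^k u\rangle.
\end{align*}
The linear forcing term is handled by Cauchy--Schwarz and Young's inequality after redistributing derivatives symmetrically, yielding a bound by $\tfrac{\nu}{8}\Sob{u}{\dot{H}^{k+1}}^2 + C\nu^{-1}\Sob{P_\s f}{\dot{H}^{k-1}}^2$, where the second term equals $C\nu^{-1}\s_{k-1}^2\Sob{P_\s f}{L^2}^2 = C\nu^3\s_{k-1}^2 G^2$; this is the only source of $\s_{k-1}$ on the right-hand side.

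The core estimate is the trilinear contribution, which after expansion via the Leibniz rule reads
\begin{align*}
\sum_{|\al|=k}\langle\bdy^\al[(u\cdotp\nabla)u], \bdy^\al u\rangle = \sum_{|\al|=k}\sum_{0<\be\leq\al} c_{\al,\be}\langle (\bdy^{\be}u\cdotp\nabla)\bdy^{\al-\be} u,\bdy^\al u\rangle,
\end{align*}
since the $\be=0$ term vanishes by the divergence-free condition. I would control each commutator term via H\"older's inequality and 2D Gagliardo--Nirenberg interpolation, writing each factor as a power of $\Sob{u}{\dot{H}^j}$ so that only $\Sob{u}{\dot{H}^k}$ and $\Sob{u}{\dot{H}^{k+1}}$ appear as ``top-order'' quantities while intermediate $\Sob{u}{\dot{H}^j}$ for $j\leq k-1$ are ultimately replaced by their absorbing ball radii from the induction hypothesis. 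A Young's inequality then peels off the $\Sob{u}{\dot{H}^{k+1}}$ factor, absorbing it into the viscous dissipation and producing an inequality of the form
\begin{align*}
\frac{d}{dt}\Sob{u}{\dot{H}^k}^2 + \nu\Sob{u}{\dot{H}^{k+1}}^2 \leq C\nu^{-1}\Phi_{k-1}(t)\Sob{u}{\dot{H}^k}^2 + C\nu^3 \s_{k-1}^2 G^2,
\end{align*}
where $\Phi_{k-1}(t)$ is a polynomial combination of lower-order norms controlled by the inductive bounds.

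To conclude, I would apply the uniform Gronwall lemma: the time average of $\Phi_{k-1}$ on unit intervals $[t,t+1]$ is bounded via the induction hypothesis by a fixed constant depending on $\nu,G,\s_{k-1}$, while the time average of $\Sob{u}{\dot{H}^k}^2$ on $[t,t+1]$ is controlled for $t$ large by integrating the level-$(k-1)$ energy inequality over one unit of time. Combining the three ingredients of the uniform Gronwall lemma yields \eqref{eq:abs:ball:Hk} after a finite absorbing time $t_0$.

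The main obstacle I anticipate is the careful bookkeeping required to recover the sharp scaling $\left(\s_{k-1}^{1/k}+G\right)^{k-1}G$. The unusual appearance of the root $\s_{k-1}^{1/k}$ indicates that the optimal Young's exponents at each inductive step must be chosen so that the forcing contribution $\s_{k-1}^2$, after being combined with lower-order absorbing radii via Gronwall and raised to an appropriate root, precisely produces the $1/k$ power; simultaneously, one must ensure that the $\Phi_{k-1}$ term arising from the nonlinearity does not introduce any additional factor of $\s_{k-1}$, since the inductive estimate at level $k-1$ only involves $\s_{k-2}$. Getting the exponents to balance at every level is the delicate, technical heart of the argument.
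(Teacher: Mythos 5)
Your proposal is correct that one can establish \emph{an} $\dot{H}^k$ absorbing ball by induction and the uniform Gronwall lemma, but this route cannot recover the polynomial scaling $\left(\s_{k-1}^{1/k}+G\right)^{k-1}G$ that the theorem actually claims, and this is a genuine gap, not just a bookkeeping nuisance. The issue is structural: in your inequality
\[
\frac{d}{dt}\Sob{u}{\dot{H}^k}^2 + \nu\Sob{u}{\dot{H}^{k+1}}^2 \leq C\nu^{-1}\Phi_{k-1}(t)\Sob{u}{\dot{H}^k}^2 + C\nu^3\s_{k-1}^2 G^2,
\]
the uniform Gronwall lemma delivers a bound whose third factor is $\exp\left(C\nu^{-1}\int_t^{t+1}\Phi_{k-1}\,ds\right)$. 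Since the time average of $\Phi_{k-1}$ is controlled by the lower-order absorbing-ball radii, which are themselves polynomial in $G$ and $\s$, you end up with a bound that is \emph{exponential} in $G$. No choice of Young exponents will undo this; the exponential is intrinsic to the Gronwall step. This is precisely why the pre-2005 $H^2$ absorbing-ball estimates had exponential constants and why the bound you cite from \cref{prop:wp:nse} as your base case (due to Dascaliuc--Foias--Jolly) is \emph{not} derived by Gronwall either.

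The paper avoids Gronwall entirely by a maximum principle on the global attractor. Since the attractor $\mathcal{A}\subset\dot{H}^k_\s$ is compact and invariant, one picks $u_0\in\mathcal{A}$ maximizing $\Sob{\cdot}{\dot{H}^k}$; along the backward-and-forward complete orbit through $u_0$, the function $t\mapsto\Sob{u(t)}{\dot{H}^k}^2$ attains an interior maximum at $t=0$, so the time-derivative term in the energy balance vanishes, leaving the stationary inequality
\[
\nu\Sob{\nabla u_0}{\dot{H}^k}^2 \leq |I| + |II|.
\]
This is effectively an elliptic estimate: the nonlinear term is bounded via \eqref{est:buvv:1} of \cref{lem:nlt:gen} by $c\Sob{\nabla u_0}{L^2}\Sob{\nabla u_0}{\dot{H}^k}\Sob{u_0}{\dot{H}^k}$, the forcing term by $\nu^2\s_{k-1}G\Sob{\nabla u_0}{\dot{H}^k}$, and the interpolation $\Sob{u_0}{\dot{H}^k}\leq c\Sob{\nabla u_0}{\dot{H}^k}^{(k-1)/k}\Sob{\nabla u_0}{L^2}^{1/k}$ together with Young's inequality then gives $\Sob{\nabla u_0}{\dot{H}^k}\leq c\nu(\s_{k-1}^{1/k}+G)^kG$. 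One more application of the interpolation inequality yields the stated bound. Crucially, no Gronwall and hence no exponential ever appears; the $1/k$ root of $\s_{k-1}$ emerges directly from the interpolation exponents, and the bound holds for all $t\geq t_0$ because $\mathcal{A}$ uniformly attracts bounded sets of $\dot{H}^k_\s$. You should replace your induction-plus-Gronwall scheme with this attractor-maximization argument; your trilinear estimates and the role of $\s_{k-1}$ are otherwise on the right track.
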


\begin{Rmk}\label{rmk:Hs}
{Note that by interpolation, one may immediately obtain absorbing ball bounds in $H^s$, for all $s>1$.}
\end{Rmk}

\noindent For the remainder of the manuscript, for each $k\geq0$, we will denote the $H^k$--absorbing ball of \eqref{eq:nse:proj:hyper} by $\B_k$ so that
    \begin{align}\label{def:abs:ball:Hk}
        \B_k=\left\{v\in\dot{H}^k_\s:\Sob{v}{\dot{H}^k}\leq c_k\left(\s_{k-1}^{1/k}+G\right)^{k-1}G\right\}.
    \end{align}

Our second theorem establishes well-posedness of the nudging-based equation in the higher-order Sobolev spaces $\dot{H}^k_\s$, where $k\geq2$, under various structural assumptions on the operator $I_{m,k}$ that detail the interplay between the system and features of the interpolation operator. 

\begin{Thm}\label{thm:wp:ng}
{Let $p,\gam\geq0$ be given such that $p=0$ if $\gam=0$}. Let $m,k$ be non-negative integers such that $1+m\leq k\leq 2+p$ and suppose that $I_{m,k}$ is an $(m,k)$--generic
$(\II,\Psi)_\QQ$--subordinate global I.O.O. Let $f\in\dot{H}^{k-1}_\s$, $u_0\in\B_m$, and $u$ be the unique, global solution of \eqref{eq:nse:proj:hyper} corresponding to $u_0,f$. 
Given any $v_0\in\dot{H}^k_{\s}$, there exists a unique $v\in C([0,T];\dot{H}^k_\s)\cap L^2(0,T;\dot{H}^{k+1})$ and $\bdy_tv\in L^2(0,T;\dot{H}^{k-1}_\s)$, which holds for all $T>0$ and satisfies \eqref{eq:nse:ng:proj:hyper} provided that 
    \begin{align}\label{cond:mu:h:B}
     c\sup_q\frac{\mu h_q^2}{\nu}\left[\veps_{0,1}(Q_q)+\veps_{0,2}(Q_q)+\chi_{(0,\infty)}(\gam)\left(\frac{\nu}{\gam}\right)\left(\frac{\mu h_q^2}{\nu}\right)\sum_{j=1}^{[p]}\veps_{0,j}(Q_q)^2h_q^{2(j-2)}\right]\leq\frac{1}{10{\pi_0}},
    \end{align}
 where $c>0$ is a universal constant, {$\pi_0$} is the constant from \ref{item:P2}, $\veps_{i,j}(Q_q)$ are the constants associated to $I_{m,k}$, and where $[p]$ denotes the greatest integer $\leq p$. {Moreover, if $I_{m,k}$ interpolates uniformly at scale $h$, then it suffices to instead assume}
   \begin{align}\label{cond:mu:h:A}
        c\frac{\mu h^2}{\nu}\left[1+\chi_{(0,\infty)}(\gam)\left(\frac{\nu}{\gam}\right)\left(\frac{\mu h^2}{\nu}\right)\sum_{j=1}^{[p]}h^{2(j-2)}\right]\leq\frac{1}{10}, 
    \end{align}
{in place of \eqref{cond:mu:h:B}}. Note that we use the convention that $\chi_{(0,\infty)}(\gam)\left(\frac{\nu}{\gam}\right)\equiv0$, when $\gam=0$.
\end{Thm}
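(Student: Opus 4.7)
The plan is to construct solutions by Galerkin truncation, derive a uniform-in-$N$ a priori bound in $\dot H^k_\sigma$ via an energy estimate, then extract a limit by Aubin--Lions compactness, and close uniqueness with a standard difference argument. Truncating to the first $N$ Fourier modes yields a locally Lipschitz ODE for $v_N$ (the finite-rank operator $I_h$ is automatically bounded on any finite-dimensional subspace), so local existence of $v_N$ is immediate; the substance of the proof is the $N$-uniform bound.

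To derive this bound, I would take the $L^2$-inner product of the equation for $v_N$ with $(-\Delta)^k v_N$, integrate by parts in the periodic mean-free setting, and exploit the decomposition $I_h v_N = v_N - (v_N - I_h v_N)$ to extract a favorable damping $-\mu\|v_N\|_{\dot H^k}^2$:
\[
\tfrac{1}{2}\tfrac{d}{dt}\|v_N\|_{\dot H^k}^2 + \nu\|v_N\|_{\dot H^{k+1}}^2 + \gamma\|v_N\|_{\dot H^{k+p+1}}^2 + \mu\|v_N\|_{\dot H^k}^2 = \mu\langle v_N - I_h v_N, (-\Delta)^k v_N\rangle + R,
\]
where $R$ collects the nonlinear flux, the forcing, and the contribution $\mu\langle I_h u, (-\Delta)^k v_N\rangle$. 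Standard 2D Sobolev--Ladyzhenskaya--Agmon estimates handle the nonlinear term; the forcing yields $\|f\|_{\dot H^{k-1}}\|v_N\|_{\dot H^{k+1}}$ absorbed into viscous dissipation; and the $I_h u$ contribution is controlled by writing $I_h u = u - (u - I_h u)$ and invoking the absorbing-ball bound in \cref{thm:abs:ball:Hk} together with the boundedness of $I_h$ in \cref{cor:IO:global:bdd}.

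The main technical step is controlling the interpolation error $\mu\langle v_N - I_h v_N, (-\Delta)^k v_N\rangle$, and this is where the conditions \eqref{cond:mu:h:B} and \eqref{cond:mu:h:A} arise. By Cauchy--Schwarz the error is dominated by $\mu\|v_N - I_h v_N\|_{L^2}\|v_N\|_{\dot H^{2k}}$, and \cref{prop:IO:global} applied at level $\ell = 0$ (which explains why only the $\veps_{0,j}$ constants appear in the final condition) yields $\|v_N - I_h v_N\|_{L^2}^2 \leq c\sum_{j=1}^k\sum_q \veps_{0,j}(Q_q)^2 h_q^{2j}\|v_N\|_{\dot H^j(\tilde Q_q)}^2$. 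The plan is to apply Young's inequality term-by-term in $j$ and distribute the dual norm $\|v_N\|_{\dot H^{2k}}$ between the viscous dissipation $\nu\|v_N\|_{\dot H^{k+1}}^2$ and the hyperviscous dissipation $\gamma\|v_N\|_{\dot H^{k+p+1}}^2$: the $j = 1, 2$ summands pair with viscous dissipation and produce the leading $\veps_{0,1}, \veps_{0,2}$ contributions in \eqref{cond:mu:h:B}, while the $j = 1, \ldots, [p]$ summands pair with hyperviscous dissipation and generate the $\chi_{(0,\infty)}(\gamma)(\nu/\gamma)(\mu h_q^2/\nu)\sum_j \veps_{0,j}^2 h_q^{2(j-2)}$ contribution. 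The uniform-scale condition \eqref{cond:mu:h:A} then follows by invoking \cref{cor:IO:global} and \cref{lem:multiplicity} in place of the general \cref{prop:IO:global} to replace subdomain-indexed constants by scale-uniform ones.

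Once the interpolation error is absorbed, Gronwall's inequality gives $v_N$ uniformly bounded in $L^\infty(0,T;\dot H^k_\sigma)\cap L^2(0,T;\dot H^{k+1})$, with $\partial_t v_N \in L^2(0,T;\dot H^{k-1}_\sigma)$ extracted from the equation; Aubin--Lions compactness delivers a limit $v$ satisfying \eqref{eq:nse:ng:proj:hyper}, and continuity in time follows from the energy identity. Uniqueness is proved by subtracting two solutions and running the same energy estimate on the difference, the $-\mu I_h$ damping structure closing the Gronwall argument under the same smallness assumption. The hard part will be the third paragraph: organizing the Young-inequality bookkeeping so that the final smallness constraint takes exactly the stated form of \eqref{cond:mu:h:B}, and in particular handling the borderline case $k = p + 2$ where $\|v_N\|_{\dot H^{2k}}$ lies one derivative beyond what either dissipation controls at unit scale and must be resolved through a dyadic frequency split assigning low wavenumbers to viscous absorption and high wavenumbers to hyperviscous absorption.
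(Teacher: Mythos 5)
There is a genuine gap, and it is precisely the one you flag but do not resolve at the end. You try to derive the smallness conditions \eqref{cond:mu:h:B} / \eqref{cond:mu:h:A} from the $\dot H^k$ energy balance, pairing the interpolation error against $(-\Delta)^k v_N$ and estimating by $\mu\|v_N - I_h v_N\|_{L^2}\|v_N\|_{\dot H^{2k}}$. The quantity $\|v_N\|_{\dot H^{2k}}$ is not controlled by the dissipation at hand: the viscous term gives $\dot H^{k+1}$ and the hyperviscous term gives $\dot H^{k+p+1}$, and when $k = p+2$ (permitted under the hypothesis $k\leq 2+p$) one has $2k = k+p+2 > k+p+1$, so $\dot H^{2k}$ lies strictly one derivative above the strongest available dissipation. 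A dyadic frequency split cannot repair this: on high frequencies $|\xi|\geq\Lambda$ the deficit $|\xi|^{2k}/|\xi|^{k+p+1} = |\xi|^{k-p-1}$ \emph{grows} (it equals $|\xi|$ at $k = p+2$), so there is no choice of $\Lambda$ that closes the estimate, and one also cannot manufacture the stated form of \eqref{cond:mu:h:B}, which involves only $\veps_{0,1}$, $\veps_{0,2}$, and hyperviscous corrections, not the $\veps_{0,j}$ for large $j$ that your accounting would produce. The appearance of $\veps_{0,j}$ in the smallness condition is a signal that the absorption is happening at the $\dot H^1$ level, not at $\dot H^k$.

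The paper's proof (\cref{lem:H1:gal:apriori:app}--\cref{cor:Hk:gal:apriori:app}) uses a two-tier structure that avoids this entirely. The $\dot H^1$ Galerkin estimate, obtained by pairing with $-\Delta v_N$, produces exactly the term $\mu\langle v_N - J_{m,k}v_N, \Delta v_N\rangle$, whose Cauchy--Schwarz bound involves the level-$0$ approximation $\|v_N - I_{m,k}v_N\|_{L^2}$ against $\|\Delta v_N\|_{L^2}$; the resulting contributions $\veps_{0,j}(Q_q)^2 h_q^{2j}\|v_N\|_{\dot H^j(\tilde Q_q)}^2$ for $j\leq 2+p$ are absorbed, term by term, into the viscous, hyperviscous, and $\mu$-damping terms under \eqref{cond:mu:h:B} or \eqref{cond:mu:h:A}, yielding a uniform-in-$N$ and uniform-in-time $\dot H^1$ bound (\cref{cor:H1:gal:apriori:app}). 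In the subsequent $\dot H^k$ estimate (\cref{lem:Hk:gal:apriori:app}), the nudging term $II^k$ is \emph{not} absorbed by a smallness hypothesis at all: it is bounded crudely via the inverse inequality $\|I_{m,k}v_N\|_{\dot H^k}\leq c h^{m-k}\|I_{m,k}v_N\|_{\dot H^m}$ and the triangle inequality, giving a coefficient $c(\mu h^{1-k}/\nu)^2$ multiplying $\|v_N\|_{\dot H^k}^2$ that feeds into Gronwall and allows exponential growth in time. This is perfectly adequate because the theorem asserts only finite-time existence of $v\in C([0,T];\dot H^k_\sigma)$, and the nonlinear contribution is controlled by inserting the already-established uniform $\dot H^1$ bound into the Gronwall integral. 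Your remaining ingredients (Galerkin, $\partial_t v_N$ bound, Aubin--Lions, uniqueness by energy difference) match the paper and are fine once the a priori estimate is restructured this way.
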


\noindent Since the analysis performed in \cite[Theorem 6]{AzouaniOlsonTiti2014} in the periodic setting can be easily extended to prove \cref{thm:wp:ng}, we relegate the proof of this theorem to \cref{sect:app:wp:ng}.

\begin{Rmk}\label{rmk:pou}
We note that since $\Om=\T^2$ is a compact manifold without boundary, property \ref{item:P2} implies that $\QQ$ is finite. For general bounded domains, however, $\QQ$ may be infinite. We refer the reader to \cref{rmk:finite} and \cref{rmk:dirichlet} for further comments.
\end{Rmk}

The third main result provides sufficient conditions on the nudging parameter, $\mu$ and the density of data, determined by $h$, in terms of the system parameters, $\nu,f$, alone that ensure convergence of the approximating signal, as determined by the nudging-based system, to the true signal, as represented by a solution to \eqref{eq:nse:proj:hyper}, in higher-order Sobolev topologies, provided that the observables are interpolated in a suitable manner. In particular, we assume that the observables are interpolated using a sufficiently nice global I.O.O. in the sense of \eqref{eq:IO:global}.

\begin{Thm}\label{thm:sync:Hk}
{Let $p,\gam\geq0$ be given such that $p=0$ if $\gam=0$}. Let $m\geq0$, $k\geq2$ be integers such that $1+m\leq k\leq 2+p$. Let $I_{m,k}$ be an $(m,k)$--generic
$(\II,\Psi)_\QQ$--subordinate global I.O.O. Let $f\in\dot{H}^{k-1}_\s$, $u_0\in\B_m$, and $u$ be the unique, global solution of \eqref{eq:nse:proj:hyper} corresponding to $u_0,f$. Suppose that $\mu,h$ satisfy \eqref{cond:mu:wp:H1} and let $v$ denote the unique, global solution of \eqref{eq:nse:ng:proj:hyper} corresponding to $v_0\in\dot{H}^m_\s$. Then there exists a constant $0<c_0<1$ such that
    \begin{align}\label{eq:sync:gen}
        \Sob{v(t)-u(t)}{\dot{H}^{m}}\leq O(e^{-(\mu/2) t}),
    \end{align}
where $[p]$ denotes the greatest integer $\leq p$, provided that $\mu, h$ additionally satisfy
    \begin{align}\label{cond:mu:h:Hk:gen}
     c\sup_{0\leq i\leq m}\sup_q\frac{\mu h_q^2}{\nu}\left[\veps_{i,1}(Q_q)+\veps_{i,2}(Q_q)+\chi_{(0,\infty)}(\gam)\left(\frac{\mu }{\gam}\right)\sum_{j=1}^{[p]}\veps_{i,j}(Q_q)^2h_q^{2(j-1)}\right]\leq\frac{1}{10{\pi_0}},
    \end{align}
where {$\pi_0$} is the constant from \ref{item:P2}. Moreover, if $I_{m,k}$ interpolates uniformly at scale $h$, then it suffices for $\mu,h$ to instead satisfy
\begin{align}\label{cond:mu:h:Hk:unif:interpol}
        c\frac{\mu h^2}{\nu}\left[1+\chi_{(0,\infty)}(\gam)\left(\frac{\mu }{\gam}\right)\sum_{j=1}^{[p]}h^{2(j-1)}\right]\leq\frac{1}{10}, 
    \end{align}
in place of \eqref{cond:mu:h:Hk:gen}. {Note that we use the convention that $\chi_{(0,\infty)}(\gam)\left(\frac{\mu}{\gam}\right)\equiv0$, when $\gam=0$.}
\end{Thm}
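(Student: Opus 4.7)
The plan is to establish higher-order decay by an induction on $i = 1, 2, \ldots, m$ of energy estimates for $w = v - u$, which satisfies the difference equation
\begin{align*}
\partial_t w - \nu \De w + \gam(-\De)^{p+1} w + P_\s[(u \cdot \nabla) w + (w \cdot \nabla) v] = -\mu P_\s I_{m,k} w.
\end{align*}
The condition \eqref{cond:mu:h:Hk:gen} restricted to $i = 0$ recovers the AOT hypothesis \eqref{cond:mu:wp:H1}, so Theorem 2.2 provides the base case $\|w(t)\|_{\dot{H}^1} \leq e^{-\mu t/2}\|w_0\|_{\dot{H}^1}$. I would then induct on $i$ from $2$ to $m$, assuming at each step that $\|w(t)\|_{\dot{H}^j}$ decays exponentially for every $j < i$.

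For the inductive step I would pair the difference equation with $(-\De)^i w$ in $L^2$ and use self-adjointness of powers of $-\De$ together with the divergence-free condition on $w$ to obtain
\begin{align*}
\tfrac{1}{2}\tfrac{d}{dt}\|w\|_{\dot{H}^i}^2 + \nu\|w\|_{\dot{H}^{i+1}}^2 + \gam\|w\|_{\dot{H}^{i+1+p}}^2 &= -\langle P_\s[(u\cdot\nabla)w + (w\cdot\nabla)v], (-\De)^i w\rangle \\
&\qquad - \mu\langle I_{m,k}w, (-\De)^i w\rangle.
\end{align*}
The central algebraic identity from \cite{AzouaniOlsonTiti2014}, namely $I_{m,k}w = w - (w - I_{m,k}w)$, extracts the pure damping $-\mu\|w\|_{\dot{H}^i}^2$ and leaves the error pairing $\mu\langle w - I_{m,k}w, (-\De)^i w\rangle$. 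Cauchy--Schwarz and Proposition 3.5 bound this error by a sum of contributions of the form $\mu\veps_{i,j}(Q_q) h_q^{j-i}\|w\|_{\dot{H}^j(\til{Q}_q)}\|w\|_{\dot{H}^i}$ for $j = 1, \ldots, k$. Via Young's inequality and the smallness in \eqref{cond:mu:h:Hk:gen}, the $j = i+1$ contribution is absorbed by $\nu\|w\|_{\dot{H}^{i+1}}^2$ (matching the $\mu h^2/\nu$ scaling with the $\veps_{i,2}$ factor); contributions with $i+2 \leq j \leq i+1+[p]$ are absorbed by $\gam\|w\|_{\dot{H}^{i+1+p}}^2$ (whence the factor $\mu/\gam$ and the sum $\sum \veps_{i,j}^2 h^{2(j-1)}$ in the condition); and contributions with $j \leq i-1$ yield exponentially-decaying source terms via the inductive hypothesis, with a residual small multiple of $\mu\|w\|_{\dot{H}^i}^2$ reabsorbed by the damping.

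For the nonlinear pairing I would split $v = u + w$ and control the $u$--terms using Theorem 4.1, which furnishes uniform-in-$t$ bounds on $\|u\|_{L^\infty}$, $\|\nabla u\|_{L^\infty}$, and $\|u\|_{\dot{H}^{m+1}}$ for $t$ sufficiently large. The $w$--terms are handled by 2D Sobolev-product inequalities (Kato--Ponce commutator bounds, Agmon's inequality) that route derivatives so the only top-order $w$--norms appearing are $\|w\|_{\dot{H}^{i+1}}$ (absorbed by $\nu\|w\|_{\dot{H}^{i+1}}^2$) or $\|w\|_{\dot{H}^i}$ with a small prefactor (absorbed by the damping); lower-order $w$--norms come with inductive exponential decay. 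Assembling yields $\frac{d}{dt}\|w\|_{\dot{H}^i}^2 + \mu\|w\|_{\dot{H}^i}^2 \lesssim e^{-\mu t}$ for $t$ large, so Gr\"onwall produces $\|w(t)\|_{\dot{H}^i} = O(e^{-\mu t/2})$.

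The principal obstacle is the careful bookkeeping required to ensure the arithmetic of absorption bottoms out at precisely \eqref{cond:mu:h:Hk:gen}, and at \eqref{cond:mu:h:Hk:unif:interpol} in the uniform-scale case. The $\sup_{0\leq i\leq m}$ in the hypothesis reflects the fact that each level of the induction imposes its own smallness constraint on $\mu h^2/\nu$ via the constants $\veps_{i,j}(Q_q)$, and all these constraints must be simultaneous; in particular, the interaction between the hyperviscous dissipation and the higher-$j$ contributions in Proposition 3.5 is what produces the explicit hyperviscous term $\chi_{(0,\infty)}(\gam)(\mu/\gam)\sum_{j=1}^{[p]} \veps_{i,j}(Q_q)^2 h_q^{2(j-1)}$ in the smallness condition.
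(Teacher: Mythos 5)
Your overall strategy---bootstrapping the $H^1$ synchronization through an induction on the Sobolev level $i$, using the viscous and hyperviscous dissipation to absorb the top-order and hyper-order contributions of the interpolation error, and treating lower-order contributions as exponentially-decaying sources via the inductive hypothesis---matches the paper's proof of \cref{thm:sync:Hk}. The trilinear estimates you propose (Kato--Ponce, Agmon, interpolation) are in the spirit of \cref{lem:nlt:gen}, and your use of \cref{thm:abs:ball:Hk} for the background solution is exactly right.

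There is, however, a genuine gap in your treatment of the feedback term. You pair $\mu\langle w - I_{m,k}w, (-\De)^i w\rangle$ by a \emph{balanced} Cauchy--Schwarz, obtaining contributions $\mu\,\veps_{i,j}(Q_q)\, h_q^{j-i}\Sob{w}{\dot{H}^j(\til{Q}_q)}\Sob{w}{\dot{H}^i}$ for $j = 1,\dots,k$, and you then enumerate the absorptions for $j = i+1$, $i+2 \leq j \leq i+1+[p]$, and $j \leq i-1$ --- but you never address $j = i$. This diagonal case is fatal to the balanced approach: by \cref{prop:IO:global} the factor is $h_q^{2(j-i)} = 1$, so the $j=i$ contribution is $\mu\,\veps_{i,i}(Q_q)\Sob{w}{\dot{H}^i}^2$ with \emph{no} power of $h$. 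One would need $\sup_q\veps_{i,i}(Q_q)$ itself to be small to absorb it into the damping $\mu\Sob{w}{\dot{H}^i}^2$; but $\veps_{i,i}^2 = \sum_{i'<i}\eps_{i',i-i'}^2$ is a fixed $O(1)$ quantity built from the local interpolation constants and is not controlled by either \eqref{cond:mu:h:Hk:gen} or \eqref{cond:mu:h:Hk:unif:interpol}. These conditions only restrict $\mu h^2/\nu$ times the $\veps$'s, and there is no $h$ present in the $j=i$ term to make that product appear. (This is an unavoidable artifact of the partition-of-unity globalization: each derivative on $\psi_q$ costs a factor $h_q^{-1}$ by \ref{item:P4}, which exactly cancels the gain from the local interpolation error at level $j=i$.)

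The paper's proof avoids this by first integrating by parts once in the feedback term, writing
\begin{align*}
    \mu\sum_{|\al|=\ell}\lb\bdy^\al(w - J_{m,k}w),\bdy^\al w\rb
    = -\mu\sum_{|\al|=\ell}\sum_{\substack{\al'<\al\\|\al'|=1}}\lb\bdy^{\al-\al'}(w - I_{m,k}w),\bdy^{\al+\al'}w\rb,
\end{align*}
so the interpolation error is measured in $\dot{H}^{\ell-1}$ and paired against $\Sob{\nabla w}{\dot{H}^\ell}$. After Young's inequality (against $\nu\Sob{\nabla w}{\dot{H}^\ell}^2$) and \cref{prop:IO:global} at level $\ell-1$, the would-be diagonal contribution now appears with the favorable prefactor $\mu\big(\tfrac{\mu h_q^2}{\nu}\big)\veps_{\ell-1,\ell}(Q_q)^2\Sob{w}{\dot{H}^\ell}^2$, which \emph{does} have the $\mu h^2/\nu$ smallness demanded by \eqref{cond:mu:h:Hk:gen}. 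Put differently: the balanced pairing you propose is only viable when $I_{m,k}$ interpolates \emph{optimally} (which removes the $j=i$ term entirely; this is precisely the setting of \cref{thm:sync:Hk:mult}), whereas for a merely $(m,k)$-generic I.O.O.\ the one-derivative shift is essential. Incorporating this shift would also clean up the index bookkeeping: after shifting, the $j$-th contribution at level $\ell$ carries the constant $\veps_{\ell-1,j}$, which is what is implicitly tracked by the $\sup_{0\leq i\leq m}$ in \eqref{cond:mu:h:Hk:gen}.
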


Under stronger assumptions on $I_{m,k}$, one can obtain convergence up to the regularity level of the solution from which the data derives.

\begin{Thm}\label{thm:sync:Hk:mult}
{Let $\gam,p\geq0$}. Let $k\geq1$ and suppose that $I_{k+1}$ is a $(\II,\Psi)_\QQ$--subordinate global I.O.O. that interpolates optimally. Let $f\in\dot{H}^{k-1}_\s$, $u_0\in\B_k$, and let $u$ be the unique, global solution of \eqref{eq:nse:proj:hyper} corresponding to $u_0,f$. Suppose that $\mu,h$ satisfy \eqref{cond:mu:wp:H1} and let $v$ denote the unique, global solution of \eqref{eq:nse:ng:proj:hyper} corresponding to $v_0\in\dot{H}^k_\s$. Then there exists a constant $c>0$ such that
    \begin{align}\label{eq:sync:gen:mult}
        \Sob{v(t)-u(t)}{\dot{H}^k}\leq O(e^{-(\mu/2) t}),
    \end{align}
provided that $\mu, h$ additionally satisfy
    \begin{align}\label{cond:mu:h:opt:B}
        c\sum_q\veps_{k,k+1}(Q_q)^2\left(\frac{\mu h_q^2}{\nu}\right)\leq \frac{1}{10{\pi_0}}
    \end{align}
 {where {$\pi_0$} is the constant from \ref{item:P2}. Moreover, if $I_{k+1}$ is uniformly interpolating at scale $h$, then it suffices for $\mu, h$ to instead satisfy
    \begin{align}\label{cond:mu:h:opt:A}
        c\frac{\mu h^2}{\nu}\leq\frac{1}{10},
    \end{align}
in place of \eqref{cond:mu:h:opt:B}.}
\end{Thm}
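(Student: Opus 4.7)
The plan is to carry out an $\dot H^k$ energy estimate for the difference $w=v-u$ and exploit the extra order of accuracy afforded by \emph{optimal} interpolation at level $k+1$ to absorb the interpolation error directly into the dissipation. Subtracting \eqref{eq:nse:proj:hyper} from \eqref{eq:nse:ng:proj:hyper} yields
\begin{align}\notag
\bdy_t w-\nu\De w+\gam(-\De)^{p+1}w+P_\s\bigl[(w\cdotp\nabla)v+(u\cdotp\nabla)w\bigr]=-\mu P_\s I_{k+1}w.
\end{align}
Testing against $(-\De)^k w$, exploiting that $P_\s$ is self-adjoint and that $w$ is solenoidal, I obtain the identity
\begin{align}\notag
\tfrac12\tfrac{d}{dt}\Sob{w}{\dot H^k}^2+\nu\Sob{w}{\dot H^{k+1}}^2+\gam\Sob{w}{\dot H^{k+p+1}}^2=-\mu\lb I_{k+1}w,(-\De)^k w\rb-\mathcal N,
\end{align}
where $\mathcal N=\lb (w\cdotp\nabla)v+(u\cdotp\nabla)w,(-\De)^k w\rb$.

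To dispatch the nudging term, I write $I_{k+1}w=w-(w-I_{k+1}w)$, which extracts the linear damping $-\mu\Sob{w}{\dot H^k}^2$, and then bound the remainder via Cauchy--Schwarz and Young:
\begin{align}\notag
|\mu\lb w-I_{k+1}w,(-\De)^k w\rb|\leq\tfrac{\mu}{2}\Sob{w-I_{k+1}w}{\dot H^k}^2+\tfrac{\mu}{2}\Sob{w}{\dot H^k}^2.
\end{align}
The optimality of $I_{k+1}$ at level $k+1$, combined with \cref{prop:IO:global} applied with $\ell=k$ and $k'=k+1$, delivers the crucial bound
\begin{align}\notag
\Sob{w-I_{k+1}w}{\dot H^k}^2\leq\sum_q\veps_{k,k+1}(Q_q)^2h_q^2\Sob{w}{\dot H^{k+1}(\til{Q}_q)}^2,
\end{align}
after which \cref{lem:multiplicity} and the smallness hypothesis \eqref{cond:mu:h:opt:B} (resp.\ \eqref{cond:mu:h:opt:A} in the uniform case) allow this term to be absorbed into a fraction of $\nu\Sob{w}{\dot H^{k+1}}^2$.

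The main obstacle is the nonlinear term $\mathcal N$. I would split it as $\lb B(w,v),(-\De)^k w\rb+\lb B(u,w),(-\De)^k w\rb$ with $B(a,b)=(a\cdotp\nabla)b$, apply the Leibniz rule, and use Gagliardo--Nirenberg, Ladyzhenskaya, and Sobolev embeddings in two dimensions (with the standard logarithmic enhancement in the borderline case $k=1$) to derive an estimate of the form
\begin{align}\notag
|\mathcal N|\leq\tfrac{\nu}{4}\Sob{w}{\dot H^{k+1}}^2+\tfrac{C}{\nu}\bigl(\Sob{u}{\dot H^k}^{2}+\Sob{v}{\dot H^k}^{2}\bigr)\Sob{w}{\dot H^k}^2.
\end{align}
A uniform bound on $\Sob{u}{\dot H^k}$ is furnished by \cref{thm:abs:ball:Hk}, since $u_0\in\B_k$. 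A uniform bound on $\Sob{v}{\dot H^k}$ is obtained by a bootstrap/continuity argument: \cref{thm:wp:ng} supplies the required local-in-time $\dot H^k$ control, and the relation $v=u+w$ together with the anticipated bound on $\Sob{w}{\dot H^k}$ closes the loop on any interval where $\Sob{w}{\dot H^k}$ remains controlled.

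Combining the three previous ingredients produces a differential inequality of the form
$\tfrac{d}{dt}\Sob{w}{\dot H^k}^2+(\mu-K)\Sob{w}{\dot H^k}^2\leq 0$,
with $K$ controlled through the absorbing-ball bounds for $u$ and $v$. Provided that $\mu$ additionally satisfies the companion lower bound $\mu\gtrsim\nu(1+\log(1+G))G$ inherited from \cref{thm:main:AOT}, one obtains $\mu-K\geq\mu/2$ once $t$ is sufficiently large, and a direct application of Gr\"onwall's inequality yields \eqref{eq:sync:gen:mult}.
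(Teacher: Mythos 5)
Your overall scaffold is right — an $\dot H^k$ energy estimate for $w=v-u$, extraction of the $-\mu\Sob{w}{\dot H^k}^2$ damping from the nudging term, and use of the optimal interpolation estimate $\Sob{w-I_{k+1}w}{\dot H^k}^2\leq\sum_q\veps_{k,k+1}(Q_q)^2h_q^2\Sob{w}{\dot H^{k+1}(\til{Q}_q)}^2$ together with \cref{lem:multiplicity} to absorb the error into $\nu\Sob{\nabla w}{\dot H^k}^2$ — and this matches the paper's treatment of the nudging term. (Your use of $I_{k+1}$ rather than the paper's $J_{k+1}$ is harmless since $P_\s$ annihilates constants.) But the handling of the nonlinear term has a genuine gap that prevents the argument from closing under the theorem's hypotheses.

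First, your decomposition $(w\cdot\nabla)v+(u\cdot\nabla)w$ introduces $v$, whereas the paper works with $(w\cdot\nabla)w+(w\cdot\nabla)u+(u\cdot\nabla)w$, which involves only $u$ (bounded by the absorbing ball, \cref{thm:abs:ball:Hk}) and $w$. More importantly, the estimate you claim, namely $|\mathcal N|\leq\tfrac{\nu}{4}\Sob{w}{\dot H^{k+1}}^2+\tfrac{C}{\nu}\big(\Sob{u}{\dot H^k}^2+\Sob{v}{\dot H^k}^2\big)\Sob{w}{\dot H^k}^2$, is of the wrong form: the coefficient $K=\tfrac{C}{\nu}\big(\Sob{u}{\dot H^k}^2+\Sob{v}{\dot H^k}^2\big)$ does \emph{not} decay, since $\Sob{v(t)}{\dot H^k}\to\Sob{u(t)}{\dot H^k}$ rather than $0$, so $K$ stays of order $\nu\big(\s_{k-1}^{1/k}+G\big)^{2(k-1)}G^2$ for all large $t$. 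Requiring $\mu-K\geq\mu/2$ would then force $\mu/\nu\gtrsim\big(\s_{k-1}^{1/k}+G\big)^{2(k-1)}G^2$, vastly stronger than the $\mu\gtrsim\nu(1+\log(1+G))G$ that the theorem actually imposes via \eqref{cond:mu:AOT}. Your remark "once $t$ is sufficiently large" does not rescue this, because $K$ does not vanish as $t\to\infty$.

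The paper circumvents this through the structure of \cref{lem:nlt:gen}: the trilinear estimates there (specifically \eqref{est:bvvu} and \eqref{est:buvv}) are designed so that, after Young's inequality, the $\Sob{u}{\dot H^\ell}$ factors multiply $\Sob{w}{\dot H^1}^2$ rather than $\Sob{w}{\dot H^\ell}^2$, and the only $\Sob{w}{\dot H^\ell}^2$ term with a nonconstant coefficient carries the prefactor $\tfrac{c}{\nu}\Sob{\nabla w}{L^2}^2$. Both of these decay exponentially by the $\dot H^1$ synchronization of \cref{lem:sync:H1}, so the damping coefficient in \eqref{est:sync:Hk} is eventually $\geq\mu$ regardless of $\Sob{u}{\dot H^k}$, and the Gr\"onwall exponent $e^{c\Sob{\nabla w_0}{L^2}^2/\mu^2}$ is a finite constant. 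To repair your argument you would need to reproduce this refined structure — interpolate in $w$ to trade $\Sob{w}{\dot H^k}^2$ for a fractional power of $\Sob{\nabla w}{\dot H^k}^2$ that can be absorbed into dissipation, leaving the remainder proportional to $\Sob{\nabla w}{L^2}^2$ — and eliminate $v$ in favor of $u$ and $w$. As written, the estimate on $\mathcal N$ does not yield the theorem under its stated hypotheses on $\mu$.
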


\begin{Rmk}\label{rmk:gevrey}
In \cite{BiswasMartinez2017}, it was proved that convergence with respect to the analytic Gevrey norm was ensured under slightly more stringent conditions than \eqref{cond:mu:wp:H1} in the particular case when only spectral observations are used. 
\end{Rmk}

\begin{Rmk}\label{rmk:dirichlet}
In \cite{AzouaniOlsonTiti2014}, the case of Dirichlet boundary conditions was also treated, where convergence in $L^2$ was obtained for a wide class of observable quantities, including nodal value observations. In light of these results and \cref{thm:sync:Hk}, it remains an interesting issue to investigate whether one can show higher-order convergence in the setting of Dirichlet boundary conditions or others, when data is particularly given by nodal values. Moreover, in light of \cref{rmk:pou}, the framework we establish here may accommodate the case where infinitely many I.O.O.'s are used across the domain in the Dirichlet setting. We leave the study of this case to a future work.
\end{Rmk}

\begin{Rmk} From \cref{cor:IO:global}, we may immediately deduce from \cref{thm:sync:Hk}  that convergence in $H^2$ can be ensured in the case of nodal observables by using either Taylor polynomials of degree $1$ or quadratic Lagrange polynomial as the method of interpolation. Since $H^2$ embeds into $L^\infty$, this provides rigorous confirmation of the observation from the numerical simulations carried out in \cite{GeshoOlsonTiti2016} that the approximating solution was in fact converging uniformly in space to the reference solution. In particular, this also supplements the result in \cite{BiswasMartinez2017}, where the synchronization property with respect to the uniform topology, $L^\infty$,  was established in the particular case of the spectral observables. In the absence of hyperdissipation, i.e., $\gam=0$, we note that the same assumption on $\mu, h$ is imposed in either case of nodal observables or spectral observables, up to an absolute constant.
\end{Rmk}

\begin{Rmk}\label{rmk:3d}
The case of hyperviscosity is included here in order to illustrate the interplay between the order of dissipation and the order of interpolation. It is a well-known fact that for $p\geq1/4$, the corresponding system \eqref{eq:nse:proj:hyper} in dimension $d=3$ has global unique strong solutions. We point out that the analysis developed here applies in a straightforward manner to that setting as well. {We refer the reader to the work of \cite{Younsi2010} for the relevant details.}
\end{Rmk}

\section{Proofs of Main Results}\label{sect:proofs}
We will first prove \cref{thm:abs:ball:Hk} in \cref{sect:abs:ball:Hk}. We will then prove \cref{thm:sync:Hk} in \cref{sect:sync:Hk}. Recall that the proof of \cref{thm:wp:ng} will be supplied in \cref{sect:app:wp:ng}. To prove these results, it will first be useful to collect various estimates for the trilinear term that appears in the estimates. The proof of these estimates are elementary, but we supply them here for the sake of completeness.

\begin{Lem}\label{lem:nlt:gen}
Let $u$ be a smooth, divergence-free vector field in $\T^2$, and $v$ be any smooth function. Then given $\ell\geq2$, for any $|\al|=\ell$ we have
    \begin{align}\label{est:buvv:1}
        &|\lb\bdy^\al(u\cdotp\nabla )v,\bdy^\al v\rb|\\
        &\leq c\bigg(\sum_{1\leq l\leq\ell-2}\Sob{u}{\dot{H}^{\ell+1}}^{\frac{2\ell-2l-1}{2\ell}}\Sob{u}{\dot{H}^1}^{\frac{2l+1}{2\ell}}\Sob{v}{\dot{H}^{\ell+1}}^{\frac{2l+1}{2\ell}}\Sob{v}{\dot{H}^1}^{\frac{2\ell-2l-1}{2\ell}}\Sob{v}{\dot{H}^\ell}+\Sob{u}{\dot{H}^{\ell+1}}^{1/2}\Sob{u}{\dot{H}^\ell}^{1/2}\Sob{v}{\dot{H}^1}\Sob{v}{\dot{H}^{\ell+1}}^{1/2}\Sob{v}{\dot{H}^\ell}^{1/2}\notag\\
        &\qquad+\Sob{u}{\dot{H}^1}\Sob{v}{\dot{H}^{\ell+1}}\Sob{v}{\dot{H}^\ell}\bigg)\notag.
    \end{align} 
For all $\be\leq \al$, we have
    \begin{align}\label{est:bvvu}
        &|\lb(\bdy^{\al-\be}v\cdotp\nabla)\bdy^\be v,\bdy^\al u\rb|
        \leq c\Sob{u}{\dot{H}^\ell}\Sob{ v}{\dot{H}^{\ell+1}}^{\frac{2\ell-1}{\ell}}\Sob{\nabla v}{L^2}^{\frac{1}{\ell}},
    \end{align}
and
    \begin{align}\label{est:buvv}
     |\lb (\bdy^{\al-\be}u\cdotp\nabla)\bdy^\be v, \bdy^\al v\rb|\leq c\Sob{u}{\dot{H}^\ell}\left(\Sob{v}{\dot{H}^{\ell+1}}^{\frac{\ell+2}{\ell+1}}\Sob{\nabla v}{L^2}^{\frac{\ell}{\ell+1}}+\Sob{v}{\dot{H}^{\ell+1}}^{\frac{2(\ell-1)}{\ell}}\Sob{\nabla v}{L^2}^{\frac{2}{\ell}}\right),
    \end{align}
for some constants $c>0$, depending on $\ell$.
\end{Lem}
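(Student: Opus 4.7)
The plan is to derive all three estimates from the Leibniz product rule together with Hölder's inequality and the two-dimensional Gagliardo--Nirenberg (GN) interpolation inequalities; in the first estimate a crucial cancellation from $\nabla\cdot u=0$ enters as well.

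For \eqref{est:buvv:1}, I would first expand via the Leibniz rule,
\eq{
\bdy^\al(u\cdotp\nabla)v=\sum_{\be\leq\al}\binom{\al}{\be}(\bdy^{\al-\be}u\cdotp\nabla)\bdy^\be v,\notag
}
and observe that the $\be=\al$ term pairs with $\bdy^\al v$ to give $\lb(u\cdotp\nabla)\bdy^\al v,\bdy^\al v\rb=-\tfrac12\int(\nabla\cdotp u)|\bdy^\al v|^2dx=0$, by integration by parts and the divergence-free condition on $u$. Thus only terms with $0\leq|\be|\leq\ell-1$ survive, and each is estimated by Hölder's inequality with exponents $1/p+1/q+1/2=1$ chosen according to how many derivatives fall on $u$ versus $v$. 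Three regimes correspond to the three pieces in \eqref{est:buvv:1}: (i) for the interior multi-indices with $|\al-\be|-1=l\in\{1,\dots,\ell-2\}$, apply GN in $\T^2$ to interpolate each of $\bdy^{\al-\be}u$ and $\nabla\bdy^\be v$ between $\dot{H}^1$ and $\dot{H}^{\ell+1}$, which yields the $l$--indexed sum; (ii) for $|\be|=\ell-1$, place $\nabla\bdy^\be v$ in $L^2$ and use an Agmon-type bound on the $u$-factor controlled between $\dot{H}^\ell$ and $\dot{H}^{\ell+1}$, giving the middle term; (iii) for $|\be|=0$, put $\bdy^\al u$ in $L^2$ and control $v$-factors in $L^\infty$ via $\Sob{\nabla v}{L^\infty}\leq c\Sob{v}{\dot{H}^{\ell+1}}^{1-1/\ell}\Sob{v}{\dot{H}^\ell}^{1/\ell}$ (with the mean-free refinement using $\Sob{v}{\dot{H}^1}$), giving the final term.

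For \eqref{est:bvvu} and \eqref{est:buvv}, no cancellation is needed: the factors are paired directly by Hölder's inequality and then each mixed-order term is interpolated by the 2D GN family
\eq{
\Sob{\bdy^\gam w}{L^p}\leq c\Sob{\nabla w}{L^2}^\tht\Sob{w}{\dot{H}^{\ell+1}}^{1-\tht},\notag
}
with $\tht$ determined by scaling so that the total count of derivatives adds to $\ell+1$ on the $v$-side and $\ell$ on the $u$-side. The explicit powers $(2\ell-1)/\ell$ and $1/\ell$ in \eqref{est:bvvu} are precisely what this scaling forces when one factor is placed in $L^\infty$ (giving one exponent) and the remaining two in $L^2\times L^2$. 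The two-term form of \eqref{est:buvv} reflects the two natural Hölder splittings of the trilinear integrand: placing $\bdy^{\al-\be}u$ in $L^2$ against $(\nabla\bdy^\be v)\bdy^\al v$ in $L^2$ (leading to the $(\ell+2)/(\ell+1)$ exponent after GN on the $v$-product), versus placing it in $L^\infty$ and the $v$-product in $L^1$ (leading to the $2(\ell-1)/\ell$ exponent).

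The main obstacle will be the bookkeeping in regime (i) of \eqref{est:buvv:1}: matching the GN interpolation parameters to produce the exact fractional exponents $(2\ell-2l-1)/(2\ell)$ and $(2l+1)/(2\ell)$ displayed in the statement, uniformly in $l\in\{1,\dots,\ell-2\}$. The underlying inequalities are classical, so the work is entirely in organizing the case analysis over multi-indices $\be\leq\al$ and verifying the scaling identity—namely that the sum of derivative counts (weighted by the GN exponents) equals $\ell+1$ on both the $u$ and $v$ sides, as it must by homogeneity of the trilinear form.
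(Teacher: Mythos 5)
Your plan correctly identifies the overall machinery the paper uses — Leibniz expansion, divergence-free cancellation of the top-order term, H\"older's inequality, and 2D Gagliardo--Nirenberg / Ladyzhenskaya / Agmon interpolation — and your regime (i) for the interior multi-indices of \eqref{est:buvv:1} is exactly the paper's argument ($L^4\times L^4\times L^2$ H\"older, then interpolate each factor between $\dot H^1$ and $\dot H^{\ell+1}$). The trouble is in the two boundary cases, where your proposed H\"older assignments would not reproduce the stated bound. In the paper, the middle term of \eqref{est:buvv:1} comes from $|\be|=0$ via $\Sob{\bdy^\al u}{L^4}\Sob{\nabla v}{L^2}\Sob{\bdy^\al v}{L^4}$, and the third term from $|\be|=\ell-1$ via $\Sob{\nabla u}{L^2}\Sob{\nabla\bdy^\be v}{L^4}\Sob{\bdy^\al v}{L^4}$ — both $L^4\times L^2\times L^4$ splittings followed by Ladyzhenskaya, not $L^\infty$. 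Your regime (iii) (put $\bdy^\al u\in L^2$, $\nabla v\in L^\infty$) would produce an $\Sob{u}{\dot H^\ell}$ prefactor, which cannot be improved to the $\Sob{u}{\dot H^1}$ appearing in the third term; your regime (ii) (Agmon on $\nabla u$, other two in $L^2$) would yield $\Sob{u}{\dot H^3}^{1/2}\Sob{u}{\dot H^1}^{1/2}\Sob{v}{\dot H^\ell}^2$, not the middle term. In short, the assignment of the two edge cases to the displayed terms is swapped, and the exponent families do not match.

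For \eqref{est:buvv} there is a more structural omission: the paper does not treat the remaining multi-indices by ad hoc H\"older splittings but rewrites the full sum as a commutator $-\sum_{|\al|=\ell}\lb[\bdy^\al,\,u\cdot\nabla]v,\bdy^\al v\rb$ when $\ell\geq3$ and invokes the Klainerman--Majda/Kato--Ponce commutator estimate (with a separate, direct $L^4$ argument for the borderline case $\ell=2$). Your claim that ``no cancellation is needed'' for \eqref{est:buvv} is therefore at odds with the paper's mechanism: the commutator structure is precisely what packages the many Leibniz terms into the clean two-term bound with $\Sob{u}{\dot H^\ell}$ out front; a naive term-by-term split would leave several mixed $\dot H^{|\be|}$--type $u$-norms that do not obviously collapse. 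Similarly, for \eqref{est:bvvu} the paper splits into three pieces ($\be=\al$, $|\be|=0$, the rest) and only uses $L^\infty$ via Agmon on the $\be=\al$ piece; the other two again go through $L^4$--$L^2$--$L^4$, so ``one factor in $L^\infty$, the other two in $L^2$'' captures only part of the decomposition. The framework you chose is right, but the H\"older bookkeeping for the boundary cases and the commutator step for \eqref{est:buvv} both need to be corrected for the argument to close.
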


\begin{proof}
By the Leibniz rule and the fact that $u$ is divergence-free, we have
    \begin{align}
        I=\lb\bdy^\al(u\cdotp\nabla )v,\bdy^\al v\rb=\sum_{\be<\al}c_{\be,\al}\lb((\bdy^{\al-\be}u)\cdotp\nabla)\bdy^{\be}v,\bdy^\al v\rb=\sum_{\substack{\be<\al\\ 1\leq|\be|\leq \ell-2}}+\sum_{|\be|=0,\ell-1}=I_a+I_b.\notag
    \end{align}
where we interpret $\be<\al$ as $\be_i< \al_i$, for $i=1,2$.
By H\"older's inequality and interpolation we have
    \begin{align}
        |I_a|&\leq c\sum_{\substack{\be<\al\\ 1\leq|\be|\leq \ell-2}}\Sob{\bdy^{\al-\be}u}{L^4}\Sob{\nabla\bdy^\be v}{L^4}\Sob{\bdy^\al v}{L^2}
        \leq c\sum_{1\leq l\leq \ell-2}\Sob{u}{\dot{H}^{\ell-l+1}}^{1/2}\Sob{u}{\dot{H}^{\ell-l}}^{1/2}\Sob{v}{\dot{H}^{l+2}}^{1/2}\Sob{v}{\dot{H}^{l+1}}^{1/2}\Sob{\bdy^\al v}{L^2}\notag\\
        &\leq c\sum_{1\leq l\leq\ell-2}\Sob{u}{\dot{H}^{\ell+1}}^{\frac{2\ell-2l-1}{2\ell}}\Sob{u}{\dot{H}^1}^{\frac{2l+1}{2\ell}}\Sob{v}{\dot{H}^{\ell+1}}^{\frac{2l+1}{2\ell}}\Sob{v}{\dot{H}^1}^{\frac{2\ell-2l-1}{2\ell}}\Sob{\bdy^\al v}{L^2}.\notag
    \end{align}
On the other hand, by H\"older's inequality and interpolation we have
    \begin{align}
        |I_b|&\leq \Sob{\bdy^\al u}{L^4}\Sob{\nabla v}{L^2}\Sob{\bdy^\al v}{L^4}+\Sob{\nabla u}{L^2}\Sob{\bdy^\al v}{L^4}^2\notag\\
        &\leq c\Sob{u}{\dot{H}^{\ell+1}}^{1/2}\Sob{u}{\dot{H}^\ell}^{1/2}\Sob{v}{\dot{H}^1}\Sob{v}{\dot{H}^{\ell+1}}^{1/2}\Sob{v}{\dot{H}^\ell}^{1/2}+\Sob{u}{\dot{H}^1}\Sob{v}{\dot{H}^{\ell+1}}\Sob{v}{\dot{H}^\ell}\notag.
    \end{align}
Combining the estimates for $I_a$ and $I_b$, then summing over $|\al|=\ell$, yields \eqref{est:buvv:1}.

Now consider
    \begin{align}\notag
        II=\sum_{\be\leq\al}|\lb (\bdy^{\al-\be}v\cdotp\nabla)\bdy^\be u, \bdy^\al v\rb|=\sum_{\be=\al} +\sum_{|\be|=0}+\sum_{\substack{\be<\al\\ |\be|\neq0}}=II_a+II_b+II_c.
    \end{align}
We treat $II_a$ as
 \begin{align}
       |II_a|&=|\lb (v\cdotp\nabla)\bdy^\al v,\bdy^\al u\rb|\leq c\Sob{v}{L^\infty}\Sob{\nabla v}{\dot{H}^\ell}\Sob{v}{\dot{H}^\ell}\notag\\
       &\leq c\Sob{v}{\dot{H}^2}^{1/2}\Sob{v}{L^2}^{1/2}\Sob{ v}{\dot{H}^{\ell+1}}\Sob{u}{\dot{H}^\ell}
       \leq c\Sob{v}{\dot{H}^\ell}^{\frac{1}{\ell}}\Sob{v}{L^2}^{\frac{\ell-1}{\ell}}\Sob{ v}{\dot{H}^{\ell+1}}\Sob{u}{\dot{H}^\ell}.\notag
    \end{align}
We treat $II_b$ as
    \begin{align}
        |II_b|&\leq c\lb(\bdy^\al v\cdotp\nabla)u,\bdy^\al v\rb|\leq c\Sob{\bdy^\al v}{L^4}^2\Sob{\nabla u}{L^2}^2\notag\\
        &\leq c\Sob{v}{\dot{H}^{\ell+1}}\Sob{v}{\dot{H}^\ell}\Sob{\nabla u}{L^2}\leq c\Sob{v}{\dot{H}^{\ell+1}}^{\frac{\ell+1}{\ell}}\Sob{\nabla v}{L^2}^{\frac{\ell-1}{\ell}}\Sob{\nabla u}{L^2}.\notag
    \end{align}
Finally, we treat $II_c$ as
 \begin{align}
        |II_c|&\leq c\sum_{\substack{\be<\al\\ |\be|\neq0}}|\lb (\bdy^{\al-\be}v\cdotp\nabla)\bdy^\al v,\bdy^\be u\rb|\leq c\sum_{\substack{\be<\al\\ |\be|\neq0}}\Sob{\bdy^{\al-\be}v}{L^4}\Sob{\nabla v}{\dot{H}^\ell}
         \Sob{\bdy^\be u}{L^4}\notag\\
        &\leq c\Sob{ v}{\dot{H}^{\ell+1}}\sum_{\substack{\be<\al\\ |\be|\neq0}}\Sob{v}{\dot{H}^{\ell-|\be|+1}}^{1/2}\Sob{v}{\dot{H}^{\ell-|\be|}}^{1/2}\Sob{ u}{\dot{H}^{|\be|+1}}^{1/2}\Sob{ u}{\dot{H}^{|\be|}}^{1/2}\leq c\sum_{\substack{\be<\al\\ |\be|\neq0}}\Sob{ v}{\dot{H}^{\ell+1}}^{\frac{4\ell-2|\be|-1}{2\ell}}\Sob{\nabla v}{L^2}^{\frac{2|\be|+1}{2\ell}}\Sob{u}{\dot{H}^{|\be|+1}}^{1/2}\Sob{u}{\dot{H}^{|\be|}}^{1/2}.\notag
    \end{align}
Upon combining $II_a$, $II_b$, $II_c$ and applying the Poincar\'e inequality, we obtain \eqref{est:bvvu}.

Lastly, we consider
    \begin{align}
        III=\sum_{\be<\al}|\lb \bdy^{\al-\be}u^j\bdy_j\bdy^\be v^i, \bdy^\al v^i\rb|=\left(\sum_{|\be|=0}+\sum_{\substack{\be<\al\\ |\be|\neq0}}\right)=III_a+III_b.\notag
    \end{align}
We estimate $III_a$ with H\"older's inequality, interpolation, and Young's inequality to obtain
    \begin{align}
        |III_a|&\leq c\Sob{u}{\dot{H}^\ell}\Sob{\nabla v}{L^4}\Sob{\bdy^\al v}{L^4}\leq c\Sob{u}{\dot{H}^\ell}\Sob{\nabla v}{\dot{H}^1}^{1/2}\Sob{\nabla v}{L^2}^{1/2}\Sob{\nabla v}{\dot{H}^\ell}^{1/2}\Sob{v}{\dot{H}^\ell}^{1/2}\notag\\
        &\leq c\Sob{u}{\dot{H}^\ell}\Sob{\nabla v}{\dot{H}^\ell}^{\frac{\ell+2}{\ell+1}}\Sob{\nabla v}{L^2}^{\frac{\ell}{\ell+1}}.\notag
    \end{align}
We treat $III_b$ and consider the case $\ell=2$ separately. When $\ell=2$, we apply H\"older's inequality and interpolation to obtain
    \begin{align}
        |III_b|&\leq \sum_{\substack{\be<\al\\ |\be|\neq0}}\Sob{\bdy^{\al-\be}u}{L^4}\Sob{
    \bdy^\be\nabla v}{L^2}\Sob{\bdy^\al v}{L^4}\leq c\Sob{ u}{\dot{H}^{2}}^{1/2}\Sob{u}{\dot{H}^1}^{1/2}\Sob{ v}{\dot{H}^2}^{3/2}\Sob{\nabla v}{\dot{H}^2}^{1/2}\notag\\
        &\leq c\Sob{u}{\dot{H}^2}^{1/2}\Sob{u}{\dot{H}^1}^{1/2}\Sob{\nabla v}{\dot{H}^2}^{3/4}\Sob{\nabla v}{L^2}^{5/4}\notag.
    \end{align}
When $\ell\geq3$, we use the divergence-free condition to write $III_b$ as commutator. In particular
    \begin{align}\notag
        III_b=-\sum_{|\al|=\ell}\lb[\bdy^\al,u\cdotp\nabla]v,\bdy^\al v\rb.
    \end{align}
By H\"older's inequality, a classical commutator estimate \cite{KlainermanMajda1981} and interpolation we obtain
    \begin{align}
        |III_b|&\leq c\Sob{\nabla u}{L^\infty}\Sob{\nabla v}{\dot{H}^{\ell-1}}\Sob{v}{\dot{H}^\ell}+c\Sob{u}{\dot{H}^\ell}\Sob{\nabla v}{L^\infty}\Sob{v}{\dot{H}^\ell}\notag\\
        &\leq c\Sob{\nabla u}{L^\infty}\Sob{\nabla v}{\dot{H}^\ell}^{\frac{2(\ell-1)}{\ell}}\Sob{\nabla v}{L^2}^{\frac{2}{\ell}}+c\Sob{u}{\dot{H}^\ell}\Sob{\nabla v}{\dot{H}^\ell}\Sob{\nabla v}{L^2}\notag\\
        &\leq c\Sob{u}{\dot{H}^3}^{1/2}\Sob{u}{\dot{H}^1}^{1/2}\Sob{\nabla v}{\dot{H}^\ell}^{\frac{2(\ell-1)}{\ell}}\Sob{\nabla v}{L^2}^{\frac{2}{\ell}}+c\Sob{u}{\dot{H}^\ell}\Sob{\nabla v}{\dot{H}^\ell}\Sob{\nabla v}{L^2}.\notag
    \end{align}
Upon combining $III_a$, $III_b$, and applying the Poincar\'e inequality, we arrive at \eqref{est:buvv}.
\end{proof}

\subsection{Higher-order absorbing ball estimates: Proof of Theorem \ref{thm:abs:ball:Hk}}\label{sect:abs:ball:Hk}
Let $u_0\in \dot{H}^1_{\s}$ and let $u\in C([0,\infty);\dot{H}^1_{\s})\cap L^2_{loc}(0,\infty;\dot{H}^2)$ be the unique strong solution of \eqref{eq:nse:proj:hyper} corresponding to initial data $u_0$ and external forcing, $f$. Let us recall that the Grashof number is defined by \eqref{def:grashof} and the $k$--th order shape factor of $f$ is given by \eqref{def:shape}.

\begin{proof}[Proof of \cref{thm:abs:ball:Hk}]
Let $\mathcal{A}\subset\dot{H}^k_\s$ denote the global attractor of \eqref{eq:nse:proj:hyper} corresponding to forcing $f$. It will suffice to show that 
    \begin{align}\notag
        \frac{\Sob{u}{\dot{H}^k}}{\nu}\leq c(\s_{k-1}^{1/k}+G)^{k-1}G,
    \end{align}
for all $u\in\mathcal{A}$. Indeed, since $\mathcal{A}$ uniformly attracts bounded subsets of $\dot{H}^k_\s$, it would then follow that $\Sob{u(t;u_0,f)}{\dot{H}^k}\leq 2c(\s_{k-1}^{1/k}+G)^{k-1}G$, for all $t\geq t_0$, for some $t_0=t_0(u_0,f)$.

Let $u_0\in\mathcal{A}$ such that
    \begin{align}\notag
        \Sob{u_0}{\dot{H}^k}=\max\{\Sob{v}{\dot{H}^k}:v\in\mathcal{A}\}.
    \end{align} 
Denoting $u(t)=u(t;u_0,f)$, for all $t\in\R$. Then, owing to the divergence-free condition, the basic energy balance in $\dot{H}^k$ is given by
    \begin{align}
        \frac{1}2\frac{d}{dt}\Sob{u}{\dot{H}^k}^2+\nu\Sob{\nabla u}{\dot{H}^k}^2&=-\sum_{|\al|=k}\sum_{\be<\al}c_{\al,\be}\lb \bdy^{\al-\be}u^j\bdy_j\bdy^\be u^\ell,\bdy^\al u^\ell\rb+\sum_{|\al|=k}\lb \bdy^\al P_\s f,\bdy^\al u\rb\notag\\
        &=I+II.\notag
    \end{align}
In particular, by choice of $u(0)=u_0$, observe that $\frac{d}{dt}\Sob{u(t;u_0,f)}{\dot{H}^k}\big{|}_{t=0}=0$, so that
    \begin{align}
        \nu\Sob{\nabla u_0}{\dot{H}^k}^2&=I+II.\notag
    \end{align}
By \eqref{est:buvv:1} of \cref{lem:nlt:gen}, we have
    \begin{align}\label{est:nlt:decomp:Hk:I}
        |I|\leq c\Sob{\nabla u_0}{L^2}\Sob{\nabla u_0}{\dot{H}^k}\Sob{u_0}{\dot{H}^k}.
    \end{align}
For $II$, we integrate by parts and apply H\"older's inequality to obtain
    \begin{align}
        |II|&\leq\Sob{P_\s f}{\dot{H}^{k-1}}\Sob{\nabla u_0}{\dot{H}^k}=\nu^2\s_{k-1}G\Sob{\nabla u_0}{\dot{H}^k}.\notag
    \end{align}
Upon combining the estimates for $I, II$, we arrive at
    \begin{align}\notag
        \nu\Sob{\nabla u_0}{\dot{H}^k}\leq c\Sob{\nabla u_0}{L^2}\Sob{u_0}{\dot{H}^k}+\nu^2\s_{k-1}G.
    \end{align}
Observe that by interpolation
    \begin{align}\label{est:abs:Hk:interpolate}
        \Sob{u_0}{\dot{H}^k}\leq c\Sob{\nabla u_0}{\dot{H}^k}^{\frac{k-1}k}\Sob{\nabla u_0}{L^2}^{\frac{1}k}.
    \end{align}
With this and Young's inequality, we estimate
    \begin{align}
        \nu\Sob{\nabla u_0}{\dot{H}^k}&\leq c\Sob{\nabla u_0}{\dot{H}^k}^{\frac{k-1}k}\Sob{\nabla u_0}{L^2}^{\frac{k+1}k}+\nu^2\s_{k-1}G\notag\\
        &\leq \frac{\nu}2\Sob{\nabla u_0}{\dot{H}^k}+c\nu^3\left(\frac{\Sob{\nabla u_0}{L^2}}{\nu}\right)^{k+1}+\nu^2\s_{k-1}G\notag\\
        &\leq \frac{\nu}2\Sob{\nabla u_0}{\dot{H}^k}+c\nu^2G^{k+1}+\nu^2\s_{k-1}G.\notag
    \end{align}
Hence
    \begin{align}\label{est:abs:gradHk:max}
        \Sob{\nabla u_0}{\dot{H}^k}\leq c\nu(\s_{k-1}^{1/k}+G)^kG.
    \end{align}
Returning to \eqref{est:abs:Hk:interpolate}, it follows that
    \begin{align}\notag
        \frac{\Sob{u_0}{\dot{H}^k}}{\nu}\leq c(\s_{k-1}^{1/k}+G)^{k-1}G^{\frac{k-1}k}\left(\frac{\Sob{\nabla u_0}{L^2}}{\nu}\right)^{\frac{1}k}\leq c(\s_{k-1}^{1/k}+G)^{k-1}G,
    \end{align}
which proves \eqref{eq:abs:ball:Hk}, as desired.

\end{proof}

\subsection{Synchronization in higher-order Sobolev topologies}\label{sect:sync:Hk}
Let $p\geq0$ and $u_0,v_0\in \B_1\cap\B_k$, where $1+m\leq k\leq 2+p$ and $u$ and $v$ denote the corresponding unique strong solutions of the following initial value problem 
\begin{align}\label{eq:nudge:newton}
\begin{split}
\bdy_tu -\nu\De u+\gam(-\De)^{p+1}u+ P_\s(u \cdot \nabla ) u  &=P_\s f, \quad P_\s u=0,\quad u(0) = u_0, \\
\bdy_tv -\nu\De v+\gam(-\De)^{p+1}v+ P_\s(v \cdot \nabla ) v  &= P_\s f - \mu P_\s J_{m,k}(v - u),\quad P_\s v=0,\quad v(0)=v_0,
\end{split}
\end{align}
where $J_{m,k}=I_{m,k}-\lb I_{m,k}\rb$, where $I_{m,k}$ is an $(\II,\Psi)_\QQ$--subordinate global I.O.O. with associated covering $\QQ$, and $\lb I_{m,k}\rb$ denotes the operator such that $\lb I_{m,k}\rb\phi=(2\pi)^{-2}\int_{\T^2} I_{m,k}\phi(x)dx$. Let $w:=v-u$ and $w_0=v_0-u_0$, so that $w$ satisfies
    \begin{align}\label{eq:nudge:diff}
    \bdy_tw-\nu\De w+\gam(-\De)^{p+1}w+P_\s(w\cdotp\nabla)w+P_\s(w\cdotp\nabla)u+P_\s(u\cdotp\nabla) w=-\mu P_\s J_{m,k}w,\quad P_\s w=0,\quad w(0)=w_0.
    \end{align}
We will ultimately show that $\lim_{t\goesto\infty}\Sob{w(t)}{\dot{H}^\ell}=0$. Our approach will be to bootstrap convergence in higher-order Sobolev topologies, starting from $H^1$. {Note that we adopt the following convention for applications of the Cauchy-Schwarz inequality or Young's inequality in the analysis below, which we will invoke repeatedly:
	\begin{align}\notag
		ab\leq ca^p+\frac{1}{100}b^{p'},\quad a,b\geq0,\quad \frac{1}p+\frac{1}{p'}=1,
	\end{align}
for some $c>0$ depending on $p,p'$. There is nothing essential about the constant $1/100$, except that we never add more than $50$ of such terms in a given argument. In particular, we make no attempt whatsoever to optimize such constants. This can certainly be done by the interested reader, but in order for this to be a meaningful exercise, one must also carefully track the constants from \cref{lem:nlt:gen}, which we also neglect to do. The important feature that we care to emphasize is the manner in which the constants from the I.O.O.'s appear in the analysis, as the development of these operators is the main novelty of this work.}

\begin{Lem}\label{lem:sync:H1}
Let $m\geq0$ and $1+m\leq k\leq 2+p$. Let $I_{m,k}$ be an  $(\II,\Psi)_\QQ$--subordinate global I.O.O. that is $(m,k)$--generic. Suppose that $\mu,h$ satisfies
    \begin{align}\label{cond:mu:h:H1:B}
   c\sup_q\left(\frac{\mu h_q^2}{\nu}\right)\left[\veps_{0,1}(Q_q)^2+\veps_{0,2}(Q_q)^2+\chi_{(0,\infty)}(\gam)\left(\frac{\mu h_q^2}{\gam}\right)\sum_{j=1}^{[p]}\veps_{0,j+2}(Q_q)^2h_q^{2j}\right]\leq\frac{1}{10\pi_0},
    \end{align}
where $[p]$ denotes the greatest integer $\leq p$. Then for $c,c'>0$ sufficiently large and $\mu$ additionally satisfying
    \begin{align}\label{cond:mu:AOT}
        \mu\geq c'\nu(1+\log(1+G))G,
    \end{align}
one has
    \begin{align}\label{est:H1:w}
     \Sob{\nabla w(t)}{L^2}^2\leq e^{-\frac{3}2\mu t}\Sob{\nabla w_0}{L^2}^2\quad\text{and}\quad \int_0^t\Sob{\nabla w(s)}{L^2}^2ds\leq \frac{\Sob{\nabla w_0}{L^2}^2}{\mu}.
    \end{align}
If, additionally, $I_{m,k}=I_{k+1}$ interpolates optimally, then it suffices for $\mu, h$ to satisfy
    \begin{align}\label{cond:mu:h:H1:opt:B}
        c\sup_q\veps_{1,1}(Q_q)^2\left(\frac{\mu h_q^2}{\nu}\right)\leq\frac{1}{10{\pi_0}},
    \end{align}
where ${\pi_0}$ is the constant from \ref{item:P2}.

On the other hand, if $I_{m,k}$ is uniformly interpolating at scale $h$, then we may instead suppose that $\mu, h$ satisfies   
    \begin{align}\label{cond:mu:h:H1:A}
        c\frac{\mu h^2}{\nu}\left(1+\chi_{(0,\infty)}(\gam)\sum_{j=1}^{[p]}h^{2j}\right)\leq \frac{1}{10},
    \end{align}
in place of \eqref{cond:mu:h:H1:B}. If, additionally, $I_{m,k}=I_{k+1}$ interpolates optimally, then it suffices for $\mu, h$ to satisfy
    \begin{align}\label{cond:mu:h:H1:opt:A}
        c\frac{\mu h^2}{\nu}\leq \frac{1}{10}.
    \end{align}
\end{Lem}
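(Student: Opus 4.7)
The plan is to derive an $\dot{H}^1$ energy differential inequality for $w=v-u$ and verify that, under the hypotheses, the feedback term supplies enough exponential decay to absorb every remaining contribution. To that end I take the $L^2$ inner product of \eqref{eq:nudge:diff} with $-\De w$; since $w$ is divergence-free, the Leray projection can be dropped on pairing. The resulting energy identity is
\begin{align*}
\tfrac{1}{2}\tfrac{d}{dt}\Sob{\nabla w}{L^2}^2+\nu\Sob{\De w}{L^2}^2+\gam\Sob{w}{\dot{H}^{p+2}}^2=T_1+T_2+T_3+F,
\end{align*}
where $T_1,T_2,T_3$ are the three trilinear contributions arising from $(u\cdot\nabla)w$, $(w\cdot\nabla)u$, $(w\cdot\nabla)w$, and $F=-\mu\lb J_{m,k}w,-\De w\rb$.

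For the feedback, I split $J_{m,k}w=w-(w-J_{m,k}w)$ to extract $F=-\mu\Sob{\nabla w}{L^2}^2+\mu\lb w-J_{m,k}w,-\De w\rb$. Because $w$ is mean-free, $\Sob{w-J_{m,k}w}{L^2}\leq \Sob{w-I_{m,k}w}{L^2}$, so \cref{prop:IO:global} with $\ell=0$ applies; Cauchy-Schwarz and a weighted Young inequality decompose the remaining inner product into a sum of pieces of the form $\tfrac{c\mu^2}{\nu}\veps_{0,j}(Q_q)^2h_q^{2j}\Sob{w}{\dot{H}^j(\til{Q}_q)}^2$ indexed by $j$ and $q$, together with a term bounded by $\tfrac{\nu}{20}\Sob{\De w}{L^2}^2$. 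I route the $j=1$ pieces into the decay reservoir $\mu\Sob{\nabla w}{L^2}^2$, the $j=2$ pieces into the dissipation $\nu\Sob{\De w}{L^2}^2$, and the pieces with $j\geq 3$ into the hyperdissipation $\gam\Sob{w}{\dot{H}^{p+2}}^2$ (using $h_q\leq 2\pi$ and Poincar\'e to collapse any index $j>p+2$ down to $p+2$). Summing over $q$ via \cref{lem:multiplicity} and invoking \eqref{cond:mu:h:H1:B}---or \eqref{cond:mu:h:H1:A} when $I_{m,k}$ interpolates uniformly at scale $h$, or the sharper \eqref{cond:mu:h:H1:opt:B}/\eqref{cond:mu:h:H1:opt:A} under optimal interpolation, where only the $j=2$ piece survives---ensures that each absorption carries a prefactor no larger than $1/(10\pi_0)$.

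For the trilinear terms I would use the divergence-free commutator rewriting together with standard two-dimensional estimates: Ladyzhenskaya's inequality to handle the quadratic-in-$w$ term $T_3$, and the Br\'ezis-Gallouet logarithmic inequality to bound $\Sob{u}{L^\infty}$-type factors appearing in $T_1, T_2$, inserting the absorbing-ball bound $\Sob{u}{\dot{H}^1}\leq \sqrt{2}\nu G$ and the $\dot{H}^2$ estimate from \cref{prop:wp:nse}. These produce a bound of the schematic form $c\nu(1+\log(1+G))G\,\Sob{\nabla w}{L^2}^2+\tfrac{\nu}{4}\Sob{\De w}{L^2}^2$ plus a cubic piece in $w$ controlled by a bootstrap using the initial bound on $\Sob{\nabla w_0}{L^2}$. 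Hypothesis \eqref{cond:mu:AOT} then absorbs the logarithmic prefactor into $\tfrac{\mu}{2}\Sob{\nabla w}{L^2}^2$, and combining everything yields the differential inequality $\tfrac{d}{dt}\Sob{\nabla w}{L^2}^2+3\mu\Sob{\nabla w}{L^2}^2\leq 0$; Gronwall and a subsequent time integration give both estimates in \eqref{est:H1:w}.

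The principal obstacle is the simultaneous orchestration of the absorptions in the second paragraph: the interpolation error decomposes into qualitatively distinct pieces that must be individually routed into three different dissipative reservoirs ($\mu$, $\nu$, and $\gam$) while uniformly tracking the patch-level constants $\veps_{0,j}(Q_q)$ and the neighbor-multiplicity $\pi_0$ from \ref{item:P2}. The uniform and optimal-interpolation variants of the lemma correspond to selectively collapsing these pieces, which is precisely why the weaker conditions \eqref{cond:mu:h:H1:A}, \eqref{cond:mu:h:H1:opt:B}, and \eqref{cond:mu:h:H1:opt:A} suffice in their respective regimes.
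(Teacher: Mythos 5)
Your handling of the feedback term is essentially the paper's: split $J_{m,k}w=w-(w-J_{m,k}w)$, note that removing the mean reduces the $L^2$ norm so $\Sob{w-J_{m,k}w}{L^2}\leq\Sob{w-I_{m,k}w}{L^2}$, then deploy \cref{prop:IO:global}, Young, and \cref{lem:multiplicity}, routing the $j=1$, $j=2$, and $j\geq 3$ pieces into the $\mu$, $\nu$, and $\gamma$ reservoirs respectively so that \eqref{cond:mu:h:H1:B} (or its uniform/optimal variants) closes the absorption with margin $1/(10\pi_0)$. That part is fine.

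The trilinear treatment, however, has a genuine gap. In $2$D periodic and mean-free, the orthogonality $\lb(\phi\cdotp\nabla)\phi,\De\phi\rb=0$ holds for any divergence-free $\phi$; applying it to $\phi=w$, $\phi=u$, and $\phi=u\pm w$ gives the identity
\begin{align}
\lb(u\cdotp\nabla)w,\De w\rb+\lb(w\cdotp\nabla)u,\De w\rb=-\lb(w\cdotp\nabla)w,\De u\rb,
\end{align}
which is precisely how the paper collapses $T_1+T_2+T_3$ into a single term and then applies Br\'ezis--Gallouet to $\Sob{w}{L^\infty}$, yielding the factor $\Sob{\nabla w}{L^2}^2\bigl(1+\log\tfrac{\Sob{\De w}{L^2}^2}{\Sob{\nabla w}{L^2}^2}\bigr)(\s_1^{1/2}+G)G$ that the AOT criterion \eqref{cond:mu:AOT} absorbs. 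Your plan instead keeps $T_3=\lb(w\cdotp\nabla)w,\De w\rb$ as a nonzero term to be bounded by Ladyzhenskaya (it is identically zero), which manufactures a spurious cubic-in-$w$ contribution and forces the bootstrap you mention; the lemma as stated asserts clean exponential decay from $t=0$ with no smallness condition on $w_0$, so that bootstrap cannot be made to work without strengthening the hypotheses. Worse, bounding $T_1$ separately via $\Sob{u}{L^\infty}\Sob{\nabla w}{L^2}\Sob{\De w}{L^2}$ and Young leaves a residual $\tfrac{c}{\nu}\Sob{u}{L^\infty}^2\Sob{\nabla w}{L^2}^2$, which even after Br\'ezis--Gallouet on $u$ and the absorbing-ball bound demands $\mu\gtrsim\nu(1+\log(1+G))G^2$ rather than the linear-in-$G$ threshold \eqref{cond:mu:AOT}. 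The combination of all three trilinear terms via the enstrophy identity, with Br\'ezis--Gallouet applied to $w$ (not $u$), is therefore essential and not merely a stylistic choice; as written, your argument does not recover the stated rate or the stated condition on $\mu$.
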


\begin{proof}
Upon taking the $L^2$--inner product of \eqref{eq:nudge:diff} with $-P_\s\De w$, integrating by parts, then using the fact that $\lb (w\cdotp\nabla) w,\De w\rb=0$, we obtain
    \begin{align}\notag
        \frac{1}2\frac{d}{dt}\Sob{\nabla w}{L^2}^2&+\nu\Sob{\De w}{L^2}^2+\mu\Sob{\nabla w}{L^2}^2+\gam\Sob{(-\De)^{1+p/2}w}{L^2}^2\notag\\
        &=\lb w\cdotp\nabla u+u\cdotp\nabla w,\De w\rb-\mu\lb w-J_{m,k}w,\De w\rb=I+II.\notag
    \end{align}
To estimate $I$, we again invoke the property that $\lb w\cdotp\nabla w,\De w\rb=0$, then apply H\"older's inequality, the Br\'ezis-Gallouet inequality, and \cref{prop:wp:nse} to obtain
    \begin{align}
        |I|=|\lb w\cdotp\nabla w,\De u\rb|\leq \Sob{w}{L^\infty}\Sob{\nabla w}{L^2}\Sob{\De u}{L^2}\leq c\nu\Sob{\nabla w}{L^2}^2\left(1+\log\frac{\Sob{\De w}{L^2}^2}{\Sob{\nabla w}{L^2}^2}\right)(\s_1^{1/2}+G)G.\notag
    \end{align}
    
To estimate $II$, we appeal to \cref{prop:IO:global} and invoke \eqref{cond:mu:h:H1:B} and Poincar\'e's inequality, so that
    \begin{align}
    |II_A|
    &\leq c\frac{\mu^2}{\nu}\sum_{j=1}^{k}\sum_q\veps_{0,j}(Q_q)^2h_q^{2j}\Sob{w}{\dot{H}^j(\til{Q}_q)}^2+\frac{\nu}{100}\Sob{\De w}{L^2}^2\notag\\
    &=c\mu\sum_q\left[\veps_{0,1}(Q_q)^2\left(\frac{\mu h_q^2}{\nu}\right)-\frac{1}{10{\pi_0}}\right]\Sob{\nabla w}{L^2(\til{Q}_q)}^2+c\nu\sum_q\left[\veps_{0,2}(Q_q)^2\left(\frac{\mu h_q^2}{\nu}\right)^2-\frac{1}{10\pi_0}\right]\Sob{\De w}{L^2(\til{Q}_q)}^2\notag\\
    &\quad+c\gam\sum_q\sum_{j=1}^{[p]}\left[\veps_{0,j+2}(Q_q)^2\left(\frac{\mu h_q^{2}}{\gam}\right)^2h_q^{2j}-\frac{1}{10{\pi_0}}\right]\Sob{\De w}{\dot{H}^j(\til{Q}_q)}^2\notag\\
    &\quad+\frac{\mu}{10}\Sob{\nabla w}{L^2}^2+\frac{\nu}{10}\Sob{\De w}{L^2}^2+\frac{\gam}{10}\Sob{(-\De)^{1+p/2}w}{L^2}^2\notag\\
    &\leq\frac{\mu}{10}\Sob{\nabla w}{L^2}^2+\frac{\nu}{10}\Sob{\De w}{L^2}^2+\frac{\gam}{10}\Sob{(-\De)^{1+p/2}w}{L^2}^2\notag.
    \end{align}
If $I_{m,k}=I_{k+1}$ is further assumed to interpolate optimally, we then proceed to apply Young's inequality, \eqref{est:IO:optimal} of \cref{prop:IO:global}, and invoke \eqref{cond:mu:h:H1:opt:B}, so that
    \begin{align}
        |II_A'|&\leq c\mu\sum_q\veps_{1,1}(Q_q)^2h_q^2\Sob{\De w}{L^2(\til{Q}_q)}^2+\frac{\mu}{100}\Sob{\nabla w}{L^2}^2\notag\\
        &\leq c\nu\sum_q\left[\veps_{1,1}(Q_q)^2\left(\frac{\mu h_q^{2}}{\nu}\right)-\frac{1}{10{\pi_0}}\right]\Sob{\De w}{L^2(\til{Q}_q)}^2+\frac{\nu}{10}\Sob{\De w}{L^2}^2+\frac{\mu}{100}\Sob{\nabla w}{L^2}^2\notag\\
        &\leq \frac{\nu}{10}\Sob{\De w}{L^2}^2+\frac{\mu}{100}\Sob{\nabla w}{L^2}^2.\notag
    \end{align}

Now let us consider the case when $I_{m,k}$ interpolates uniformly at scale $h$. To estimate $II$, we apply the Cauchy-Schwarz inequality, \cref{cor:IO:global}, Young's inequality, and \eqref{cond:mu:h:H1:A} to obtain
    \begin{align}
        |II_B|&\leq \mu\Sob{w-I_{m,k}w}{L^2}\Sob{\De w}{L^2}\leq c\frac{\mu^2}{\nu}\sum_{j=1}^{k}h^{2j}\Sob{w}{\dot{H}^j}^2+\frac{\nu}{100}\Sob{\De w}{L^2}^2\notag\notag\\
        &= c\mu\left(\frac{\mu h^2}{\nu}\right)\Sob{\nabla w}{L^2}^2+c\nu\left(\frac{\mu h^2}{\nu}\right)^2\Sob{\De w}{L^2}^2+c\gam\left(\frac{\nu}{\gam}\right)\left(\frac{\mu h^2}{\nu}\right)^2\sum_{j=1}^{[p]}h^{2j}\Sob{\De w}{\dot{H}^{j}}^2+\frac{\nu}{100}\Sob{\De w}{L^2}^2\notag\\
        &\leq\frac{\mu}{10}\Sob{\nabla w}{L^2}^2+\frac{\nu}{10}\Sob{\De w}{L^2}^2+\frac{\gam}{10}\Sob{(-\De)^{1+p/2}\De w}{L^2}^2.\notag
    \end{align}
If $I_{m,k}=I_{k+1}$ is further assumed to interpolate optimally, then we integrate by parts first to write
    \begin{align}
        II'=\mu \lb\nabla(w-I_{k+1}w),\nabla w\rb,\notag
    \end{align}
where we have used the fact that $\nabla J_{k+1}=\nabla I_{k+1}$. If $I_{k+1}$ also interpolates uniformly at scale $h$, then by \eqref{est:IO:regularly:optimally} of \cref{cor:IO:global}, Young's inequality, and \eqref{cond:mu:h:H1:opt:A}, it follows that
    \begin{align}
        |II'_B|\leq \mu\Sob{\nabla(w-I_{k+1}w)}{L^2}\Sob{\nabla w}{L^2}\leq \mu ch\Sob{\De w}{L^2}\Sob{\nabla w}{L^2}\leq\frac{\nu}{10}\Sob{\De w}{L^2}^2+\frac{\mu}{10}\Sob{\nabla w}{L^2}^2.\notag
    \end{align}

Upon combining $I$ and either $II_A$ or $II_A'$ or $II_B$ or $II_B'$, we have
    \begin{align}
        \frac{d}{dt}\Sob{\nabla w}{L^2}^2+\frac{3}2\gam\Sob{(-\De)^{1+p/2} w}{L^2}^2\leq -\mu\left\{\frac{8}5+\frac{\nu}{\mu}\left[\frac{\Sob{\De w}{L^2}^2}{\Sob{\nabla w}{L^2}^2}-(\s_1^{1/2}+G)G\left(1+\log\frac{\Sob{\De w}{L^2}^2}{\Sob{\nabla w}{L^2}^2}\right)\right]\right\}\Sob{\nabla w}{L^2}^2.\notag
    \end{align}
Since $\mu$ additionally satisfies \eqref{cond:mu:AOT} (see \cite[Lemma 2]{AzouaniOlsonTiti2014}), it follows that
    \begin{align}\notag
        \frac{d}{dt}\Sob{\nabla w}{L^2}^2\leq -\frac{3}2\mu\Sob{\nabla w}{L^2}^2,
    \end{align}
so that Gronwall's inequality yields
    \begin{align}\notag
        \Sob{\nabla w(t)}{L^2}^2\leq e^{-\frac{3}2\mu t}\Sob{\nabla w_0}{L^2}^2,
    \end{align}
as desired.
\end{proof}

We are now ready to prove \cref{thm:sync:Hk}.

\begin{proof}[Proof of \cref{thm:sync:Hk}]
Observe that \cref{lem:sync:H1} covers the case $m=0,1$. It suffices then to consider $2\leq\ell\leq m$. Let $\bdy^\al$ denote any partial with respect to $x$ of order $|\al|=\ell$. Upon applying $\bdy^\al$ to \eqref{eq:nudge:diff}, taking the $L^2$--inner product of the result with $\bdy^\al w$, integrating by parts, then summing over all $|\al|=\ell$, we obtain the following energy balance
    \begin{align}\label{eq:nudge:diff:Hk}
        \frac{1}2&\frac{d}{dt}\Sob{w}{\dot{H}^\ell}^2+\nu\Sob{\nabla w}{\dot{H}^\ell}^2+\mu\Sob{w}{\dot{H}^\ell}^2+\gam\Sob{(-\De)^{p/2}\nabla w}{\dot{H}^\ell}^2\notag\\
        =&-\sum_{|\al|=\ell}\left(\lb \bdy^\al(w\cdotp\nabla)w+\bdy^\al(u\cdotp\nabla)w+\bdy^\al(w\cdotp\nabla)u,\bdy^\al w\rb\right)+\mu\sum_{|\al|=\ell}\lb\bdy^\al(w-J_{m,k}w),\bdy^\al w\rb=I+II
    \end{align}
We estimate each the terms $I$ and $II$. Observe that $I$ may be expanded as
    \begin{align}
    I&=-\lb \bdy^{\al}(w\cdotp\nabla)w, \bdy^\al w\rb-\sum_{|\al|=\ell}\sum_{\be\leq\al}c_{\al,\be}\lb \bdy^{\al-\be}w^j\bdy_j\bdy^\be u^i, \bdy^\al w^i\rb-\sum_{|\al|=\ell}\sum_{\be<\al}c_{\al,\be}\lb \bdy^{\al-\be}u^j\bdy_j\bdy^\be w^i, \bdy^\al w^i\rb\notag\\
    &=I_a+I_b+I_c.\notag    
    \end{align}
We treat $I_a$ with \eqref{est:buvv:1} of \cref{lem:nlt:gen}, then apply Young's inequality to obtain
    \begin{align}\notag
        |I_a|&\leq c\Sob{\nabla w}{L^2}\Sob{\nabla w}{\dot{H}^\ell}\Sob{w}{\dot{H}^\ell}\leq \frac{\nu}{100}\Sob{\nabla w}{\dot{H}^\ell}^2+\frac{c}{\nu}\Sob{\nabla w}{L^2}^2\Sob{w}{\dot{H}^\ell}^2.
    \end{align}
We treat $I_b$ by first integrating by parts, then applying \eqref{est:bvvu} of \cref{lem:nlt:gen} and Young's inequality to obtain 
    \begin{align}
       |I_b|
        &\leq c\Sob{u}{\dot{H}^\ell}\Sob{ w}{\dot{H}^{\ell+1}}^{\frac{2\ell-1}{\ell}}\Sob{\nabla w}{L^2}^{\frac{1}{\ell}}\leq \frac{\nu}{100}\Sob{\nabla w}{\dot{H}^{\ell}}^2+c\nu\left(\frac{\Sob{u}{\dot{H}^\ell}}{\nu}\right)^{2\ell}\Sob{w}{\dot{H}^1}^2.\notag
    \end{align}
For $I_c$, we apply \eqref{est:buvv} of \cref{lem:nlt:gen} and Young's inequality to obtain
    \begin{align}
        |I_c|&\leq c\Sob{u}{\dot{H}^\ell}\left(\Sob{\nabla w}{\dot{H}^{\ell}}^{\frac{\ell+2}{\ell+1}}\Sob{\nabla w}{L^2}^{\frac{\ell}{\ell+1}}+\Sob{\nabla w}{\dot{H}^{\ell}}^{\frac{2(\ell-1)}{\ell}}\Sob{\nabla w}{L^2}^{\frac{2}{\ell}}\right)\notag\\
        &\leq c\nu\left[\left(\frac{\Sob{u}{\dot{H}^\ell}}{\nu}\right)^{\frac{2(\ell+1)}{\ell}}+\left(\frac{\Sob{u}{\dot{H}^\ell}}{\nu}\right)^{\frac{1}\ell}\right]\Sob{w}{\dot{H}^1}^2+\frac{\nu}{100}\Sob{\nabla w}{\dot{H}^\ell}^2.\notag
    \end{align}
Upon combining the estimates $I_a$, $I_b$, $I_c$, then applying \cref{thm:abs:ball:Hk}, we obtain
    \begin{align}\notag
        |I|\leq \frac{\nu}{10}\Sob{\nabla w}{\dot{H}^\ell}^2+\frac{c}\nu\Sob{\nabla w}{L^2}^2\Sob{w}{\dot{H}^\ell}^2+c\nu\left(1+\left(\s_{\ell-1}^{1/\ell}+G\right)^{\ell-1}G\right)^{2\ell}\Sob{w}{\dot{H}^1}^2
    \end{align}
    
Now we treat $II$. First observe that upon integrating by parts, we get
    \begin{align}\label{est:II:approach}
        II=-\mu\sum_{|\al|=\ell}\sum_{\substack{\al'<\al\\ |\al'|=1}}\lb \bdy^{\al-\al'}(w-I_{m,k}w),\bdy^{\al+\al'}w\rb.
    \end{align}
Note that we used the fact that $P_\s$ commute with derivatives and that $\bdy^{\be}J_{m,k}=\bdy^{\be}I_{m,k}$, for any $|\be|>0$. then by the Cauchy-Schwarz inequality, \cref{prop:IO:global}, Young's inequality, and the assumptions that $1\leq k\leq 2+p$, we have 
    \begin{align}\label{est:diff:HK:IV:gen}
    |II|\leq&c\frac{\mu^2}{\nu}\sum_{j=1}^{k}\sum_q\veps_{\ell-1,j}(Q_q)^2h_q^{2(j-\ell+1)}\Sob{w}{\dot{H}^{j}(\til{Q}_q)}^2+\frac{\nu}{100}\Sob{\nabla w}{\dot{H}^\ell}^2\notag\\
   \leq& c\frac{\mu^2}{\nu}\sum_{j=1}^{\ell-1}\sum_q\veps_{\ell-1,j}(Q_q)^2h_q^{2(j-\ell+1)}\Sob{w}{\dot{H}^{j}(\til{Q}_q)}^2+\frac{\nu}{100}\Sob{\nabla w}{\dot{H}^\ell}^2\notag\\
        &+c\mu\sum_q\veps_{\ell-1,\ell}(Q_q)^2\left(\frac{\mu h_q^2}{\nu}\right)\Sob{w}{\dot{H}^\ell(\til{Q}_q)}^2+c\nu\sum_q\veps_{\ell-1,\ell+1}(Q_q)^2\left(\frac{\mu h_q^2}{\nu}\right)^2\Sob{\nabla w}{\dot{H}^\ell(\til{Q}_q)}^2\notag\\
        &+c\gam\left(\frac{\nu}{\gam}\right)\chi_{[2,\infty)}(\ell)\sum_q\left(\frac{\mu h_q^2}\nu\right)^2\sum_{j=1}^{k-\ell-1}\veps_{\ell-1,j+\ell+1}(Q_q)^2h_q^{2j}\Sob{\nabla w}{\dot{H}^{\ell+j}(\til{Q}_q)}^2.
    \end{align}
On the other hand, if $I_{m,k}$ is uniformly interpolating at scale $h$, we estimate as above and apply \cref{cor:IO:global} in place of \cref{prop:IO:global} to obtain
    \begin{align}
        |II'|
        \leq& c\frac{\mu^2}{\nu}\sum_{j=1}^{k}h^{2(j-\ell+1)}\Sob{w}{\dot{H}^{j}}^2+\frac{\nu}{100}\Sob{\nabla w}{\dot{H}^\ell}^2\notag\\
        &+c\mu\left(\frac{\mu h^2}{\nu}\right)\Sob{w}{\dot{H}^\ell}^2+c\nu\left(\frac{\mu h^2}{\nu}\right)^2\Sob{\nabla w}{\dot{H}^\ell}^2+c\gam\left(\frac{\nu}{\gam}\right)\left(\frac{\mu h^2}\nu\right)^2\left(\sum_{j=1}^{[p]}h^{2j}\right)\Sob{(-\De)^{p/2}\nabla w}{\dot{H}^{\ell}}^2.\label{est:diff:Hk:IV:unif:interpol}
    \end{align}
    
Finally, upon returning to \eqref{eq:nudge:diff:Hk} and combining the estimates for $I$ and either $II$ or $II'$, then applying the Poincar\'e inequality and \eqref{cond:mu:h:B} or \eqref{cond:mu:h:A}, respectively, we see that     
    \begin{align}\label{est:sync:Hk}
      \frac{d}{dt}&\Sob{w}{\dot{H}^\ell}^2+\frac{9}5\nu\Sob{\nabla w}{\dot{H}^\ell}^2+\mu\left(\frac{9}5-c\frac{\nu}{\mu}\frac{\Sob{\nabla w}{L^2}^2}{\nu^2}\right)\Sob{w}{\dot{H}^\ell}^2+\frac{3}2\gam\Sob{(-\De)^{p/2}\nabla w}{\dot{H}^\ell}^2\\
        \leq&c\nu\left(1+\left(\s_{\ell-1}^{1/\ell}+G\right)^{\ell-1}G\right)^{2\ell}\Sob{\nabla w}{L^2}^2+c\nu\left(\frac{\mu}{\nu}\right)^2(\inf_qh_q)^{2-\ell}\Sob{w}{\dot{H}^{\ell-1}}^2.\notag
    \end{align}   
Lastly, we invoke the fact that $u_0\in\B_\ell$, the estimate \eqref{est:H1:w} of \cref{lem:sync:H1}, and ultimately Gronwall's inequality to deduce 
    \begin{align}\label{est:Hk:sync:final}
        \Sob{w(t)}{\dot{H}^\ell}^2\leq& e^{-\frac{3}2\mu t}\left[\Sob{w_0}{\dot{H}^\ell}^2+c\frac{\nu}{\mu}\left(1+\left(\s_{\ell-1}^{1/\ell}+G\right)^{\ell-1}G\right)^{2\ell}\Sob{\nabla w_0}{L^2}^2\right]e^{c\frac{\Sob{\nabla w_0}{L^2}^2}{\mu^2}}\notag\\
        &+c\nu\left(\frac{\mu}{\nu}\right)^2h^{2-\ell} \left(\int_0^te^{-\mu(t-s)}\Sob{w(s)}{\dot{H}^{\ell-1}}^2ds\right)e^{c\frac{\Sob{\nabla w_0}{L^2}^2}{\mu^2}},
    \end{align}
for all $2\leq\ell \leq m$. The remainder of the proof can be completed by a basic induction argument, where the assumption of the induction step is $\Sob{w(s)}{\dot{H}^{\ell-1}}^2\leq O(e^{-\mu s})$.
\end{proof}

\begin{proof}[Proof of \cref{thm:sync:Hk:mult}]
We let $\ell=k$ and proceed exactly as in proof of \cref{thm:sync:Hk}, except that we instead treat $II$ without integrating by parts in \eqref{est:II:approach} and use \eqref{est:IO:optimal} of \cref{prop:IO:global} to obtain 
    \begin{align}\notag
        |II|&\leq c\mu \Sob{w-I_{k+1}w}{\dot{H}^k}\Sob{w}{\dot{H}^k}\leq c\mu\Sob{w-I_{k+1}}{\dot{H}^k}^2+\frac{\mu}{100}\Sob{w}{\dot{H}^k}^2\notag\\
        &\leq c\nu\sum_q\veps_{k,k+1}(Q_q)^2\left(\frac{\mu h_q^2}{\nu}\right)\Sob{\nabla w}{\dot{H}^{k}(Q_q)}^2+\frac{\mu}{100}\Sob{w}{\dot{H}^k}^2.\notag
    \end{align}
Now observe that from \cref{lem:multiplicity}, we have
    \begin{align}
        \nu\Sob{\nabla w}{\dot{H}^k}^2\geq \frac{\nu}{{\pi_0}}\sum_q\Sob{\nabla w}{\dot{H}^k(Q_q)}^2.\notag
    \end{align}
Hence, arguing as we did in \cref{thm:sync:Hk}, we arrive at
   \begin{align}
      &\frac{d}{dt}\Sob{w}{\dot{H}^\ell}^2+\nu\sum_q\left(\frac{9}{5{\pi_0}}-\veps_{k,k+1}(Q_q)^2\left(\frac{\mu h_q^2}{\nu}\right)\right)\Sob{\nabla w}{\dot{H}^\ell(Q_q)}^2+\mu\left(\frac{9}5-\frac{\nu}{\mu}\frac{\Sob{\nabla w}{L^2}^2}{\nu^2}\right)\Sob{w}{\dot{H}^\ell}^2\notag\\
      &\leq c\nu\left(1+\left(\s_{\ell-1}^{1/\ell}+G\right)^{\ell-1}G\right)^{2\ell}\Sob{\nabla w}{L^2}^2.\notag
    \end{align}
Upon invoking \eqref{cond:mu:h:opt:B}, then applying of Gronwall's inequality and \cref{lem:sync:H1}, we obtain \eqref{eq:sync:gen:mult}.

On the other hand, if $I_{k+1}$ is also uniformly interpolating at scale $h$, then
    \begin{align}
        |II'|&\leq  c\nu\left(\frac{\mu h^2}{\nu}\right)\Sob{\nabla w}{\dot{H}^{k}}^2+\frac{\mu}{100}\Sob{w}{\dot{H}^k}^2.\notag
    \end{align}
Arguing we did before, we obtain
    \begin{align}
      \frac{d}{dt}\Sob{w}{\dot{H}^\ell}^2+\nu\left(\frac{9}{5}-c\left(\frac{\mu h^2}{\nu}\right)\right)\Sob{\nabla w}{\dot{H}^\ell}^2+\mu\left(\frac{9}5-\frac{\nu}{\mu}\frac{\Sob{\nabla w}{L^2}^2}{\nu^2}\right)\Sob{w}{\dot{H}^\ell}^2\leq c\nu\left(1+\frac{\Sob{u}{\dot{H}^\ell}}{\nu}\right)^{2\ell}\Sob{\nabla w}{L^2}^2,\notag
    \end{align}
We then apply \eqref{cond:mu:h:opt:A} and deduce \eqref{eq:sync:gen:mult} once again.

\end{proof}

\subsection*{Acknowledgements}
The work
of Vincent R. Martinez was partially supported by the award PSC-CUNY 64335-00 52, jointly funded by The Professional Staff Congress and The City University of New York. The authors would like to thank Michael S. Jolly and Ali Pakzad for insightful discussions in the course of this work, as well as the referees for their careful reading of the manuscript and the generous comments they shared to improve it.
\appendix

\section{Well-posedness of nudging-based equation in higher-order Sobolev spaces}\label{sect:app:wp:ng}

We will now supply the proof of \cref{thm:wp:ng} in $\dot{H}^k_\s$, where $k\geq2$. Recall that we will consider the system \eqref{eq:nse:proj:hyper}. Recall that for $p>0$, we denote by $(-\De)^p$ the operator defined by $\widehat{(-\De)^p\phi}(k)=|k|^{2p}\hat{\phi}(k)$, whenever $k\notin\Z^2\smod\{0\}$. Given $\gam\geq0$, consider
    \begin{align}\label{eq:nse:proj:hyper:app}
        \bdy_tu-\nu\De u+\gam(-\De)^{p+1}u+P_\s(u\cdotp\nabla)u=P_\s f,\quad P_\s u=0,\quad u(0)=u_0,
    \end{align}
where $u_0\in\dot{H}^k_\s$ and $f\in \dot{H}^{k-1}_\s$, where $k\geq2$. We note that the analog of \cref{prop:wp:nse} still holds for \eqref{eq:nse:proj:hyper:app}, for all $p>0$, as well as all bounds for the $\gam=0$ case. For $m\geq0$ such that $k\geq m+1$, let $I_{m,k}$ an $(\II,\Psi)_\QQ$--subordinate global I.O.O., let
    \begin{align}\label{def:fmu}
        f_\mu := P_\s f + \mu P_\s J_{m,k} u,
    \end{align}
where $J_{m,k}:=I_{m,k}-\lb I_{m,k}\rb$, $\lb I_{m,k}\rb$ denotes the operator such that $\lb I_{m,k}\rb\phi=(2\pi)^{-2}\int_{\T^2}I_{m,k}\phi(x)dx$, and $P_\s$ denotes the Leray projector (see \eqref{def:leray}). Let $u\in C([0,\infty);\dot{H}^k_\s)\cap L^2_{loc}(0,\infty;\dot{H}^{k+1})$ denote the unique, global solution of \eqref{eq:nse:proj:hyper:app} corresponding to $u_0\in\dot{H}^k_\s$ and $f$ guaranteed by \cref{prop:wp:nse}. Then for $v_0\in\dot{H}^k_\s$, we consider the initial value problem
    \begin{align}\label{eq:nse:ng:proj:hyper:app}
        \bdy_tv-\nu\De v+\gam(-\De)^{p+1}v+P_\s(v\cdotp\nabla v)v=P_\s f-\mu P_\s J_{m,k}(v-u),\quad P_\s v=0,\quad v(0)=v_0.
    \end{align}

We prove the existence of solutions to \eqref{eq:nse:ng:proj:hyper} via Galerkin approximation. Let $P_N$ denote the Galerkin projection at level $N>0$ and let $v_N$ denote the unique solution to the following system of ODEs
    \begin{align}\label{eq:nse:ng:gal}
        \begin{split}
        &\frac{dv_N}{dt} -\nu\De v_N +\gam(-\De)^{p+1} v_N+ P_N P_\s(v_N\cdotp\nabla)v_N = P_Nf_\mu- \mu P_N J_{m,k}v_N,\quad\nabla\cdotp v_N=0,\quad v_N(0,x) = P_N v_0(x),
        \end{split}
    \end{align}
where $p\geq0$ and $\gam\geq0$. We will develop uniform bounds for $v_N$ in the appropriate topology over the maximal interval of existence $[0,T_N)$, independent of $N$. This will imply global existence for the projected system, and therefore global existence for \eqref{eq:nse:ng:proj:hyper:app}. Uniqueness and continuity with respect to initial data will follow along the same lines as in \cite{AzouaniOlsonTiti2014}.

To prove \cref{thm:wp:ng}, we will make use of the following lemma, which controls the growth of $J_{m,k}u$, where $u$ is a strong solution to \eqref{eq:nse:proj:hyper} evolving within an absorbing ball.

\begin{Lem}\label{lem:Jmk:Hl}
Let $m\geq0$ and $k\geq2$ such that $k\geq m+1$. Given $f\in \dot{H}^{k-1}$, let $\B_k$ denote the $H^k$--absorbing ball of \eqref{eq:nse:proj:hyper}. Suppose that $I_{m,k}$ interpolates uniformly at scale $h$. Then given $u_0\in\B_k$, for all $1\leq\ell\leq m$, there exists a universal constant $c>0$ such that
    \begin{align}\label{est:Jmk:Hl}
        \sup_{t\geq0}\left(\frac{\Sob{J_{m,k}u(t)}{\dot{H}^\ell}}{\nu}\right)\leq c\left[(\s_{\ell-1}^{1/\ell}+G)^{\ell-1}+\sum_{j=1}^{k}h^{j-\ell}(\s_{j-1}^{1/j}+G)^{j-1}\right]G,
    \end{align} 
where $u$ denotes the corresponding unique strong solution of \eqref{eq:nse:proj:hyper}.
\end{Lem}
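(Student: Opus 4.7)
The plan is to reduce the bound on $J_{m,k}u$ in $\dot{H}^\ell$ to a straightforward combination of the higher-order absorbing ball estimates of \cref{thm:abs:ball:Hk} and the uniform-scale interpolation error estimates from \cref{cor:IO:global}. The first observation is that $\lb I_{m,k}\rb u$ is a spatial constant, so $\bdy^\al\lb I_{m,k}\rb u\equiv 0$ whenever $|\al|\geq 1$. Consequently, for $\ell\geq 1$, one has $\Sob{J_{m,k}u}{\dot H^\ell}=\Sob{I_{m,k}u}{\dot H^\ell}$, which reduces matters to bounding $\Sob{I_{m,k}u}{\dot H^\ell}$.

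The second step is to apply the triangle inequality
    \begin{align}\notag
    \Sob{I_{m,k}u}{\dot H^\ell}\leq \Sob{u}{\dot H^\ell}+\Sob{u-I_{m,k}u}{\dot H^\ell}.
    \end{align}
Since $u_0\in\B_k$ and $f\in\dot H^{k-1}_\s$, \cref{thm:abs:ball:Hk} yields the uniform-in-time estimate $\Sob{u(t)}{\dot H^\ell}/\nu\leq c(\s_{\ell-1}^{1/\ell}+G)^{\ell-1}G$ (for $\ell=1$ this reduces to the standard $H^1$ absorbing ball bound from \cref{prop:wp:nse}), which handles the first term. For the second term, the hypothesis that $I_{m,k}$ interpolates uniformly at scale $h$ combined with the assumption $0\leq \ell \leq m$ places us in the regime of \eqref{est:IO:regularly} of \cref{cor:IO:global}, giving
    \begin{align}\notag
    \Sob{u-I_{m,k}u}{\dot H^\ell}^2\leq c\sum_{j=1}^{k}h^{2(j-\ell)}\Sob{u}{\dot H^j}^2.
    \end{align}

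The third step is then to apply \cref{thm:abs:ball:Hk} individually at each integer $j$ in the range $1\leq j\leq k$, which is valid since $f\in\dot H^{k-1}_\s\subset\dot H^{j-1}_\s$ for every such $j$; this produces $\Sob{u(t)}{\dot H^j}/\nu\leq c(\s_{j-1}^{1/j}+G)^{j-1}G$ uniformly in time. Taking square roots in the interpolation estimate, summing, and combining with the $\dot H^\ell$ bound of the first term yields \eqref{est:Jmk:Hl}. There is no substantive obstacle here; the entire argument is bookkeeping over the indices $j$ and a direct application of results already established, so the proof is short.
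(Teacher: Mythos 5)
Your proof is correct and follows essentially the same route as the paper's: reduce the bound on $J_{m,k}u$ to one on $I_{m,k}u$, apply the triangle inequality, and then invoke \cref{cor:IO:global} together with the absorbing-ball estimates from \cref{prop:wp:nse} (for $j=1$) and \cref{thm:abs:ball:Hk} (for $j\geq2$). Your justification of the reduction step --- that $\lb I_{m,k}\rb u$ is a spatial constant, so $\Sob{J_{m,k}u}{\dot H^\ell}=\Sob{I_{m,k}u}{\dot H^\ell}$ for $\ell\geq1$ --- is in fact a cleaner explanation than the paper's appeal to boundedness of the Leray projection.
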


\begin{proof}
Since $\ell\geq1$, by the boundedness of the Leray-projection and the triangle inequality, observe that
    \begin{align}\notag
        \Sob{J_{m,k}u}{\dot{H}^\ell}\leq \Sob{I_{m,k}u}{\dot{H}^\ell}\leq \Sob{u-I_{m,k}u}{\dot{H}^\ell}+\Sob{u}{\dot{H}^\ell}.
    \end{align}
From  \cref{cor:IO:global}, \cref{prop:wp:nse} (for $\ell=1$), and \cref{thm:abs:ball:Hk} (for $\ell\geq2$), it follows that
    \begin{align}
        \Sob{u-I_{m,k}u}{\dot{H}^\ell}&\leq  c\sum_{j=1}^{k}h^{j-\ell}\Sob{u}{\dot{H}^{j}}\leq c\sum_{j=1}^{k}h^{j-\ell}(\s_{j-1}^{1/j}+G)^{j-1}G,\notag\\
        \Sob{u}{\dot{H}^{\ell}}&\leq c\nu(\s_{\ell-1}^{1/\ell}+G)^{\ell-1}G.\notag
    \end{align}
Combining these estimates, then taking the supremum over $t\geq0$ yields \eqref{est:Jmk:Hl}.
\end{proof}

\begin{Lem}\label{lem:H1:gal:apriori:app}
Let $\mu>0$ and $k\geq m+1$ such that $k\geq2$. Let
    \begin{align}
        F_1&:=\left[\left(\frac{\nu}{\mu}\right)^2\s_1^2G^2+\til{U}_1^2\right]^{1/2},\label{def:F1}\\
        \til{U}_{1}&:=\frac{\sup_{t\geq0}\Sob{I_{m,k}u(t)}{\dot{H}^1}}{\nu}.\label{def:U1}
    \end{align}
There exists a universal constant $c>0$ such that 
 \begin{align}\label{est:H1:gal:Quniform}
        \frac{d}{dt}\Sob{\nabla v_N}{L^2}^2+&\frac{9}{5{\pi_0}}\sum_q\left(\nu\Sob{\De v_N}{L^2(Q_q)}^2++2\gam\Sob{(-\De)^{p/2+1}v_N}{L^2(Q_q)}^2+\mu\Sob{\nabla v_N}{L^2(Q_q)}^2\right)\notag\\
          &\leq c\mu\nu^2 F_1^2+ c\frac{\mu^2}{\nu}\sum_{j=1}^{k}\sum_q\veps_{0,j}(Q_q)^2h_q^{2j}\Sob{v_N}{\dot{H}^{j}(\til{Q}_q)}^2,
    \end{align}
where $\pi_0$ is the constant from \ref{item:P2}. Moreover, if $I_{m,k}$ interpolates uniformly at scale $h$, then
    \begin{align}\label{est:H1:gal:uniform}
    \frac{d}{dt}\Sob{\nabla v_N}{L^2}^2+\frac{9}{5}\nu\Sob{\De v_N}{L^2}^2+2\gam\Sob{(-\De)^{p/2+1}v_N}{L^2}^2+\frac{9}{5}\mu\Sob{\nabla v_N}{L^2}^2\leq c\mu\nu^2 F_1^2+ c\frac{\mu^2}{\nu}\sum_{j=1}^{k}h^{2j}\Sob{v_N}{\dot{H}^{j}}^2,
    \end{align}
holds for all $\mu>0$, $0<h\leq 2\pi$, and $N>0$.
\end{Lem}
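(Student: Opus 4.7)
The plan is to test the Galerkin system \eqref{eq:nse:ng:gal} against $-\De v_N$ in $L^2$, use the divergence-free identity to kill the nonlinearity, and then organise the result into a dissipation inequality in which the forcing contribution assembles into the quantity $F_1$ from \eqref{def:F1} and the interpolation error feeds directly into \cref{prop:IO:global} or \cref{cor:IO:global}.

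First, I would take the $L^2$-inner product of \eqref{eq:nse:ng:gal} with $-\De v_N$; the standard identity $\lb(v_N\cdotp\nabla)v_N,-\De v_N\rb=0$ holds in 2D periodic, and $-\De v_N$ lies in the range of $P_N$ so that projection drops. Using $\mu\lb J_{m,k}v_N,-\De v_N\rb=-\mu\Sob{\nabla v_N}{L^2}^2+\mu\lb v_N-J_{m,k}v_N,-\De v_N\rb$ together with the observation that, since $v_N$ and $\De v_N$ are mean-free, the constant difference between $J_{m,k}$ and $I_{m,k}$ pairs trivially against $\De v_N$, the resulting identity reads
\begin{align*}
\frac{1}{2}\frac{d}{dt}\Sob{\nabla v_N}{L^2}^2&+\nu\Sob{\De v_N}{L^2}^2+\gam\Sob{(-\De)^{p/2+1}v_N}{L^2}^2+\mu\Sob{\nabla v_N}{L^2}^2\\
&=\lb P_\s f+\mu I_{m,k}u,-\De v_N\rb+\mu\lb v_N-I_{m,k}v_N,-\De v_N\rb.
\end{align*}

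Next, I would estimate the first bracket by integrating by parts and applying Young's inequality weighted so as to leave only a small multiple of $\mu\Sob{\nabla v_N}{L^2}^2$,
\begin{align*}
|\lb P_\s f+\mu I_{m,k}u,-\De v_N\rb|\leq \frac{c}{\mu}\Sob{P_\s f}{\dot{H}^1}^2+c\mu\Sob{I_{m,k}u}{\dot{H}^1}^2+\frac{\mu}{100}\Sob{\nabla v_N}{L^2}^2,
\end{align*}
and recognise $\Sob{P_\s f}{\dot{H}^1}=\nu^2\s_1G$ and, via \eqref{def:U1}, $\Sob{I_{m,k}u}{\dot{H}^1}\leq\nu\til{U}_1$, so the first two summands combine to a multiple of $\mu\nu^2F_1^2$. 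For the second bracket, Cauchy--Schwarz followed by Young weighted by $\nu$ gives
\begin{align*}
\mu|\lb v_N-I_{m,k}v_N,-\De v_N\rb|\leq\frac{c\mu^2}{\nu}\Sob{v_N-I_{m,k}v_N}{L^2}^2+\frac{\nu}{100}\Sob{\De v_N}{L^2}^2,
\end{align*}
and I would then invoke \cref{prop:IO:global} at $\ell=0$ to bound $\Sob{v_N-I_{m,k}v_N}{L^2}^2$ by the partition-weighted sum appearing on the right of \eqref{est:H1:gal:Quniform}, or \cref{cor:IO:global} to bound it by the global sum on the right of \eqref{est:H1:gal:uniform}.

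Finally, I would absorb the $\mu/100$ and $\nu/100$ fractions into the dissipative terms on the left, multiply through by $2$, and thereby arrive directly at \eqref{est:H1:gal:uniform}. For the $\QQ$-uniform statement \eqref{est:H1:gal:Quniform}, I would then lower-bound each global $L^2$-square on the left by $\pi_0^{-1}$ times the sum of its squared local norms over $\QQ$ via \cref{lem:multiplicity}, applied pointwise to $|\De v_N|^2$, $|(-\De)^{p/2+1}v_N|^2$, and $|\nabla v_N|^2$; this is what produces the factor $9/(5\pi_0)$. I expect no serious obstacle here; the main bookkeeping subtlety is the cancellation of the additive constant between $J_{m,k}$ and $I_{m,k}$ against the mean-free test function $-\De v_N$, which is what permits the interpolation error estimates of \cref{prop:IO:global} and \cref{cor:IO:global} to be applied without modification.
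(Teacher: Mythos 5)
Your argument is correct and follows essentially the same route as the paper: test against $-\De v_N$, kill the nonlinearity by the 2D divergence-free identity, split the right-hand side into the $f_\mu$ contribution (which assembles into $c\mu\nu^2F_1^2$ via Young's inequality and the definitions of $G$, $\s_1$, $\til{U}_1$) and the interpolation error $\mu\lb v_N-I_{m,k}v_N,-\De v_N\rb$ handled by \cref{prop:IO:global}/\cref{cor:IO:global}, then localize with \cref{lem:multiplicity}. The only blemish is a sign slip in your intermediate identity for the feedback term (it should read $-\mu\lb J_{m,k}v_N,-\De v_N\rb=-\mu\Sob{\nabla v_N}{L^2}^2+\mu\lb v_N-J_{m,k}v_N,-\De v_N\rb$), but your final energy identity is the correct one, so nothing substantive is affected.
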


\begin{proof}
Upon taking the $L^2$--inner product of \eqref{eq:nse:ng:gal} with $-\De v_N$, using the identity $\lb P_\s(v_N\cdotp\nabla)v_N,\De v_N\rb=0$, and integrating by parts, we obtain
    \begin{align}
        \frac{1}2\frac{d}{dt}\Sob{\nabla v_N}{L^2}^2+\nu\Sob{\De v_N}{L^2}^2+\gam\Sob{(-\De)^{p/2+1}v_N}{L^2}^2+\mu\Sob{\nabla v_N}{L^2}^2&=\lb \nabla f_\mu, \nabla v_N\rb+\mu\lb v_N-J_{m,k}v_N,\De v_N\rb\notag\\
        &=I^1+II^1.\notag
    \end{align} 
We estimate $I^1$ by applying the Cauchy-Schwarz inequality, Young's inequality, so that
    \begin{align}
        |I^1|&\leq c\nu^3\left[\left(\frac{\nu}{\mu}\right)\left(\frac{\Sob{P_\s f}{\dot{H}^1}}{\nu^2}\right)^2+\left(\frac{\mu}\nu\right)\left(\frac{\Sob{I_{m,k}u}{\dot{H}^1}}{\nu}\right)^2\right]+\frac{\mu}{100}\Sob{\nabla v_N}{L^2}^2\notag\\
        &\leq c\mu\nu^2 \left[\left(\frac{\nu}{\mu}\right)^2\s_1^2G^2+\til{U}_1^2\right]+\frac{\mu}{100}\Sob{\nabla v_N}{L^2}^2.\notag
    \end{align}
    
On the other hand, we treat $II^1$ by applying the Cauchy-Schwarz inequality, \cref{cor:IO:global}, and Young's inequality, to obtain
    \begin{align}\label{est:II1:Quniform}
          |II^1|
        \leq c\frac{\mu^2}{\nu}\sum_{j=1}^{k}\sum_q\veps_{0,j}(Q_q)^2h_q^{2j}\Sob{v_N}{\dot{H}^{j}(\til{Q}_q)}^2+\frac{\nu}{100}\Sob{\De v_N}{L^2}^2.
    \end{align} 
Upon combining $I^1$, \eqref{est:II1:Quniform}, and \cref{lem:multiplicity}, we obtain
    \begin{align}
          \frac{d}{dt}\Sob{\nabla v_N}{L^2}^2+&\frac{9}{5{\pi_0}}\sum_q\left(\nu\Sob{\De v_N}{L^2(Q_q)}^2++2\gam\Sob{(-\De)^{p/2+1}v_N}{L^2(Q_q)}^2+\mu\Sob{\nabla v_N}{L^2(Q_q)}^2\right)\notag\\
          &\leq c\mu\nu^2 \left[\left(\frac{\nu}{\mu}\right)^2\s_1^2G^2+\til{U}_1^2\right]+ c\frac{\mu^2}{\nu}\sum_{j=1}^{k}\sum_q\veps_{0,j}(Q_q)^2h_q^{2j}\Sob{v_N}{\dot{H}^{j}(\til{Q}_q)}^2.\notag 
    \end{align}

On the other hand, if $I_{m,k}$ interpolates uniformly at scale $h$, then
    \begin{align}\label{est:II1:uniform}
        |II^1|&\leq \mu\Sob{v_N-J_{m,k}v_N}{L^2}\Sob{ \De v_N}{L^2}\leq c\frac{\mu^2}{\nu}\Sob{v_N-I_{m,k}v_N}{L^2}\Sob{\De v_N}{L^2}\notag\\
        &\leq c\frac{\mu^2}{\nu}\sum_{j=1}^{k}h^{2j}\Sob{v_N}{\dot{H}^{j}}^2+\frac{\nu}{100}\Sob{\De v_N}{L^2}^2.
    \end{align}
Hence, upon combining $I^1$ and \eqref{est:II1:uniform}, we obtain
    \begin{align}
        \frac{d}{dt}\Sob{\nabla v_N}{L^2}^2+\frac{9}{5}\nu\Sob{\De v_N}{L^2}^2+&2\gam\Sob{(-\De)^{p/2+1}v_N}{L^2}^2+\frac{9}{5}\mu\Sob{\nabla v_N}{L^2}^2\notag\\
        &\leq c\mu\nu^2 \left[\left(\frac{\nu}{\mu}\right)^2\s_1^2G^2+\til{U}_1^2\right]+ c\frac{\mu^2}{\nu}\sum_{j=1}^{k}h^{2j}\Sob{v_N}{\dot{H}^{j}}^2.\notag
    \end{align}
This completes the proof.
\end{proof}

\begin{Lem}\label{lem:Hk:gal:apriori:app}
For all $\ell\geq2$, let
     \begin{align}
        F_{\ell}&:=\left[\left(\frac{\nu}{\mu}\right)^2\s_{\ell-1}^2G^2+\til{U}_{\ell-1}^2\right]^{1/2}\label{def:Fl}\\
        \til{U}_\ell&:=\frac{\sup_{t\geq0}\Sob{I_{m,k}u(t)}{\dot{H}^\ell}}{\nu}\label{def:Ul}.
    \end{align}
Let $m\geq0$, $k\geq2$ be given such that $k\geq m+1$. There exists a universal constant $c>0$ such that 
  \begin{align}\label{est:Hk:gal:app}
     \left(\frac{\Sob{v_N(t)}{\dot{H}^k}}{\nu}\right)^2\leq& \exp\left[c\nu\left(\frac{\mu h^{1-k}}{\nu}\right)^2 t\right]\left[\left(\frac{\Sob{v_0}{\dot{H}^k}}{\nu}\right)^2+h^{2(k-1)} F_{k-1}^2\right]\notag\\
        &+c\nu\int_0^t\exp\left[c\nu\left(\frac{\mu h^{1-k}}{\nu}\right)^2(t-s)\right]\left(\frac{\Sob{\nabla v_N(s)}{L^2}}{\nu}\right)^{2(k+1)}ds,
    \end{align}
holds for all $\mu>0$, $0<h\leq 2\pi$, and $N>0$. 
\end{Lem}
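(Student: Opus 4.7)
The plan is to derive a Gronwall-type differential inequality for $\|v_N(t)\|_{\dot{H}^k}^2$. Applying $\partial^\alpha$ with $|\alpha|=k$ to the Galerkin equation \eqref{eq:nse:ng:gal}, pairing in $L^2$ with $\partial^\alpha v_N$, summing, and using that both $P_\sigma$ and $P_N$ commute with derivatives, I arrive at the energy balance
\begin{align*}
\tfrac{1}{2}\tfrac{d}{dt}\|v_N\|_{\dot{H}^k}^2 + \nu\|\nabla v_N\|_{\dot{H}^k}^2 + \gamma\|(-\Delta)^{p/2}\nabla v_N\|_{\dot{H}^k}^2 = -\,\mathrm{NLT} + \mathrm{F} - \mu\,\mathrm{Feed},
\end{align*}
with nonlinear, forcing, and feedback error terms to be controlled.

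For the nonlinear term I apply \eqref{est:buvv:1} of \cref{lem:nlt:gen} with $u=v=v_N$ and $\ell=k$; after Poincar\'e all three resulting pieces scale like $\|v_N\|_{\dot{H}^1}\|v_N\|_{\dot{H}^{k+1}}\|v_N\|_{\dot{H}^k}$. Interpolating $\|v_N\|_{\dot{H}^k}\leq c\|\nabla v_N\|_{L^2}^{1/k}\|v_N\|_{\dot{H}^{k+1}}^{(k-1)/k}$ and Young's inequality with conjugate exponents $(2k/(2k-1),2k)$ at parameter $\nu/100$ converts this into $\tfrac{\nu}{100}\|v_N\|_{\dot{H}^{k+1}}^2 + c\nu^{-(2k-1)}\|\nabla v_N\|_{L^2}^{2(k+1)}$, which produces the $(\|\nabla v_N\|_{L^2}/\nu)^{2(k+1)}$ power appearing in \eqref{est:Hk:gal:app}. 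The forcing term is treated by one integration by parts followed by Cauchy--Schwarz and Young's inequality; coupled with the estimate $\|f_\mu\|_{\dot{H}^{k-1}}^2 \leq c(\nu^4\sigma_{k-1}^2 G^2 + \mu^2\nu^2\tilde{U}_{k-1}^2) = c\mu^2\nu^2 F_{k-1}^2$ coming from \eqref{def:shape} and \cref{lem:Jmk:Hl}, this yields $\tfrac{\nu}{100}\|v_N\|_{\dot{H}^{k+1}}^2 + c\mu^2\nu F_{k-1}^2$.

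The main obstacle lies in the feedback term. Since $|\alpha|=k\geq 1$, one has $\partial^\alpha J_{m,k}=\partial^\alpha I_{m,k}$, so I split
\begin{align*}
-\mu\!\!\sum_{|\alpha|=k}\!\!\langle\partial^\alpha J_{m,k}v_N,\partial^\alpha v_N\rangle = -\mu\|v_N\|_{\dot{H}^k}^2 + \mu\!\!\sum_{|\alpha|=k}\!\!\langle\partial^\alpha(v_N-I_{m,k}v_N),\partial^\alpha v_N\rangle,
\end{align*}
and integrate one derivative by parts in the cross piece to bound it by $\mu\|v_N - I_{m,k}v_N\|_{\dot{H}^{k-1}}\|v_N\|_{\dot{H}^{k+1}}$. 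The difficulty is that the hypothesis $k\geq m+1$ forces $k-1\geq m$, so the approximation bound \eqref{est:IO:Hl} of \cref{prop:IO:global} at level $k-1$ is generally unavailable. In place of approximation I invoke the crude boundedness estimate $\|I_{m,k}v_N\|_{\dot{H}^{k-1}}\leq ch^{1-k}\|v_N\|_{\dot{H}^k}$ of \cref{cor:IO:global:bdd} (valid for $m<k-1\leq k$) together with Poincar\'e's inequality, yielding $\|v_N-I_{m,k}v_N\|_{\dot{H}^{k-1}}\leq ch^{1-k}\|v_N\|_{\dot{H}^k}$. Young's inequality then produces $\tfrac{\nu}{100}\|v_N\|_{\dot{H}^{k+1}}^2 + c(\mu^2 h^{2(1-k)}/\nu)\|v_N\|_{\dot{H}^k}^2$, the last coefficient being precisely the destabilizing rate $B_1:=c\nu(\mu h^{1-k}/\nu)^2$ appearing in \eqref{est:Hk:gal:app}.

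Assembling the three bounds, absorbing the $\|v_N\|_{\dot{H}^{k+1}}^2$ contributions into the dissipation $\nu\|\nabla v_N\|_{\dot{H}^k}^2$, and discarding the favorable $-\mu\|v_N\|_{\dot{H}^k}^2$ damping, division by $\nu^2$ leads to
\begin{align*}
\tfrac{d}{dt}\Bigl(\tfrac{\|v_N\|_{\dot{H}^k}}{\nu}\Bigr)^2 \leq B_1\Bigl(\tfrac{\|v_N\|_{\dot{H}^k}}{\nu}\Bigr)^2 + c\tfrac{\mu^2}{\nu}F_{k-1}^2 + c\nu\Bigl(\tfrac{\|\nabla v_N\|_{L^2}}{\nu}\Bigr)^{2(k+1)}.
\end{align*}
Recognizing that $c\mu^2 F_{k-1}^2/\nu = B_1\cdot h^{2(k-1)}F_{k-1}^2$, a standard Gronwall argument — in which the constant source integrates to $h^{2(k-1)}F_{k-1}^2(e^{B_1 t}-1)$ and recombines with the initial data to form the bracketed term in \eqref{est:Hk:gal:app} — delivers the claim.
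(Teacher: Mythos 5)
Your proof is correct and follows the same overall strategy as the paper's: an $\dot H^k$ energy identity for the Galerkin system, term-by-term Young estimates producing the rates $c\nu(\mu h^{1-k}/\nu)^2$, $c\mu^2\nu F_{k-1}^2$, and $c\nu^3(\Sob{\nabla v_N}{L^2}/\nu)^{2(k+1)}$, followed by Gronwall. The two places where you deviate are both harmless and yield identical outputs. For the nonlinear term the paper uses the Kato--Ponce commutator bound $|III^k|\leq c\Sob{\nabla v_N}{\dot H^k}^{(2k-1)/k}\Sob{\nabla v_N}{L^2}^{(k+1)/k}$ directly, whereas you route through \eqref{est:buvv:1} with $u=v=v_N$ plus an interpolation; the exponents after Young's inequality agree. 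For the feedback term the paper does not split off the $-\mu\Sob{v_N}{\dot H^k}^2$ damping: it bounds $|II^k|\leq c\mu\Sob{I_{m,k}v_N}{\dot H^k}\Sob{v_N}{\dot H^k}$, uses the inverse inequality \eqref{eq:IO:loc:bdd} to drop to $\dot H^m$, and then the order-$m$ approximation property; you instead isolate the damping, integrate by parts once, and use the $\dot H^k\to\dot H^{k-1}$ boundedness of \cref{cor:IO:global:bdd}. Both give the same destabilizing coefficient $c\mu^2h^{2(1-k)}/\nu$ (your route is arguably slightly cleaner since it exposes why approximation at level $k-1>m$ is not needed), and both, like the paper's, implicitly use that $I_{m,k}$ interpolates uniformly at scale $h$ — an assumption the lemma statement leaves tacit but which is forced by the appearance of a single scale $h$ in the conclusion.
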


\begin{proof}
    
Now, we estimate in $\dot{H}^k$. By taking the $L^2$--inner product of \eqref{eq:nse:ng:gal} with $(-1)^{|\al|}\bdy^{2\al} v_N$, where $|\al|=k$, integrating by parts, then summing over all $|\al|=k$, we obtain
    \begin{align}\label{eq:nse:ng:gal:Hl}
&\frac{1}2\frac{d}{dt}\Sob{v_N}{\dot{H}^k}^2+\nu\Sob{\nabla v_N}{\dot{H}^k}^2+\gam\Sob{(-\De)^{p/2}\nabla v_N}{\dot{H}^k}^2\notag\\
&=\sum_{|\al|=k}\lb \bdy^\al P_\s f_\mu,\bdy^\al v_N\rb-\mu\sum_{|\al|=k}\lb \bdy^{\al}J_{m,k}v_N,\bdy^{\al} v_N\rb-\sum_{|\al|=k}\lb \bdy^\al((v_N\cdotp\nabla)v_N),\bdy^{\al}v_N\rb\notag\\
&=I^k+II^k+III^k.
    \end{align}

We treat $I^\ell$ by integrating by parts, then applying the Cauchy-Schwarz inequality, Young's inequality, \eqref{def:grashof}, and \eqref{def:shape}, to obtain
    \begin{align}
        |I^k|&\leq c\nu^3\left[\left(\frac{\Sob{P_\s f}{\dot{H}^{k-1}}}{\nu^2}\right)^2+\left(\frac{\mu}{\nu}\right)^2\left(\frac{\Sob{I_{m,k}u}{\dot{H}^{k-1}}}{\nu}\right)^2\right]+\frac{\nu}{100}\Sob{\nabla v_N}{\dot{H}^k}^2\notag\\
        &\leq c\mu^2\nu\left[\left(\frac{\nu}{\mu}\right)^2\s_{k-1}^2G^2+\til{U}_{k-1}^2\right]+\frac{\nu}{100}\Sob{\nabla v_N}{\dot{H}^k}^2.\notag
    \end{align}

Next, we treat $II^k$. We observe that $\bdy^\al J_{m,k}\phi=\bdy^\al I_{m,k}\phi$, then apply the Cauchy-Schwarz inequality, \eqref{eq:IO:loc:bdd}, \cref{cor:IO:global}, Poincar\'e's inequality, and Young's inequality to obtain
    \begin{align}\label{est:IIl:a}
        |II^k|&\leq c\mu\Sob{I_{m,k}v_N}{\dot{H}^k}\Sob{v_N}{\dot{H}^k}\leq c\mu h^{m-k}\Sob{I_{m,k}v_N}{\dot{H}^{m}}\Sob{\nabla v_N}{\dot{H}^k}\notag\\
        &\leq c\mu h^{m-k}\left(\Sob{v_N}{\dot{H}^{m}}+\Sob{v_N-I_{m,k}v_N}{\dot{H}^{m}}\right)\Sob{\nabla v_N}{\dot{H}^k}\notag\\
        &\leq c\mu h^{m-k}\left(\Sob{v_N}{\dot{H}^{k}}+\sum_{j=1}^{k}h^{j-m}\Sob{v_N}{\dot{H}^{j}}\right)\Sob{\nabla v_N}{\dot{H}^k}\notag\\
        &\leq c\frac{\mu^2}{\nu} h^{2(1-k)}\Sob{v_N}{\dot{H}^k}^2+\frac{\nu}{100}\Sob{\nabla v_N}{\dot{H}^k}^2.
    \end{align}

Lastly, we estimate $III^k$. Indeed, observe that due to the divergence free condition, we have
    \begin{align}\notag
        \lb\bdy^\al((v_N\cdotp\nabla)v_N),\bdy^\al v_N\rb=\lb [\bdy^\al, v_N\cdotp\nabla]v_N,\bdy^\al v_N\rb.
    \end{align}
Thus, by H\"older's inequality, a classical commutator estimate (see for instance \cite{KatoPonce1988, KenigPonceVega1991}), and interpolation we have
    \begin{align}
        |III^k|&\leq\sum_{|\al|=k}|\lb[\bdy^\al,v_N\cdotp\nabla]v_N,\bdy^\al v_N\rb|\leq c\sum_{|\al|=\ell}\Sob{[\bdy^\al, v_N\cdotp\nabla]v_N}{L^{4/3}}\Sob{\bdy^\al v_N}{L^4}\notag\\
        &\leq c\sum_{|\al|=k}\Sob{\bdy^\al v_N}{L^4}^2\Sob{\nabla v_N}{L^2}\leq c\Sob{\nabla v_N}{\dot{H}^k}\Sob{v_N}{\dot{H}^k}\Sob{\nabla v_N}{L^2}\notag\\
        &\leq c\Sob{\nabla v_N}{\dot{H}^k}^{\frac{2k-1}k}\Sob{\nabla v_N}{L^2}^{\frac{k+1}{k}}\leq \frac{\nu}{100}\Sob{\nabla v_N}{\dot{H}^k}^2+c\nu^3 \left(\frac{\Sob{\nabla v_N}{L^2}}{\nu}\right)^{2(k+1)}.\notag
    \end{align}

Upon returning to \eqref{eq:nse:ng:gal:Hl} and combining $I^k$, $II^k$, and $III^k$, we have that
  \begin{align}\label{est:Hk:gal:pregronwall}
       \frac{d}{dt}\Sob{v_N}{\dot{H}^k}^2+\frac{9}5\nu\Sob{\nabla v_N}{\dot{H}^k}^2+&2\gam\Sob{(-\De)^{p/2}\nabla v_N}{\dot{H}^k}^2\notag\\
       &\leq  c\mu^2\nu F_{k-1}^2+c\nu^3 \left(\frac{\Sob{\nabla v_N}{L^2}}{\nu}\right)^{2(k+1)}+c\nu\left(\frac{\mu h^{1-k}}{\nu}\right)^2 \Sob{v_N}{\dot{H}^k}^2.
    \end{align}
Hence, by Gronwall's inequality, it follows that
    \begin{align}\label{est:Hk:gal:postgronwall}
        \Sob{v_N(t)}{\dot{H}^k}^2\leq& \exp\left[c\nu\left(\frac{\mu h^{1-k}}{\nu}\right)^2 t\right]\left(\Sob{v_0}{\dot{H}^k}^2+\nu^2h^{2(k-1)}F_{k-1}^2\right)\notag\\
        &+c\nu^3\int_0^t\exp\left[c\nu\left(\frac{\mu h^{1-k}}{\nu}\right)^2(t-s)\right]\left(\frac{\Sob{\nabla v_N(s)}{L^2}}{\nu}\right)^{2(k+1)}ds,
    \end{align}
as desired.
\end{proof}

Under certain assumptions on $I_{m,k}$, one may then identify conditions on $\mu$ so that the sequence $\{v_N\}_{N>0}$ is bounded uniformly in time in $\dot{H}^1$, independent of $N$. Provided that the initial data belongs to an absorbing ball for the dynamics, these bounds can then be expressed explicitly in terms of its radius.

\begin{Cor}\label{cor:H1:gal:apriori:app}
Suppose that $1+m\leq k\leq 2+p$. Then there exists a universal constant $c>0$, independent of $N>0$, such that if $\mu>0$ and $\{h_q\}_q$ satisfy
    \begin{align}\label{cond:H1:mu:h:partition:mult:app}
     c\sup_q\frac{\mu h_q^2}{\nu}\left[\veps_{0,1}(Q_q)+\veps_{0,2}(Q_q)+\chi_{(0,\infty)}(\gam)\left(\frac{\nu}{\gam}\right)\left(\frac{\mu h_q^2}{\nu}\right)\sum_{j=1}^{[p]}\veps_{0,j}(Q_q)^2h_q^{2(j-2)}\right]\leq\frac{1}{10{\pi_0}},
    \end{align}
then
    \begin{align}\label{est:H1:gal:partition:mult:app}
    \left(\frac{\Sob{\nabla v_N(t)}{L^2}}{\nu}\right)^2\leq e^{-\frac{\mu}{\pi_0}t}\left(\frac{\Sob{\nabla v_0}{L^2}}{\nu}\right)^2+c{\pi_0}F_1^2,\quad \text{for all}\ t\geq0.
    \end{align}
    
On the other hand, if $I_{m,k}$ interpolates uniformly at scale $h$, then there exists a universal constant $c>0$, independent of $N>0$, such that if $\mu>0$ and $0<h\leq2\pi$ satisfy
    \begin{align}\label{cond:H1:mu:h:uniform:app}
        c\frac{\mu h^2}{\nu}\left[1+\chi_{(0,\infty)}(\gam)\left(\frac{\mu }{\gam}\right)\sum_{j=1}^{[p]}h^{2(j-1)}\right]\leq\frac{1}{10}, 
    \end{align}
where $[p]$ denotes the greatest integer $\leq p$, then
    \begin{align}\label{est:H1:gal:uniform:app}
     \left(\frac{\Sob{\nabla v_N(t)}{L^2}}{\nu}\right)^2\leq e^{-\mu t}\left(\frac{\Sob{\nabla v_0}{L^2}}{\nu}\right)^2+cF_1^2,\quad \text{for all}\ t\geq0.
    \end{align}
In particular, if $v_0\in\B_1$ and $u_0\in\B_1\cap\B_k$, then 
    \begin{align}\label{est:H1:gal:final:app}
        \sup_{t\geq0}\left(\frac{\Sob{\nabla v_N(t)}{L^2}}{\nu}\right)^2\leq c\left[1+\left(\frac{\nu}{\mu}\right)^2\s_1+\sum_{j=1}^{k}(\s_{j-1}^{1/j}+G)^{2(j-1)}\right]G^2,
    \end{align}
for all $N>0$.

\end{Cor}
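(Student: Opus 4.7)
The plan is to start from the differential inequalities of \cref{lem:H1:gal:apriori:app} and use the hypotheses \eqref{cond:H1:mu:h:partition:mult:app}/\eqref{cond:H1:mu:h:uniform:app} to absorb the interpolation-error terms on the right-hand side into the dissipative terms on the left. Once absorption succeeds, one obtains a linear ODE of the form $\frac{d}{dt}\Sob{\nabla v_N}{L^2}^2+c_1\mu\Sob{\nabla v_N}{L^2}^2\leq c_2\mu\nu^2 F_1^2$, from which Gronwall's inequality directly yields the bounds \eqref{est:H1:gal:partition:mult:app} and \eqref{est:H1:gal:uniform:app}.

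First I would treat the uniformly interpolating case, starting from \eqref{est:H1:gal:uniform}. The sum $\frac{\mu^2}{\nu}\sum_{j=1}^{k}h^{2j}\Sob{v_N}{\dot{H}^{j}}^2$ on the right is split term by term: the $j=1$ contribution factors as $\mu(\mu h^2/\nu)\Sob{\nabla v_N}{L^2}^2$ and is absorbed into $\mu\Sob{\nabla v_N}{L^2}^2$; the $j=2$ contribution factors as $\nu(\mu h^2/\nu)^2\Sob{\De v_N}{L^2}^2$ and is absorbed into $\nu\Sob{\De v_N}{L^2}^2$; for $3\leq j\leq k\leq 2+p$, Poincar\'e's inequality applied to the mean-free $v_N$ gives $\Sob{v_N}{\dot{H}^{j}}^2\leq\Sob{v_N}{\dot{H}^{p+2}}^2$, and the term then factors as $\gam(\mu h^2/\nu)(\mu/\gam)h^{2(j-1)}\Sob{(-\De)^{p/2+1}v_N}{L^2}^2$, which is absorbed into the hyperdissipative term. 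The cumulative smallness demanded by these three families of absorptions is precisely \eqref{cond:H1:mu:h:uniform:app}, and Gronwall yields \eqref{est:H1:gal:uniform:app} with $c_1=1$.

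For the general case I would start from \eqref{est:H1:gal:Quniform} and perform the same three-step absorption locally on each $Q_q$: the $j$-th interpolation-error term now carries the factor $\veps_{0,j}(Q_q)^2 h_q^{2j}\Sob{v_N}{\dot{H}^{j}(\til{Q}_q)}^2$, which is rewritten against the corresponding $Q_q$-localized dissipation on the left. The subtle bookkeeping here is the passage from the $\til{Q}_q$-norms on the right to the $Q_q$-norms on the left, which is handled by \cref{lem:multiplicity} at the cost of a factor of $\pi_0$; taking $\sup_q$ in \eqref{cond:H1:mu:h:partition:mult:app} ensures that all local absorptions succeed simultaneously. This produces a Gronwall inequality with $c_1=1/\pi_0$, and integration yields \eqref{est:H1:gal:partition:mult:app}.

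Finally, for the explicit bound \eqref{est:H1:gal:final:app} under $u_0\in\B_1\cap\B_k$ and $v_0\in\B_1$, it remains to estimate $F_1^2=(\nu/\mu)^2\s_1^2 G^2+\til{U}_1^2$. The first summand is already explicit. For $\til{U}_1$, the $\QQ$-uniformity implicit in the finiteness of $\sup_q\veps_{0,j}(Q_q)$ lets me invoke \cref{cor:IO:global:bdd} to get $\Sob{I_{m,k}u}{\dot{H}^1}\leq c\Sob{u}{\dot{H}^k}$, and then \cref{thm:abs:ball:Hk} bounds the latter by $c\nu(\s_{k-1}^{1/k}+G)^{k-1}G$, which is majorized by the sum on the right-hand side of \eqref{est:H1:gal:final:app}. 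Combining this with the decaying initial-data contribution coming from $v_0\in\B_1$ yields the explicit bound. The hard part throughout is the bookkeeping of the $\veps$-constants and the $Q_q$-versus-$\til{Q}_q$ passage in the partition-multiplicity case; the nonlinear and forcing terms have already been fully handled inside \cref{lem:H1:gal:apriori:app}.
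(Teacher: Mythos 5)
Your proposal is correct and follows essentially the same route as the paper: absorb the interpolation-error terms from \cref{lem:H1:gal:apriori:app} into the $\nu$-, $\gamma$-, and $\mu$-dissipative terms using the smallness conditions \eqref{cond:H1:mu:h:partition:mult:app} and \eqref{cond:H1:mu:h:uniform:app} (with Poincar\'e to dominate the intermediate $\dot H^j$ norms by the hyperdissipation for $3\le j\le k\le 2+p$), apply Gronwall, and then bound $F_1$ via the absorbing-ball estimates and boundedness of the interpolation operator. The only cosmetic difference is that the paper routes the bound on $\til U_1$ through \cref{lem:Jmk:Hl} whereas you invoke \cref{cor:IO:global:bdd} together with \cref{thm:abs:ball:Hk} directly; both yield the same estimate.
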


\begin{proof}
By \eqref{est:H1:gal:partition:mult:app} and the Poincar\'e inequality we have
    \begin{align}
    &\frac{d}{dt}\Sob{\nabla v_N}{L^2}^2+\gam\left(\frac{18}{5{\pi_0}}-c\frac{\mu^2}{\gam\nu}\sum_{j=3}^{2+[p]}\veps_{0,j}(Q_q)^2h_q^{2j}\right)\Sob{(-\De)^{p/2+1}v_N}{L^2(Q_q)}^2\notag\\
      &\leq-\nu\sum_q\left[\frac{9}{5{\pi_0}}-c\left(\frac{\mu h_q^2}{\nu}\right)^2\veps_{0,2}(Q_q)^2\right]\Sob{\De v_N}{L^2(Q_q)}^2-\mu\sum_q\left(\frac{9}{5{\pi_0}}-c\frac{\mu h_q^2}{\nu}\veps_{0,1}(Q_q)^2\right)\Sob{\nabla v_N}{L^2(Q_q)}^2+ c\mu\nu^2 F_1^2.\notag
    \end{align}
Since \eqref{cond:H1:mu:h:partition:mult:app} holds and $k\leq 2+p$, it then follows upon shifting the index and upon applying \cref{lem:multiplicity} that
    \begin{align}
        \frac{d}{dt}\Sob{\nabla v_N}{L^2}^2\leq -\frac{3}{2{\pi_0}}\mu\Sob{\nabla v_N}{L^2}^2+c\mu\nu^2 F_1^2.\notag
    \end{align}
We deduce from Gronwall's inequality that
    \begin{align}
        \Sob{\nabla v_N(t)}{L^2}^2\leq e^{-\frac{\mu}{\pi_0}t}\Sob{\nabla v_0}{L^2}^2+cM\nu^2F_1^2,\notag
    \end{align}
as desired.

On the other hand, if $I_{m,k}$ interpolates uniformly at scale $h$, we apply \cref{lem:H1:gal:apriori:app}, so that by \eqref{est:H1:gal:uniform} we have
    \begin{align}\notag
         \frac{d}{dt}\Sob{\nabla v_N}{L^2}^2+\nu\left[\frac{9}{5}-c\left(\frac{\mu h^2}{\nu}\right)^2\right]\Sob{\De v_N}{L^2}^2+&\gam\left(2-c\frac{\mu^2}{\gam\nu}\sum_{j=3}^{2+[p]}h^{2j}\right)\Sob{(-\De)^{p/2+1}v_N}{L^2}^2\notag\\
         &\leq -\mu\left(\frac{9}{5}-c\frac{\mu h^2}{\nu}\right)\Sob{\nabla v_N}{L^2}^2+c\mu\nu^2 F_1^2,\notag
    \end{align}
Since \eqref{cond:H1:mu:h:uniform:app} holds, it follows that
    \begin{align}\notag
        \frac{d}{dt}\Sob{\nabla v_N}{L^2}^2\leq-\frac{3}2\mu\Sob{\nabla v_N}{L^2}^2+ c\mu\nu^2 F_1.
    \end{align}
An application of Gronwall's inequality, then yields
    \begin{align}\notag
        \Sob{\nabla v_N(t)}{L^2}^2\leq e^{-\mu t}\Sob{\nabla v_0}{L^2}^2+c\nu^2F_1^2,
    \end{align}
which implies \eqref{est:H1:gal:uniform:app}. We deduce \eqref{est:H1:gal:final:app} by applying \cref{lem:Jmk:Hl} and \cref{prop:wp:nse}. 
\end{proof}

Upon combining \cref{lem:Hk:gal:apriori:app} and \cref{cor:H1:gal:apriori:app}, we obtain the following corollary.

\begin{Cor}\label{cor:Hk:gal:apriori:app}
Suppose that $1+m\leq k\leq 2+p$. If $\mu$ and $\{h_q\}_q$ satisfy \eqref{cond:H1:mu:h:partition:mult:app}, then
    \begin{align}\label{est:Hk:gal:energy:app}
         \sup_{N>0}\sup_{0\leq t\leq T}\left[\Sob{v_N(t)}{\dot{H}^k}^2+\nu\int_0^t\Sob{\nabla v_N(s)}{\dot{H}^k}^2+\gam\int_0^t\Sob{(-\De)^{p/2}\nabla v_N(s)}{\dot{H}^k}^2ds\right]<\infty,
    \end{align}
holds for all $T>0$. Moreover, if $I_{m,k}$ interpolates uniformly at scale $h$ and $\mu, h$ satisfy \eqref{cond:H1:mu:h:uniform:app} in place of \eqref{cond:H1:mu:h:partition:mult:app}, then \eqref{est:Hk:gal:energy:app} still holds.
\end{Cor}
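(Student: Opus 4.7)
The plan is to combine the two preceding results, \cref{lem:Hk:gal:apriori:app} and \cref{cor:H1:gal:apriori:app}, in a straightforward bootstrap. The central observation is that the pointwise estimate \eqref{est:Hk:gal:app} furnished by \cref{lem:Hk:gal:apriori:app} is conditional on control of the quantity $\Sob{\nabla v_N(s)}{L^2}/\nu$ appearing in the time integral on its right-hand side. The corollary \cref{cor:H1:gal:apriori:app}, under either smallness assumption \eqref{cond:H1:mu:h:partition:mult:app} or \eqref{cond:H1:mu:h:uniform:app}, supplies precisely this missing input in the form of a time-uniform, $N$-uniform bound on $\Sob{\nabla v_N(t)}{L^2}$.

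First, I would invoke \cref{cor:H1:gal:apriori:app} under the hypothesis at hand. This yields the existence of a constant $M_1>0$, depending only on $\nu$, $\mu$, $F_1$, and $\pi_0$, such that $\Sob{\nabla v_N(t)}{L^2}\leq \nu M_1$ for all $t\geq 0$ and all $N>0$. Plugging this into \eqref{est:Hk:gal:app}, the time integral on the right-hand side is bounded by $M_1^{2(k+1)}$ times a purely exponential function of $t$. Since the exponential factor $\exp\bigl[c\nu(\mu h^{1-k}/\nu)^2 T\bigr]$ and the prefactor $h^{2(k-1)}F_{k-1}^2$ are finite for each fixed $T>0$, we obtain
\begin{align}
\sup_{N>0}\sup_{0\leq t\leq T}\Sob{v_N(t)}{\dot{H}^k}^2<\infty.\notag
\end{align}

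To recover the integral bounds on $\Sob{\nabla v_N}{\dot{H}^k}$ and $\Sob{(-\De)^{p/2}\nabla v_N}{\dot{H}^k}$, I would return to the differential inequality \eqref{est:Hk:gal:pregronwall} derived in the proof of \cref{lem:Hk:gal:apriori:app}, integrate it in time over $[0,T]$, and then substitute both the uniform $\dot{H}^1$ bound from \cref{cor:H1:gal:apriori:app} and the pointwise $\dot{H}^k$ bound just established. After absorbing the coefficient $\tfrac{9}{5}\nu$ (resp.\ $2\gam$) on the left-hand side, this yields an estimate of the form
\begin{align}
\nu\int_0^T\Sob{\nabla v_N(s)}{\dot{H}^k}^2\,ds+\gam\int_0^T\Sob{(-\De)^{p/2}\nabla v_N(s)}{\dot{H}^k}^2\,ds\leq C(T,v_0,\mu,\nu,\gam,F_{k-1},M_1),\notag
\end{align}
where $C$ is independent of $N$. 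Combining this with the pointwise bound yields \eqref{est:Hk:gal:energy:app}. The same argument applies verbatim under the uniform-scale hypothesis, using \eqref{est:H1:gal:uniform:app} in place of \eqref{est:H1:gal:partition:mult:app}.

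There is no genuine analytical obstacle here: both ingredients have already been proved, and the argument is essentially bookkeeping. The only subtle point is that the exponential factor $\exp\bigl[c\nu(\mu h^{1-k}/\nu)^2 T\bigr]$ in \eqref{est:Hk:gal:app} does depend on the smallest scale $h=\inf_q h_q$ and on $T$, so the resulting constants deteriorate as $h\downarrow 0$ or $T\uparrow\infty$. This is acceptable for the well-posedness statement, which only claims finiteness on each compact time window, but one should take care to state the conclusion as an $N$-uniform bound on $[0,T]$ rather than a time-uniform one. All other steps reduce to Gronwall-type manipulations that have already appeared in the proofs above.
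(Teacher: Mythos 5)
Your proposal is correct and follows the same two-step argument as the paper: substitute the $N$-uniform $\dot{H}^1$ bound from \cref{cor:H1:gal:apriori:app} into \eqref{est:Hk:gal:app} to obtain an $N$-uniform (though $T$-dependent) bound on $\Sob{v_N(t)}{\dot{H}^k}$, then return to \eqref{est:Hk:gal:pregronwall}, integrate over $[0,T]$, and combine. Your caveat distinguishing $N$-uniform from time-uniform is a fair reading of the estimate and, if anything, a slight sharpening of the paper's looser phrasing.
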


\begin{proof}
We directly apply  \eqref{est:H1:gal:partition:mult:app} or \eqref{est:H1:gal:uniform:app} in \eqref{est:Hk:gal:app}, to deduce uniform-in-time bounds on $\Sob{v_N}{\dot{H}^k}$. Upon returning to \eqref{est:Hk:gal:pregronwall}, we integrate over $[0,T]$, then apply the uniform-in-time bounds just obtained for $\Sob{v_N}{\dot{H}^k}$ to deduce \eqref{est:Hk:gal:energy:app}.
\end{proof}

Lastly, we establish bounds for the time derivative $\frac{dv_N}{dt}$.

\begin{Lem}\label{lem:gal:ddt:app}
Under the assumptions of \cref{cor:Hk:gal:apriori:app}, we have
    \begin{align}
        \sup_{N>0}\int_0^T\Sob{\frac{dv_N}{dt}(s)}{L^2}^2ds<\infty,\notag
    \end{align}
for all $T>0$.
\end{Lem}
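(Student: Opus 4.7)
The plan is to read $dv_N/dt$ directly off the Galerkin ODE \eqref{eq:nse:ng:gal}, which expresses it as a sum of five terms: a Laplacian, a hyper-Laplacian, a projected nonlinear transport term, the forcing $P_N f_\mu$, and the nudging term $-\mu P_N J_{m,k} v_N$. By Minkowski's inequality, it then suffices to estimate the $L^2_t L^2_x$--norm of each of these five pieces uniformly in $N$, using that $P_N$ and $P_\s$ are contractions on $L^2$, together with the uniform-in-$N$ bounds furnished by \cref{cor:Hk:gal:apriori:app} and the boundedness of the interpolant operator from \cref{cor:IO:global:bdd}.

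For the two linear dissipation terms, we have $\Sob{\nu \Delta v_N}{L^2} \leq \nu \Sob{v_N}{\dot H^2} \leq \nu \Sob{v_N}{\dot H^k}$, since $k \geq 2$, which is in $L^\infty(0,T)$ uniformly in $N$. The hyperdissipation piece satisfies $\Sob{\gam (-\De)^{p+1} v_N}{L^2} = \gam \Sob{v_N}{\dot H^{2(p+1)}}$, whose $L^2_t$--integrability is controlled by the last term of \eqref{est:Hk:gal:energy:app}, namely $\gam \int_0^T \Sob{v_N}{\dot H^{k+p+1}}^2 ds$, upon using the constraint $k \leq 2+p$ to match exponents. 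For the nonlinear transport, the two-dimensional Sobolev embedding $\dot H^k \hookrightarrow L^\infty$ (valid since $k \geq 2$) gives
\begin{align}\notag
\Sob{P_N P_\s (v_N \cdotp \nabla) v_N}{L^2} \leq \Sob{v_N}{L^\infty} \Sob{\nabla v_N}{L^2} \leq c\Sob{v_N}{\dot H^k} \Sob{v_N}{\dot H^1},
\end{align}
which is bounded in $L^\infty(0,T)$ by \cref{cor:Hk:gal:apriori:app}.

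For the forcing term, $\Sob{P_N f_\mu}{L^2} \leq \Sob{P_\s f}{L^2} + \mu \Sob{J_{m,k} u}{L^2}$; the first piece is time-independent, while the second is controlled pointwise in $t$ by \cref{lem:Jmk:Hl} (or directly by \cref{cor:IO:global:bdd}), exploiting the $\dot H^m$ absorbing-ball bound on $u$. The nudging feedback $\mu \Sob{P_N J_{m,k} v_N}{L^2}$ is handled identically, using \cref{cor:IO:global:bdd} together with the uniform $\dot H^k$ bound of \cref{cor:Hk:gal:apriori:app} to produce a constant independent of $N$. Squaring, summing, and integrating over $[0,T]$ then produces the claimed uniform bound.

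The main obstacle is the hyperdissipative term, where the pointwise-in-time bound on $\Sob{(-\De)^{p+1} v_N(t)}{L^2}$ demands $2(p+1)$ spatial derivatives in $L^2_t$; this sits at the threshold of what \cref{cor:Hk:gal:apriori:app} provides and is ultimately reconciled by the hypothesis $k \leq 2+p$ which is precisely calibrated to deliver the matching $L^2(0,T; \dot H^{k+p+1})$ regularity. The remaining four terms are comparatively routine, following from Sobolev embedding, boundedness of interpolants, and the uniform $L^\infty_t \dot H^k$ estimates already in hand.
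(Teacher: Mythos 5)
Your overall strategy---reading $\frac{dv_N}{dt}$ off \eqref{eq:nse:ng:gal} and estimating each resulting term in $L^2_tL^2_x$ using \cref{cor:Hk:gal:apriori:app}, \cref{lem:Jmk:Hl}, and \cref{cor:IO:global:bdd}---is exactly the paper's. Your treatment of the Laplacian, the nonlinearity (the paper uses $\Sob{v_N}{L^4}\Sob{\nabla v_N}{L^4}\leq\Sob{v_N}{L^2}\Sob{v_N}{\dot{H}^2}$ rather than $H^2\hookrightarrow L^\infty$, but both work), the forcing, and the nudging term are all sound; indeed your bound $\mu\Sob{J_{m,k}v_N}{L^2}\leq c\mu\Sob{v_N}{\dot{H}^k}$ via \eqref{eq:IO:global:bdd} and the uniform $L^\infty_t\dot{H}^k$ estimate is simpler than the paper's detour through $\dot{H}^{3+p}$ and the hyperdissipative energy term.

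The one step that does not go through as written is the hyperdissipative term. To dominate $\gam\Sob{v_N}{\dot{H}^{2(p+1)}}$ by the quantity $\Sob{(-\De)^{p/2}\nabla v_N}{\dot{H}^k}=\Sob{v_N}{\dot{H}^{k+1+p}}$ that \eqref{est:Hk:gal:energy:app} controls in $L^2_t$, you need $2p+2\leq k+1+p$, i.e.\ $k\geq p+1$; the standing hypothesis $k\leq 2+p$ points in the opposite direction and does not supply this. For $\gam>0$ and $k<p+1$ (e.g.\ $k=2$, $p=3$) the exponents do not match, and no interpolation can reach $\dot{H}^{2p+2}$ from norms that top out at $\dot{H}^{k+1+p}$. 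To be fair, the paper's own proof silently drops $\gam(-\De)^{p+1}v_N$ from its four-term decomposition, so you have surfaced a defect of the original argument rather than introduced a new one; but your assertion that $k\leq 2+p$ is ``precisely calibrated'' to close the estimate is backwards. The honest repair is either to additionally assume $k\geq p+1$, or to measure $\frac{dv_N}{dt}$ in a negative (or $\dot{H}^{k-1-p}$--type) norm---which is all the Aubin--Lions step in the proof of \cref{thm:wp:ng} actually uses, and in which the hyperdissipative term is handled by duality against the $L^2_t\dot{H}^{k+1+p}$ bound.
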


\begin{proof}
Observe that
    \begin{align}
        \Sob{\frac{dv_N}{dt}}{L^2}&\leq \Sob{v_N(t)}{\dot{H}^2}+\Sob{v_N\cdotp\nabla v_N}{L^2}+\Sob{f_\mu}{L^2}+\mu\Sob{J_{m,k}v_N}{L^2}\notag\\
        &\leq I+II+III+IV\notag
    \end{align}
We treat $I$ by Poincar\'e's inequality and \cref{cor:Hk:gal:apriori:app}. For $II$, we apply Cauchy-Schwarz inequality and interpolation to obtain
    \begin{align}
        |II|\leq \Sob{v_N}{L^4}\Sob{\nabla v_N}{L^4}\leq\Sob{v_N}{L^2}\Sob{v_N}{\dot{H}^2}.\notag
    \end{align} 
Thus, we may ultimately control $II$ with Poincar\'e's inequality and \cref{cor:Hk:gal:apriori:app}. We treat $III$ with the Cauchy-Schwarz inequality and \cref{lem:Jmk:Hl}. Lastly, we treat $IV$ with the Poincar\'e inequality, \cref{cor:IO:global:bdd}, and the fact that $1+m\leq k\leq 2+p$, so that we have
    \begin{align}
        |IV|\leq \mu\Sob{I_{m,k}v_N}{\dot{H}^1}\leq c\mu\Sob{v_N}{\dot{H}^{3+p}}\leq c\mu\Sob{(-\De)^{p/2}\nabla v_N}{\dot{H}^k}.\notag
    \end{align}
Hence, by \cref{cor:Hk:gal:apriori:app}, we conclude that
    \begin{align}
        \sup_{N>0}\int_0^T\Sob{\frac{dv_N}{dt}(s)}{L^2}^2ds<\infty,\notag
    \end{align}
for all $T>0$.
\end{proof}

Finally, we are ready to prove \cref{thm:wp:ng}.

\begin{proof}[Proof of \cref{thm:wp:ng}]
By \cref{cor:Hk:gal:apriori:app} and \cref{lem:gal:ddt:app}, we may apply the Aubin-Lions compactness theorem to extract a subsequence of $\{v_N\}_{N>0}$ such that $v_{N'}\goesto v$ in $L^2(0,T;\dot{H}^k_\s)$, where $v\in L^\infty(0,T;\dot{H}^k)\cap L^2(0,T;\dot{H}^{k+1})\cap L^2(0,T;\dot{H}^{k+1+p})$. This is sufficient to pass to the limit in \eqref{eq:nse:ng:proj:hyper} and show that $v$ indeed satisfies the equation. Moreover, by \cref{lem:gal:ddt:app}, we also have $dv/dt\in L^2(0,T;\dot{H}^{-k})$. Hence $v\in C([0,T];\dot{H}^{k})$, so that, in conjunction with $v\in L^\infty(0,T;\dot{H}^k)$, we deduce that $v\in C([0,T];\dot{H}^k)$. Uniqueness of solutions follows in the same way as in \cite{AzouaniOlsonTiti2014}; the relevant details can be inferred from the analysis performed above in establishing the synchronization of solutions (see Proof of \cref{thm:sync:Hk}).
\end{proof}

\section{Taylor interpolant}\label{sect:app:taylor}

We prove a preliminary lemma, which is a generalization of that found in \cite{JonesTiti1992a}.

\begin{Lem}\label{lem:trace}
Let $h>0$ and $d\geq2$.  Let $Q=[0,h]^d$ and $\phi\in C^k(Q)$, where $0\leq k\leq d$. For each $1\leq k\leq d-1$, there exist universal constants $b_\al>0$, for each multi-index $0\leq|\al|\leq k$, depending only on $d$, such that 
		\begin{align}\label{eq:trace:gen}
			\sup_{y\in[0,h]^{d-k}}\Sob{\phi(\cdotp, y)}{L^2([0,h]^{k})}^2\leq \sum_{0\leq|\al|\leq k} b_{|\al|} h^{-d+k+2|\al|}\Sob{\bdy^\al\phi}{L^2(Q)}^2.
		\end{align}
\end{Lem}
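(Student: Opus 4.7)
My plan is to prove this via iteration of a one-dimensional trace inequality. The key one-variable fact is that for any $g\in C^1([0,h])$ and $t_0\in[0,h]$, starting from the identity $g(t_0)^2 - g(t)^2 = 2\int_t^{t_0} g(s)g'(s)\,ds$ and averaging over $t\in[0,h]$, one obtains
\begin{align*}
g(t_0)^2 = \frac{1}{h}\int_0^h g(t)^2\,dt + \frac{2}{h}\int_0^h\int_t^{t_0} g(s)g'(s)\,ds\,dt.
\end{align*}
Bounding the double integral with Cauchy--Schwarz and applying Young's inequality $2ab\leq h^{-1}a^2 + hb^2$ with the scale $h$ yields
\begin{align*}
\sup_{t_0\in[0,h]} g(t_0)^2 \;\leq\; \tfrac{2}{h}\Sob{g}{L^2([0,h])}^2 + h\Sob{g'}{L^2([0,h])}^2.
\end{align*}

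Next, writing points of $Q$ as $(x,y)$ with $x\in[0,h]^{k}$ and $y=(y_1,\ldots,y_{d-k})\in[0,h]^{d-k}$, I would apply the one-dimensional estimate to the slice $y_1\mapsto\phi(x,y_1,y_2,\ldots)$ for each fixed $(x,y_2,\ldots,y_{d-k})$, then integrate in $x$ over $[0,h]^{k}$. Since $\sup\int \leq \int\sup$, this yields
\begin{align*}
\sup_{y_1}\Sob{\phi(\cdot,y_1,\ldots)}{L^2([0,h]^{k})}^2 \;\leq\; \tfrac{2}{h}\Sob{\phi(\cdot,y_2,\ldots)}{L^2([0,h]^{k+1})}^2 + h\Sob{\bdy_{y_1}\phi(\cdot,y_2,\ldots)}{L^2([0,h]^{k+1})}^2.
\end{align*}
I then iterate this in $y_2,\ldots,y_{d-k}$, each step introducing either a factor $2/h$ (if no derivative is taken) or a factor $h$ (if a derivative is introduced). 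After $d-k$ iterations, every coordinate has been absorbed, and the right-hand side becomes a sum indexed by multi-indices $\al$ supported on the sliced coordinates, with each term carrying the coefficient $(2/h)^{(d-k)-|\al|}h^{|\al|} = 2^{d-k-|\al|}\,h^{-d+k+2|\al|}$. This is precisely the power of $h$ stated in \eqref{eq:trace:gen}, and setting $b_{|\al|}:=2^{d-k-|\al|}$ (a constant depending only on $d$ and $k$) gives the desired inequality, with the trivial observation that extending the sum to all multi-indices with $|\al|\leq k$ only enlarges the right-hand side.

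There is no substantive obstacle --- the argument is essentially one-dimensional calculus plus Fubini. The only thing to track with care is the bookkeeping of the $(2/h)$ versus $h$ factors across the $d-k$ iterations, and the observation that the appropriate supremum-over-integral inequality commutes with $L^2$-integration in the $x$-variables. All of this is routine once the single-variable estimate is in hand.
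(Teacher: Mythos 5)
Your argument is correct and in fact cleaner than the paper's. The paper proves the codimension-one case by the fundamental theorem of calculus from an arbitrary base point $y_d$ and then runs a downward induction on the slice dimension, telescoping across the frozen coordinates and applying the inductive hypothesis to $\nabla\phi$ on intermediate slices; you instead isolate the averaged one-dimensional trace inequality $\sup_{t_0}g(t_0)^2\leq \frac{2}{h}\Sob{g}{L^2(0,h)}^2+h\Sob{g'}{L^2(0,h)}^2$ and iterate it once per frozen coordinate, which makes the bookkeeping of the $h$-powers transparent and avoids the induction entirely. The resulting exponents $h^{-d+k+2|\alpha|}$ agree with \eqref{eq:trace:gen}.

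One discrepancy deserves mention. Your iteration produces a sum over multi-indices supported on the $d-k$ frozen coordinates, with each component at most $1$ and hence $|\alpha|\leq d-k$, whereas the lemma as printed sums over $|\alpha|\leq k$. These ranges coincide only when $k\geq d-k$, so for $k<d/2$ your bound involves derivative orders that the stated right-hand side does not contain, and strictly speaking you have not proved the displayed inequality in that regime. However, the fault there lies with the statement, not with your proof: the number of derivatives controlling a trace must scale with the codimension $d-k$, and the printed version with only $|\alpha|\leq k$ fails already for $d=3$, $k=1$ (a $\log\log$-type profile concentrated near a line has bounded $L^2$ and $\dot H^1$ norms but an arbitrarily large trace on that line). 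The paper's own induction likewise accumulates one derivative per frozen coordinate and so also lands at order $d-k$; the range $|\alpha|\leq k$ in the statement is an indexing slip, harmless in the paper's applications (where the codimension never exceeds the slice dimension). It would be worth stating explicitly in your write-up that the multi-indices are supported on the frozen variables and that the correct range is $|\alpha|\leq d-k$, and noting that your argument uses the existence of the mixed first derivatives $\partial^\alpha\phi$, $\alpha\in\{0,1\}^{d-k}$, which is more than $C^k$ guarantees when $k<d-k$ but is available in every application here.
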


\begin{proof}
Let $k=d-1$. For $y_d\leq x_d$, we have
	\begin{align}\notag
		\phi(x_1,\dots, x_{d-1}, x_d)-\phi(x_1,\dots,x_{d-1}, y_d)=\int_{y_d}^{x_d}(\bdy_{d}\phi)(x_1,\dots, x_{d-1}, \tau)\ d\tau.
	\end{align}
By applying the Cauchy-Schwarz inequality, then integrating with respect to $dx_1\dots dx_d$ over $\Om$, we deduce
	\begin{align}\notag
		h\Sob{\phi(\cdotp, y_d)}{L^2([0,h]^{d-1})}^2\leq 2\Sob{\phi}{L^2(\Om)}^2+2h^2\Sob{\nabla\phi}{L^2(\Om)}^2,
	\end{align}
for all $y_d\in[0,h]$. Dividing by $h^{d-1}$ establishes \eqref{eq:trace:gen} for $k=1$. Observe that it now suffices to assume $d\geq3$.

We argue by induction. In particular, suppose that \eqref{eq:trace:gen} holds for all $\ell=k,\dots, d-1$, for some $1<k\leq d-1$.  We show that \eqref{eq:trace:gen} holds for $k-1$. Let us denote by $\hat{z}_\ell$ the projected point $(z_{\ell},\dots,z_d)$, where $1\leq \ell\leq d$.  Observe that $z=\hat{z}_1=(z_1,\dots, z_{l}, \hat{z}_{l+1})$, for all $l=1,\dots, d-1$, and  $\hat{z}_d=z_d$. Fix $x\in\Om$. Given any $y\in \Om$, we denote $\phi(\cdotp,\hat{y}_{\ell})=\phi(x_1,\dots, x_{\ell-1},\hat{y}_{\ell})$, for $\ell=1,\dots, d-1$. We have
	\begin{align}
		\phi(\cdotp,\hat{x}_{k})-\phi(\cdotp,\hat{y}_{k})=&(\phi(\cdotp, x_{d-1},x_d)-\phi(\cdotp,x_{d-1}, \hat{y}_d))+(\phi(\cdotp, x_{d-1}, \hat{y}_d)-\phi(\cdotp, y_{d-1},\hat{y}_{d}))\notag\\
			&+\dots+(\phi(\cdotp, x_{k}, \hat{y}_{k+1})-\phi(\cdotp,y_{k}, \hat{y}_{k+1})).\notag
	\end{align}
It follows that
	\begin{align}\label{mean:value}
		\phi(\cdotp,\hat{x}_{k})-\phi(\cdotp,\hat{y}_{k})=\int_{y_d}^{x_d}{(\bdy_{d}\phi)(\cdotp, \tau)}\ d\tau&+\int_{y_{d-1}}^{x_{d-1}}(\bdy_{d-1}\phi)(\cdotp, \tau, \hat{y}_d)\ d\tau\notag\\
				+\dots&+\int_{y_{k}}^{x_{k}}(\bdy_{k}\phi)(\cdotp,\tau, \hat{y}_{k+1})\ d\tau.
	\end{align}
Applying Cauchy-Schwarz yields
	\begin{align}\notag
		\abs{\phi(\cdotp, \hat{y}_{k})}^2&\leq (d-k+2)\left[\abs{\phi(x)}^2+h\int_{0}^{h}\abs{\bdy_{d}\phi(\cdotp,\tau)}^2\ d\tau+h\sum_{j=0}^{d-k-1}\int_{0}^{h}\abs{\bdy_{k+j}\phi(\cdotp,\tau,\hat{y}_{k+j+1})}^2\ d\tau\right]\notag\\
		&=(d-k+2)\left[\abs{\phi(x)}^2+h\int_{0}^{h}\abs{\bdy_{d}\phi(\cdotp,\tau)}^2\ d\tau+h\sum_{j=k}^{d-1}\int_{0}^{h}\abs{\bdy_{j}\phi(\cdotp,\tau,\hat{y}_{j+1})}^2\ d\tau\right].\notag
	\end{align}
Integrating with respect to $dx_1\dots dx_{d}$ over $\Om$ then gives  
	\begin{align}\label{prep:induct:step}
		h^{d-k+1}\Sob{\phi(\cdotp, \hat{y}_{k})}{L^2([0,\ell]^{k-1})}^2
		&\leq c\left(\Sob{\phi}{L^2(\Om)}^2+h^2\Sob{\nabla\phi}{L^2(\Om)}^2+\sum_{j=k}^{d-1}h^{d-j+3}\Sob{\nabla\phi(\cdotp,\hat{y}_{j})}{L^2([0,h]^{j-1})}^2\right).
	\end{align}
It follows from the induction hypothesis that
	\begin{align}\label{induct:step}
		\Sob{\nabla\phi(\cdotp,\hat{y}_{j})}{L^2([0,h]^{j-1})}^2\leq \sum_{0\leq|\al|\leq j-1}c_{|\al|}h^{-d+j-1+2|\al|}\Sob{\bdy^\al\nabla\phi}{L^2(\Om)}^2.
	\end{align}
Therefore, upon substituting the bounds in \eqref{induct:step} into \eqref{prep:induct:step}, then combining like terms we arrive at
	\begin{align}\notag
		h^{d-k+1}\Sob{\phi(\cdotp, \hat{y}_k)}{L^2([0,\ell]^{d-k})}^2\leq c_0\Sob{\phi}{L^2(\Om)}^2+\sum_{|\al|=0}^{d-1} b_{|\al|}h^{2+2|\al|}\Sob{\bdy^\al\nabla\phi}{L^2({\Om})}^2.
	\end{align}
The proof is complete upon dividing by $h^{d-k+1}$.
\end{proof}

\begin{Prop}
Suppose $d\geq2$. Let $Q=[0,h]^d$ and $\phi\in C^1(Q)$.  Given $y\in Q$, let $T_1\phi(\cdotp;y)$ denote first-order Taylor polynomial of $\phi$ centered at $y$. There exists an absolute constant $C>0$, independent of $y$, such that
    \begin{align}\label{est:taylor:1}
     \Sob{\phi-T_1\phi(\cdotp;y)}{L^2(Q)}^2\leq \sum_{1\leq|\al|\leq d}c_{|\al|}h^{2(|\al|+1)}\Sob{\bdy^\al\nabla\phi}{L^2(Q)}^2.
    \end{align}
\end{Prop}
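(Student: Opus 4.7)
The plan is to reduce the first-order Taylor remainder estimate to a repeated application of the zeroth-order (nodal value) estimate of Jones--Titi, which in general dimension is obtained from the telescoping identity combined with the trace bounds of \cref{lem:trace}. First I would establish (or recall) the zeroth-order analogue
\begin{align*}
\Sob{\phi-\phi(y)}{L^2(Q)}^2 \leq \sum_{1 \leq |\alpha| \leq d} c_{|\alpha|}' h^{2|\alpha|} \Sob{\bdy^\al \phi}{L^2(Q)}^2,
\end{align*}
which follows by writing $\phi(x)-\phi(y)$ as a telescoping sum of one-dimensional, axis-parallel fundamental-theorem-of-calculus integrals (as in \eqref{mean:value} but with $T_1\phi$ replaced by $\phi(y)$), applying Cauchy--Schwarz to each one-dimensional integral (yielding a factor $h$ per integration), and then invoking \cref{lem:trace} to convert the resulting integrals over lower-dimensional slices of $Q$ into $L^2(Q)$-norms of higher derivatives of $\phi$.

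Next, setting $\psi(x) := \phi(x) - T_1\phi(x;y)$, I observe that $\psi(y)=0$, so the zeroth-order estimate just established applies to $\psi$:
\begin{align*}
\Sob{\psi}{L^2(Q)}^2 \leq \sum_{1\leq |\alpha|\leq d} c_{|\alpha|}' h^{2|\alpha|} \Sob{\bdy^\al \psi}{L^2(Q)}^2.
\end{align*}
The point is that the derivatives of $\psi$ themselves simplify considerably. For $|\alpha|=1$, say $\alpha=e_i$, one has $\bdy_i\psi(x) = \bdy_i\phi(x)-\bdy_i\phi(y)$, a function vanishing at $y$, so the zeroth-order estimate may be applied \emph{once more} to $\bdy_i\phi$, yielding a bound with a factor $h^{2|\beta|}\Sob{\bdy^\beta\bdy_i\phi}{L^2(Q)}^2$ for $1\leq|\beta|\leq d$. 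For $|\alpha|\geq 2$, one has $\bdy^\al \psi = \bdy^\al \phi$ since $T_1\phi(\cdot;y)$ is affine, and writing $\bdy^\al = \bdy^{\al-e_j}\bdy_j$ for some index $j$ with $\al_j\geq 1$ exhibits $\bdy^\al\phi$ as a component of $\bdy^{\al-e_j}\nabla\phi$.

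After combining the two contributions and re-indexing $\gamma:=\beta$ in the $|\al|=1$ terms and $\gamma:= \al-e_j$ in the $|\al|\geq 2$ terms, every term on the right-hand side takes the form $h^{2(|\gamma|+1)}\Sob{\bdy^\gam\nabla\phi}{L^2(Q)}^2$ with $1\leq |\gamma|\leq d$, which is precisely \eqref{est:taylor:1}. The principal obstacle is purely technical: it lies in carrying out the general-$d$ extension of the Jones--Titi zeroth-order telescoping in such a way that the slice integrals arising from the one-dimensional Cauchy--Schwarz applications match the hypotheses of \cref{lem:trace} with the correct codimension and number of transverse derivatives. Once this is organized, the remaining combinatorial bookkeeping of indices and gathering of constants is routine, and the double application of the nodal estimate exploits the twofold vanishing of $\psi$ and $\nabla\psi$ at $y$ to gain the extra power of $h^2$ characteristic of first-order Taylor approximation.
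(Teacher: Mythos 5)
Your argument is correct, and it is organized differently from the paper's. The paper proves \eqref{est:taylor:1} by telescoping the full first-order remainder directly into iterated \emph{double} integrals of second derivatives (the identities \eqref{eq:taylor:diff:1}, \eqref{eq:taylor:diff:2a}, \eqref{eq:taylor:diff:2b}), applying Cauchy--Schwarz to each double integral as in \eqref{est:taylor:cs}, and only then invoking \cref{lem:trace} to remove the frozen $y$-coordinates from the resulting slice norms. You instead prove (or quote) the zeroth-order nodal estimate in dimension $d$ once and for all, and then apply it twice: once to $\psi=\phi-T_1\phi(\cdot;y)$, using $\psi(y)=0$, and once to each $\bdy_i\phi$, using $\nabla\psi(y)=0$; the terms with $|\al|\geq2$ are handled by the trivial identity $\bdy^\al\psi=\bdy^\al\phi$. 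I verified the bookkeeping: the second application converts each $h^2\Sob{\bdy_i\psi}{L^2(Q)}^2$ into $\sum_{1\leq|\be|\leq d}h^{2(|\be|+1)}\Sob{\bdy^\be\bdy_i\phi}{L^2(Q)}^2$, and the re-indexing $\gam=\al-e_j$ for $|\al|\geq2$ lands every term in the form $h^{2(|\gam|+1)}\Sob{\bdy^\gam\nabla\phi}{L^2(Q)}^2$ with $1\leq|\gam|\leq d$, exactly as in \eqref{est:taylor:1}. The one piece you must still supply in full is the general-$d$ zeroth-order estimate itself, but its proof is precisely the single-telescoping-plus-\cref{lem:trace} computation that the paper already carries out (each term $\int_{y_j}^{x_j}\bdy_j\phi(\bar y_{j-1},s,\hat x_{j+1})\,ds$ yields, after Cauchy--Schwarz and integration over $Q$, a factor $h^{j+1}$ times a slice norm of codimension $j-1$, to which \cref{lem:trace} applies). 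What your route buys is modularity and a cleaner exploitation of the twofold vanishing of $\psi$ at $y$, avoiding the paper's heavier double-telescoping identities; what it costs is a slightly less self-contained single computation and, in principle, larger constants from the repeated application, which is harmless here.
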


\begin{proof}
    Let $(x_1,x_2, (y_1,y_2)\in Q$.  Then observe that 
	\begin{align}
		\phi&(x_1,x_2)-T_{1}\phi(x_1,x_2)=\phi(x_1,x_2)-\phi(y_1,y_2)-\nabla\phi(y_1,y_2)\cdotp(x-y_1,y-y_2)\notag\\
			=&\left(\phi(x_1,x_2)-\phi(x,y_2)\right)+\left(\phi(x,y_2)-\phi(y_1,y_2)\right)-\nabla\phi(y_1,y_2)\cdotp(x-y_1,y-y_2)\notag\\
			=&\int_{y_2}^y(\bdy_y\phi)(x,s)\ ds+\int_{y_1}^x(\bdy_x\phi)(s,y_2)\ ds-(\bdy_x\phi)(y_1,y_2)(x-y_1)-(\bdy_y\phi)(y_1,y_2)(y-y_2)\notag\\
			=&\int_{y_2}^y(\bdy_y\phi)(x,s)-(\bdy_y\phi)(x,y_2)\ ds+\left[(\bdy_y\phi)(x,y_2)-(\bdy_y\phi)(y_1,y_2)\right](y-y_2)\notag\\
				&+\int_{y_1}^x(\bdy_x\phi)(s,y_2)-(\bdy_x\phi)(y_1,y_2)\ ds\notag\\
			=&\int_{y_2}^y\int_{y_2}^s(\bdy_2^2\phi)(x,\tau)\ d\tau ds+\int_{y_2}^y\int_{y_1}^x(\bdy_2\bdy_1\phi)(\tau,y_2)\ d\tau ds+\int_{y_1}^x\int_{y_1}^s(\bdy_1^2\phi)(\tau, y_2)\ d\tau ds.\notag
	\end{align}
	It follows from H\"older's inequality that
	\begin{align}
		&|\phi(x_1,x_2)-T_1\phi(x_1,x_2)|\notag\\
	&\leq\int_{y_2}^y\int_{y_2}^y|\bdy_2^2\phi(x,\tau)|\ d\tau ds+\int_{y_2}^y\int_{y_1}^x|\bdy_2\bdy_1\phi(\tau,y_2)|\ d\tau ds+\int_{y_1}^x\int_{y_1}^x|\bdy_1^2\phi(\tau,y_2)|\ d\tau ds\notag\\
		&\leq |y-y_2|h^{1/2}\left(\Sob{\bdy_2^2\phi(x,\cdotp)}{L^2(0,h)}+\Sob{\bdy_2\bdy_1\phi(\cdotp, y_2)}{L^2(0,h)}\right)+|x-y_1|h^{1/2}\Sob{\bdy_1^2\phi(\cdotp, y_2)}{L^2(0,h)}\notag\\
		&\leq h^{3/2}\left(\Sob{\bdy_2^2\phi(x,\cdotp)}{L^2(0,h)}+\Sob{\bdy_2\bdy_1\phi(\cdotp, y_2)}{L^2(0,h)}+\Sob{\bdy_1^2\phi(\cdotp, y_2)}{L^2(0,h)}\right)\notag
	\end{align}
The Cauchy-Schwarz inequaliy then implies that
	\begin{align}\label{t1:eqn1}
		\Sob{\phi-T_1\phi}{L^2(Q)}^2\leq ch^{4}\Sob{\bdy_2^2\phi}{L^2(Q)}^2+ch^{5}\left(\Sob{\bdy_2\bdy_1\phi(\cdotp, y_2)}{L^2(0,h)}^2+\Sob{\bdy_1^2\phi(\cdotp, y_2)}{L^2(0,h)}^2\right).
	\end{align}

By Lemma \ref{lem:trace}, we have
	\begin{align}\notag
		\Sob{\psi(\cdotp, y_2)}{L^2(0,h)}^2\leq b_0h^{-1}\Sob{\psi}{L^2(Q)}^2+b_1h\Sob{\nabla\psi}{L^2({Q})}^2.
	\end{align}
We apply this to $\psi=\bdy_2\bdy_1\phi, \bdy_1^2\phi$, so that \eqref{t1:eqn1} becomes
	\begin{align}\notag
			\Sob{\phi-T_1\phi}{L^2(Q)}^2\leq c_0h^{4}\sum_{j=1,2}\Sob{\bdy_i\nabla\phi}{L^2(Q)}^2+c_1h^6\sum_{i,j=1,2}\Sob{\bdy_i\bdy_j\nabla\phi}{L^2(Q)}^2,
	\end{align}
as desired, which establishes the case $d=2$. 

Now suppose $d\geq3$ and let $x=(x_1,\dots, x_d)$ and $y_2=(y_1,\dots, y_d)$, where $x,x\in Q$. For convenience, in addition to the notation $\hat{z}_\ell$ introduced in the proof of \cref{lem:trace}, we define $\bar{z}_\ell=(z_1,\dots, z_\ell)$, so that $z=(\bar{z}_\ell,\hat{z}_{\ell+1})$. By the fundamental theorem of calculus, we have
    \begin{align}\label{eq:taylor:diff:1}
        \phi&(x)-T_{1}\phi(x;y)=\phi(x)-\phi(y)-\nabla\phi(y)\cdotp(x-y)\notag\\
        =&(\phi(\bar{x}_1,\hat{x}_2)-\phi(\bar{y}_1,\hat{x}_2))+(\phi(\bar{y}_1,x_2,\hat{x}_3)-\phi(\bar{y}_1,y_2,\hat{x}_3))+(\phi(\bar{y}_2,x_3,\hat{x}_4)-\phi(\bar{y}_2,y_3,\hat{x}_4)\notag\\
        &+\dots+(\phi(\bar{y}_{d-2},y_{d-1},x_d)-\phi(\bar{y}_{d-2},y_{d-1},y_d))-\sum_{j=1}^d\bdy_j\phi(y)(x_j-y_j)\notag\\
        =&\int_{y_1}^{x_1}\left(\bdy_1\phi(s_1,\hat{x}_2)-\bdy_1\phi(y_1,\hat{y}_2)\right)ds_1+\int_{y_2}^{x_2}(\bdy_2\phi(y_1,s_2,\hat{x}_3)-\bdy_2\phi(y_1,y_2,\hat{y}_3))ds_2\notag\\
        &+\int_{y_3}^{x_4}(\bdy_3\phi(\bar{y}_2,s_3,\hat{x}_4)-\bdy_3\phi(\bar{y}_2,y_3,\hat{y}_4))ds_3+\dots+\int_{y_d}^{x_d}(\bdy_d\phi(\bar{y}_{d-1},s_d)-\bdy_d\phi(\bar{y}_{d-1},y_d))ds_d.
    \end{align}
Let us interpret $\bar{y}_0$ as an empty position and let $\psi(\cdotp,\hat{z}_\ell)=\psi(\bar{y}_{\ell-1},\hat{z}_\ell)$, for all $\ell=1,\dots,d$. Then for $1\leq j\leq d-2$, we have
    \begin{align}\label{eq:taylor:diff:2a}
       &\bdy_j\phi(\cdotp,s_j,\hat{x}_{j+1})-\bdy_j\phi(\cdotp,y_j,\hat{y}_{j+1})=\bdy_j\phi(\cdotp,s_j,\hat{x}_{j+1})-\bdy_j\phi(\cdotp,y_j,\hat{x}_{j+1})\notag\\
       &+(\bdy_j\phi(\cdotp,x_{j+1},\hat{x}_{j+2})-\bdy_j\phi(\cdotp,y_{j+1},\hat{x}_{j+2}))+\dots+(\bdy_j\phi(\cdotp,x_d)-\bdy_j\phi(\cdotp,y_d))\notag\\
       &=\int_{y_j}^{s_j}\bdy_j^2\phi(\cdotp,s,\hat{x}_{j+1})ds+\int_{y_{j+1}}^{x_{j+1}}\bdy_{j+1}\bdy_j\phi(\cdotp,s,\hat{x}_{j+2})ds+\dots+\int_{y_d}^{x_d}\bdy_d\bdy_j\phi(\cdotp,s)ds.
    \end{align}
Similarly, for $j=d-1,d$, we have
    \begin{align}\label{eq:taylor:diff:2b}
        \bdy_{d-1}\phi(\cdotp,s_{d-1},\hat{x}_d)-\bdy_{d-1}\phi(\cdotp,y_{d-1},\hat{y}_d)&=\int_{y_{d-1}}^{s_{d-1}}\bdy_{d-1}^2\phi(\cdotp,s,\hat{x}_d)ds+\int_{y_d}^{x_d}\bdy_d\bdy_{d-1}\phi(\cdotp,s)ds\notag\\
        \bdy_{d}\phi(\cdotp,s_d)-\bdy_{d}\phi(\cdotp,y_d)&=\int_{y_d}^{s_d}\bdy_d^2\phi(\cdotp,s)ds.
    \end{align}
Now interpret $\hat{z}_{d+1}$ as the empty position. Suppose $z_k\in\{s_k,x_k\}$ and observe that for $1\leq j\leq k\leq d$, upon applying the Cauchy-Schwarz inequality, then integrating over $Q$ with respect to $dx_1\dots dx_d$, we obtain
    \begin{align}\label{est:taylor:cs}
       \int_{Q} \left(\int_{y_j}^{x_j}\int_{y_{k}}^{z_{k}}\bdy_k\bdy_j\phi(\cdotp,s,\hat{x}_{k+1})ds\right)^2dx\leq h^{3+k}\Sob{\bdy_k\bdy_j\phi(\bar{y}_{k-1},\cdotp)}{L^2([0,h]^{d-k+1})}^2.
    \end{align}
Upon returning to \eqref{eq:taylor:diff:1}, applying \eqref{eq:taylor:diff:2a}, \eqref{eq:taylor:diff:2b}, taking the square of the result, integrating over $[0,h]^d$ with respect to $dx_1\dots dx_d$, then applying the Cauchy-Schwarz inequality and \eqref{est:taylor:cs}, we have
    \begin{align}\notag
        \Sob{\phi-T_1\phi}{L^2(Q)}^2\leq c_{(1,1)}h^4\Sob{\bdy_1^2\phi}{L^2(Q)}^2+\sum_{\substack{j\leq k\\(i,j)\neq(1,1)}}c_{(j,k)}h^{3+k}\Sob{\bdy_k\bdy_j\phi(\bar{y}_{k-1},\cdotp)}{L^2([0,h]^{d-k+1})}^2.
    \end{align}
Finally, we apply \cref{lem:trace} to obtain
    \begin{align}\notag
    \Sob{\phi-T_1\phi}{L^2(Q)}^2\leq c_{(1,1)}h^4\Sob{\bdy_1^2\phi}{L^2(Q)}^2+\sum_{\substack{j\leq k\\(i,j)\neq(1,1)}}\sum_{0\leq|\al|\leq d-k+1}c_{(j,k)}b_{|\al|}h^{4+2|\al|}\Sob{\bdy^\al\bdy_k\bdy_j\phi}{L^2(Q)}^2.
    \end{align}
Switching the order of summation completes the proof.
\end{proof}

\section{Volume Elements} \label{sect:app:vol}
We describe an approximation operator based on data given by integration over subsets of each cell. In particular we construct the operator on the unit cube $[0,1]^d$, from which its definition on affine images in the domain follows. For this particular section, we refer the interested reader to \cite{BrownThesis} for additional details. 

First we define an index set and collection of subsets of the cube
\begin{align}\notag
\mathcal{A}_m := \left\{ \al \in \{0,..,m-1\}^d \right\}  \quad \text{and} \quad E_\al := \tfrac{1}{m}\left( \al + [0,1]^d\right) = \prod_{i=1}^d \big[\tfrac{\al_i}{m},\tfrac{\al_i + 1}{m}\big]. 
\end{align}
The degrees of freedom are then given by integration on the subsets. Define the set of functionals $\Sigma = \{\s_\al\}_{\al \in \mathcal{A}_m}$, where $\s_\al: L^1_{\text{loc}}([0,1]^d) \to \R$ are given by
\begin{align}\notag
\s_\al(\phi) =  \int_{E_\al} \phi(x) dx.
\end{align}

Now let us recall that unisolvence of a function space $X$ with respect to a collection of functionals $\Sigma$ is equivalent to $\Sigma$ forming a basis for the dual space of $X$. Unisolvence will ensure that the approximation operator constructed from the functionals will act as identity on $X$. Prior to proving unisolvence of the tensor product volume element in general, we begin in one dimension.
\begin{Lem} \label{lem:vol:uni1d}
Let $m \geq 1$. Then $\mathcal{P}_{m-1,1} = \left\{ \sum_{k=0}^{m-1} p_k x^k : p_k \in \R \right\}$ is unisolvent with respect to the functionals $\Sigma = \{\s_k \}_{k=0}^{m-1}$ given by $\s_k(f) = \int_k^{k+1} f(x) dx$. In particular, there exists a unique set $\Tht = \{\tht_\ell \}_{\ell=0}^{m-1} \subseteq \mathcal{P}_{m-1,1}$ such that $\s_k(\tht_\ell) = \de_\ell^k$.
\end{Lem}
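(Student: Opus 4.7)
The plan is to exploit the fact that $\dim \mathcal{P}_{m-1,1} = m = |\Sigma|$, so unisolvence reduces to showing the linear evaluation map $\Phi:\mathcal{P}_{m-1,1}\to \R^m$ given by $\Phi(p) = (\s_0(p),\dots,\s_{m-1}(p))$ is injective; equivalently, that $\s_k(p)=0$ for all $k=0,\dots,m-1$ forces $p\equiv 0$. Once this is established, the existence and uniqueness of the dual basis $\Theta=\{\theta_\ell\}_{\ell=0}^{m-1}$ satisfying $\s_k(\theta_\ell)=\delta_\ell^k$ is an immediate consequence of standard linear algebra: the associated $m\times m$ matrix is invertible, so each row of its inverse, interpreted in coordinates relative to any fixed basis of $\mathcal{P}_{m-1,1}$ (say the monomial basis), produces the desired $\theta_\ell$.

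For the injectivity step, I would pass to the antiderivative. Given $p \in \mathcal{P}_{m-1,1}$ with $\s_k(p) = 0$ for $k=0,\dots,m-1$, let $P$ be any antiderivative of $p$, so that $P\in\mathcal{P}_{m,1}$ is a polynomial of degree at most $m$. The vanishing of $\s_k(p)$ translates, via the fundamental theorem of calculus, into
\begin{align}\notag
P(k+1)-P(k) = \int_k^{k+1}p(x)\,dx = 0,\quad k=0,\dots,m-1,
\end{align}
so $P(0)=P(1)=\cdots=P(m)$. Thus the polynomial $P-P(0)\in \mathcal{P}_{m,1}$ has $m+1$ distinct roots, which, since it has degree at most $m$, forces $P\equiv P(0)$, and hence $p=P'\equiv 0$.

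I do not expect any serious obstacle here; the main thing to get right is the bookkeeping of dimensions (noting $\dim\mathcal{P}_{m-1,1}=m$) and the careful identification of unisolvence with invertibility of the evaluation map, which is exactly the framework laid out in the standard references, e.g. \cite{BrennerScottBook}. The argument above is a one-dimensional analogue of the classical Hermite/Lagrange unisolvence argument, with the antiderivative trick replacing the direct factorization used for nodal functionals. This lemma will then serve as the base case for the tensor-product argument establishing unisolvence of $\mathcal{P}_k$ with respect to $\Pi_k$ in the full $d$-dimensional setting needed for the volume-element interpolant $V_k^Q$.
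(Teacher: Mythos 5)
Your proof is correct, but it takes a genuinely different route from the paper's. You establish injectivity of the evaluation map $p\mapsto(\s_0(p),\dots,\s_{m-1}(p))$ by the antiderivative trick: the conditions $\s_k(p)=0$ force the antiderivative $P\in\mathcal{P}_{m,1}$ to take equal values at the $m+1$ points $0,1,\dots,m$, so $P-P(0)$ has too many roots for its degree and $p=P'\equiv0$; unisolvence and the dual basis then follow from the dimension count $\dim\mathcal{P}_{m-1,1}=m=|\Sigma|$. The paper instead works directly with the matrix $M_{kj}=\s_{k-1}(x^{j-1})=\frac{k^j-(k-1)^j}{j}$, factors the rescaled matrix as a product $VT$ of a Vandermonde matrix and a unipotent bidiagonal matrix, computes $\det(M)=\frac{1}{m!}\prod_{1\leq\ell<k\leq m}(k-\ell)\neq0$, and then defines each $\tht_\ell$ explicitly via the columns of $M^{-1}$. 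Your argument is shorter and more conceptual, and it sidesteps the determinant computation entirely; the paper's argument is fully constructive, exhibiting the explicit linear system whose inverse yields the coefficients of the dual basis (which is what one would implement numerically), and the determinant formula gives quantitative nondegeneracy for free. Both are complete proofs of the lemma, and your version serves equally well as the base case for the tensor-product argument in the $d$-dimensional setting.
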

\begin{proof}
For each functional $\s_k$ and monomial $x^j$, where $0\leq k,j \leq m-1$, we have
\begin{align}\notag
\s_k(x^j) = \int_k^{k+1} x^j dx= \frac{(k+1)^{j+1}-k^{j+1}}{j+1}.\notag
\end{align}
Define a matrix $M$ by $M_{kj} = \s_{k-1}(x^{j-1})$
\begin{align}\notag
M &= \left( \frac{k^j-(k-1)^j}{j} \right)_{kj}.
\end{align}
The matrix $\h{M} =  \left( k^j-(k-1)^j \right)_{kj}$ has the same determinant as $M$ up to a factor of $j$ for each row:
\begin{align}\notag
\det(M) &= \left( \prod_{j=1}^m j \right)^{-1} \det(\h{M}) = \frac{1}{m!} \det(\h{M}).
\end{align}
$\h{M}$ is the difference of two simpler matrices
\begin{align}\notag
\h{M} =  \left( k^j \right)_{kj} - \left( (k-1)^j \right)_{kj}.
\end{align}
In particular, if we define 
\begin{align}\notag
V = \big( k^j \big)_{kj}, \quad T = \big(T_{kj} \big)_{kj},\quad \text{where}\  T_{kj} = \begin{cases}
1 & $ if $ j = k \\
-1 & $ if $ j = k-1 \\
0 & $ otherwise$.
\end{cases}
\end{align}
Then we have $\h{M} = VT$. Observe that $\det(T) = 1$. Also $V$ is a Vandermonde matrix whose determinant is thus given by
\begin{align}\notag
\det(V) = \prod_{1 \leq \ell < k \leq m} (k - \ell).
\end{align}
We therefore conclude that
\begin{align}\notag
\det(M) = \frac{1}{m!} \det(\h{M}) =\frac{1}{m!} \det(V) \det(T)
= \frac{1}{m!} \prod_{1 \leq \ell < k \leq m} (k - \ell).
\end{align}
Thus $\det(M) \neq 0$. Define a collection of polynomials $\Tht = \{\tht_k\}_{k=0}^{m-1} \subseteq \mathcal{P}_{m-1,1}$ with the coefficients of each given by the columns of the matrix $M^{-1}$. In particular, let the  coefficient of $x^j$ in $\tht_k$ be given by the $(j+1)$-th entry in the $(k+1)$-th column of $M^{-1}$
\begin{align}\notag
\tht_k(x) = \sum_{j=0}^{m-1} M^{-1}_{(j+1)(k+1)} x^j \quad \text{for}\ 0\leq k \leq m-1.
\end{align}
Since $M_{\ell j} = \s_{\ell-1}(x^{j-1})$ we have, for $0\leq \ell,k \leq m-1$,
\begin{align}\notag
\s_\ell(\tht_k) = \sum_{j=0}^{m-1}  \s_\ell(x^j) M^{-1}_{(j+1)(k+1)}= \sum_{j=0}^{m-1} M_{(\ell+1)(j+1)} M^{-1}_{(j+1)(k+1)}= \de^\ell_k.\notag
\end{align}
We conclude that there exists a basis $\Tht = \{\tht_\ell \}_{\ell =0}^{m-1}$ of $\mathcal{P}_{m-1,1}$ such that $\s_k(\tht_\ell) = \de_{k\ell}$ and therefore that the collection $\Sigma = \{\s_k \}_{k=0}^{m-1}$ forms a basis for $\mathcal{P}_{m-1,1}^*$.
\end{proof}
We proceed to the case of general dimension. 

\begin{Prop}  \label{lem:vol:uni}
Let $m,d \geq 1$. Then $\mathcal{P}_{m-1,d} = \left\{ \sum_{\al\in\mathcal{A}_m} p_\al x^\al \right\}$ is unisolvent with respect to the functionals $\Sigma = \{\s_\al \}_{\al\in\mathcal{A}_m}$ given by $\s_{\al}(\phi) = \int_{E_{\al}} \phi(x) dx$, and in particular there exists a unique set $\{\tht_{\be} \}_{\be\in\mathcal{A}_m} \subseteq \mathcal{P}_{m-1,n}$ such that $\s_{\al}(\tht_{\be}) = \de_{\al}^{\be}$.
\end{Prop}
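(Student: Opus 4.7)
The plan is to leverage the tensor product structure of both the monomial basis $\{x^\beta\}_{\beta \in \mathcal{A}_m}$ of $\mathcal{P}_{m-1,d}$ and the functionals $\{\sigma_\alpha\}_{\alpha \in \mathcal{A}_m}$. Since unisolvence of $\mathcal{P}_{m-1,d}$ with respect to $\Sigma$ is equivalent to the square matrix
\[
M = (M_{\alpha\beta})_{\alpha,\beta\in\mathcal{A}_m},\qquad M_{\alpha\beta}=\sigma_\alpha(x^\beta),
\]
being invertible, it suffices to show $\det(M)\neq 0$; the existence of the dual basis $\{\theta_\beta\}$ then follows by inverting this matrix exactly as in the proof of \cref{lem:vol:uni1d}.

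The key observation is that because $E_\alpha=\prod_{i=1}^d[\alpha_i/m,(\alpha_i+1)/m]$ and $x^\beta=\prod_{i=1}^d x_i^{\beta_i}$, Fubini's theorem yields the factorization
\[
M_{\alpha\beta}=\sigma_\alpha(x^\beta)=\prod_{i=1}^d \int_{\alpha_i/m}^{(\alpha_i+1)/m}x_i^{\beta_i}\,dx_i=\prod_{i=1}^d N_{\alpha_i\beta_i},
\]
where $N$ is the $m\times m$ matrix with entries $N_{k\ell}=\int_{k/m}^{(k+1)/m}x^\ell\,dx$ for $0\leq k,\ell\leq m-1$. Ordering $\mathcal{A}_m=\{0,\dots,m-1\}^d$ lexicographically, this exhibits $M$ as the $d$-fold Kronecker product $M=N^{\otimes d}$. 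I would then invoke the standard identity $\det(A\otimes B)=\det(A)^p\det(B)^q$ (with appropriate exponents for square factors), which gives $\det(M)=\det(N)^{dm^{d-1}}$.

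Finally, I would reduce the matrix $N$ to the matrix $M$ analyzed in \cref{lem:vol:uni1d} by the substitution $y=mx$, which shows that $N$ and the matrix from the one-dimensional lemma differ only by row and column scalings (specifically, the $\ell$-th column is divided by $m^{\ell+1}$ relative to the integer-interval case). Such scalings multiply the determinant by a nonzero constant, so $\det(N)\neq 0$ follows immediately from the explicit product formula established in the proof of \cref{lem:vol:uni1d}. Consequently $\det(M)\neq 0$, and unisolvence follows. The dual basis $\{\theta_\beta\}_{\beta\in\mathcal{A}_m}\subset\mathcal{P}_{m-1,d}$ is then obtained by taking tensor products of the one-dimensional dual basis elements, i.e.\ $\theta_\beta(x)=\prod_{i=1}^d \tht_{\beta_i}(mx_i)\cdot (\text{normalization})$, or equivalently by reading off the columns of $M^{-1}$.

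The only real content beyond bookkeeping is the Kronecker product determinant identity together with \cref{lem:vol:uni1d}; since the latter was the genuinely nontrivial computation and is already established, no serious obstacle remains. I expect the cleanest exposition to avoid explicit matrix manipulation by instead arguing directly: if $p\in\mathcal{P}_{m-1,d}$ satisfies $\sigma_\alpha(p)=0$ for all $\alpha$, expand $p(x)=\sum_{\gamma} c_\gamma \prod_i x_i^{\gamma_i}$ and iteratively apply the one-dimensional unisolvence in each variable (holding the remaining variables in their respective integration intervals) to conclude $c_\gamma=0$ for all $\gamma$, thereby establishing injectivity and hence bijectivity of the evaluation map $\mathcal{P}_{m-1,d}\to\R^{\mathcal{A}_m}$.
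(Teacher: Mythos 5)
Your proposal is correct, but your primary argument takes a different route from the paper's. You center the proof on the observation that the Gram matrix $M$ factors as a $d$-fold Kronecker product $N^{\otimes d}$ of the one-dimensional matrix, then invoke $\det(A\otimes B)=\det(A)^q\det(B)^p$ together with the column-rescaling relation $N_{k\ell}=m^{-(\ell+1)}M_{k\ell}$ to conclude $\det(M)\neq 0$ from \cref{lem:vol:uni1d}. The paper instead skips the determinant computation entirely: it rescales the one-dimensional dual polynomials to the subintervals $[\ell/m,(\ell+1)/m]$, forms the tensor product $\tht_\al(x)=\prod_i\tht_{\al_i}(x_i)$, verifies $\s_\be(\tht_\al)=\prod_i\de_{\al_i}^{\be_i}$ directly by Fubini, and then concludes by dimension counting. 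What the paper's approach buys is that it is constructive — the explicit dual basis $\Tht$ is needed to define the interpolation operator $I_m$ a few lines later — whereas your Kronecker-determinant argument establishes unisolvence as an abstract invertibility statement and recovers $\Tht$ only afterward by inverting $M$. You do in fact sketch the paper's tensor-product construction (and a third, variable-by-variable elimination argument) in your closing remarks, so the idea is present; it is simply not the load-bearing step in your write-up, whereas it is the entire content of the paper's proof.
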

\begin{proof}
Upon rescaling, we may apply \cref{lem:vol:uni1d}, to deduce that for $m\geq 0$ and $0 \leq k \leq m-1$, there exists a univariate polynomial $\tht_k \in \mathcal{P}_{m-1,1}$ such that
\begin{align}\notag
\int_{\frac{\ell}{m}}^{\frac{\ell+1}{m}} \tht_k(x) dx = \de_k^\ell.  
\end{align}
Let $n\geq 1$, and define for each $\al = (\al_1,..,\al_d) \in \mathcal{A}_m$ a polynomial $\tht_{\al} \in \mathcal{P}_{m-1,n}$
\begin{align}\notag
\tht_{\al}(x_1,..,x_d) = \prod_{i=1}^d \tht_{\al_i}(x_i).
\end{align}
We have
\begin{align}\notag
\s_{\be}(\tht_{\al}) = \int_{\frac{\be_1}{m}}^{\frac{\be_1+1}{m}} \cdots \int_{\frac{\be_d}{m}}^{\frac{\be_d+1}{m}} \prod_{i=1}^d \tht_{\al_i}(x_i) dx_n \cdots dx_1 = \prod_{i=1}^d \left( \int_{\frac{\be_i}{m}}^{\frac{\be_i+1}{m}} \tht_{\al_i}(x_i) dx_i \right)
= \prod_{i=1}^d \de_{\al_i}^{\be_i}.
\end{align}
Thus, given $m\in\N$, there exists a collection of polynomials $\Tht = \{ \tht_\al \}_{\al \in \mathcal{A}_m}$ such that
\begin{align}\notag
\s_{\be}(\tht_{\al}) = \de_{\al}^{\be}.
\end{align}
As $\text{dim}\, \mathcal{P}_{m-1,n} = \left| \Sigma \right|$, we conclude that $\Sigma$ forms a basis for $\mathcal{P}_{m-1,n}^*$ bi-orthogonal to $\Tht$ as constructed.
\end{proof}

We now define an operator $I_m : L^1_\text{loc} \to \mathcal{P}_{m-1,n}$ given by
\begin{align}\notag
I_m \phi(x) = \sum_{\al \in \mathcal{A}_m} \s_{\al}(\phi) \tht_{\al}(x).
\end{align}
By \cref{lem:vol:uni}
\begin{align}\notag
\s_{\be}(I_m \phi) = \int_{E_{\be}} I_m \phi(x) dx 
= \int_{E_{\be}} \sum_{\al\in\mathcal{A}_m} \s_{\al}(\phi) \tht_{\al}(x) dx
= \int_{E_{\be}} \s_{\be}(\phi) \tht_{\be}(x) dx 
= \s_{\be}(\phi).
\end{align}
Observe that $I_m$ is indeed a projection onto $\mathcal{P}_{m-1,n}$; for any polynomial $p \in \mathcal{P}_{m-1,n}$ and $\al \in \mathcal{A}_m$ we have $\s_{\al}(p) = \s_{\al}(I_m p)$, and therefore by \cref{lem:vol:uni} we have $p(x) = I_m p(x)$, as desired.

\newcommand{\etalchar}[1]{$^{#1}$}
\providecommand{\bysame}{\leavevmode\hbox to3em{\hrulefill}\thinspace}
\providecommand{\MR}{\relax\ifhmode\unskip\space\fi MR }
\providecommand{\MRhref}[2]{%
  \href{http://www.ams.org/mathscinet-getitem?mr=#1}{#2}
}
\providecommand{\href}[2]{#2}

\begin{multicols}{2}
\noindent Animikh Biswas\\ 
{\footnotesize
Department of Mathematics \& Statistics\\
University of Maryland--Baltimore County\\
Web: \url{https://userpages.umbc.edu/~abiswas/}\\
 Email: \url{abiswas@umbc.edu}} \\[.2cm]
\noindent

\noindent Vincent R. Martinez\\
{\footnotesize
Department of Mathematics \& Statistics\\
CUNY Hunter College\\
and\\
Department of Mathematics\\
CUNY Graduate Center\\
Web: \url{http://math.hunter.cuny.edu/vmartine/}\\
 Email: \url{vrmartinez@hunter.cuny.edu}
 }

\columnbreak 

\noindent Kenneth R. Brown\\ 
{\footnotesize
Department of Mathematics\\
University of California--Davis\\
 Email: \url{kbr@ucdavis.edu}}\\[.2cm]

\end{multicols}
\end{document}